\newtheorem{thm}{Theorem}[section]
\newtheorem{lem}[thm]{Lemma}
\newtheorem{prop}[thm]{Proposition}
\newtheorem{cor}[thm]{Corollary}
\newtheorem{assu-nota}[thm]{Assumption--Notation}
\theoremstyle{definition}
\newtheorem{rem}[thm]{Remark}
\newtheorem{ex}[thm]{Example}
\newtheorem{qst}[thm]{Question}
\newcommand{\inv}{^{-1}}
\newcommand{\unu}{^{\nu}}
\newcommand{\C}{\mathbb C}
\newcommand{\Z}{\mathbb Z}
\newcommand{\pp}{\mathbb P}
\newcommand{\bC}{\mathbb C}
\newcommand{\bG}{\mathbb G}
\newcommand{\bP}{\mathbb P}
\newcommand{\OO}{\mathcal O}
\newcommand{\cO}{\mathcal O}
\DeclareMathOperator{\Aut}{Aut}
\DeclareMathOperator{\Pic}{Pic}
\DeclareMathOperator{\NS}{NS}
\DeclareMathOperator{\Alb}{Alb}
\DeclareMathOperator{\albdim}{albdim}
\DeclareMathOperator{\Hom}{Hom}
\DeclareMathOperator{\Spec}{Spec}
\DeclareMathOperator{\im}{Im}
\DeclareMathOperator{\rk}{rk}
\DeclareMathOperator{\pr}{pr}
\DeclareMathOperator{\End}{End}
\DeclareMathOperator{\rR}{R}
\DeclareMathOperator{\coker}{coker}
\DeclareMathOperator{\res}{res}
\newcommand{\al}{\alpha}
\newcommand{\be}{\beta}
\newcommand{\ga}{\gamma}
\newcommand{\Ga}{\Gamma}
\newcommand{\si}{\sigma}
\newcommand{\cP}{\mathcal{P}}
\newcommand{\lra}{\longrightarrow}
\numberwithin{equation}{section}
\title{Brill-Noether loci for divisors    on irregular varieties}
\author{Margarida Mendes Lopes, Rita Pardini and Gian Pietro Pirola}
\thanks{
The first author is a member of the Center for Mathematical
Analysis, Geometry and Dynamical Systems (IST/UTL).  The second and the third author are members of G.N.S.A.G.A.--I.N.d.A.M. This research was partially supported by FCT (Portugal) through program POCTI/FEDER and
Project PTDC/MAT/099275/2008 and by MIUR (Italy) through  PRIN 2008 ``Geometria delle variet\`a algebriche e dei loro spazi di moduli" and PRIN 2009 ``Spazi di moduli e Teoria di Lie"}
\begin{document}
\begin{abstract}
We take up the study  of the Brill-Noether  loci $W^r(L,X):=\{\eta\in \Pic^0(X)\ |\ h^0(L\otimes\eta)\ge r+1\}$, where $X$ is a smooth projective variety of dimension $>1$, $L\in \Pic(X)$,  and $r\ge 0$ is an integer.

By studying  the infinitesimal structure of these loci and  the Petri map (defined  in analogy with the case of curves),   we obtain  lower bounds for $h^0(K_D)$, where    $D$  is a divisor that moves linearly  on a smooth projective variety $X$ of maximal Albanese dimension. In this way  we sharpen  the results of \cite{xiaoirreg} and we generalize them  to dimension $>2$.

In the $2$-dimensional case we prove an existence theorem:   we define a Brill-Noether number $\rho(C, r)$ for   a curve $C$ on  a  smooth surface $X$ of maximal Albanese dimension and we prove, under some mild additional assumptions,  that if $\rho(C, r)\ge 0$ then $W^r(C,X)$  is nonempty of dimension $\ge \rho(C,r)$. 

Inequalities for the numerical invariants of curves that do not move linearly on a surface of maximal Albanese dimension are obtained as an application of the previous results.

\noindent{\em 2000 Mathematics Subject Classification:} 14C20, 14J29, 14H51.
\end{abstract}
\maketitle
\tableofcontents

\section{Introduction}

The classical Brill-Noether  theory studies  the  loci $$W^r_d(C):=\{ L\in \Pic(C) |\deg L= d, \  h^0(L)\ge r+1\},$$ where $C$ is a smooth projective curve of genus $g\ge 2$. We refer the reader to  \cite{acgh}   for a comprehensive treatment of this beautiful topic and to \cite{acg} for further information.  We only recall here that all the theory revolves around the Brill-Noether number $\rho(g,r,d)=g-(r+1)(r+g-d)$: if $\rho(g,r,d)\ge 0$ then $W^r_d(C)$ is not empty, and if $\rho(g,r,d)> 0$ then $W^r_d(C)$ is connected of dimension $\ge \rho(g,r,d)$. In addition, if $C$ is general in moduli then  $\dim W^r_d(C)= \rho(g,r,d)$. 

Several possible generalizations of this theory have been investigated in the past years, the most studied  being the case in which divisors of fixed degree are  replaced by stable vector bundles of fixed rank and degree on the smooth curve $C$, see \cite{teixidor} for a recent survey. Generalizations to some varieties of dimension $>1$ have been considered by several people (see for instance  \cite{dl}, \cite{h}, \cite{le}).

 Moreover,  Brill-Noether type loci for higher dimensional varieties occur naturally   in the theory of deformations, Hilbert schemes,  Picard schemes and  Fourier-Mukai transforms, but  usually not as the main object of study.  
In \cite{costa-miro} the case of vector bundles on an arbitrary  smooth projective variety is considered under the assumption that all the cohomology groups of degree $>1$ vanish.
In \cite{kleiman-BN}  the Brill-Noether loci are defined in great generality for relative subschemes of any codimension of a family of projective schemes, but the theory is developed only in the case of linear series on smooth projective curves.  Any concrete theory of special divisors, like for instance existence theorems, seems impossible in such generality.

Here we take up what seems to us the most straightforward generalization of the  classical theory of linear series on curves, namely the case of line bundles on an arbitrary projective variety.   In this setup, the Brill-Noether loci are  a special instance  of the  cohomological support loci, whose study has been started in \cite{GL1}, \cite{GL2}, focusing  on the case of topologically trivial line bundles, 
and  has been extended and refined  in the context of rank one local systems (see for instance \cite{dps}).  However, our point of view and that of \cite{GL1}, \cite{GL2} are different, since we look for lower bounds on the dimension of these loci rather than for upper bounds.

Let's now summarize the  content of  the paper. Given a projective variety $X$,  a line bundle $L\in \Pic(X)$ and an integer $r\ge 0$, we set $W^r(L,X):=\{\eta\in \Pic^0(X)\ |\ h^0(L\otimes \eta)\ge r+1\}$.  First we  recall  the  natural scheme structure on $W^r(L,X)$ and, by analysing it, we    show that,  if $X$ has maximal Albanese dimension (i.e,  it has generically finite Albanese map)
and   $D$ is an effective  divisor contained in the fixed part of $|K_X|$,   then $0\in W(D,X)$ is an isolated point (Corollary \ref{cor:fixed-rigid}).

Then we focus on  two special cases: (a)  singular projective curves  and (b) smooth surfaces of maximal Albanese dimension. In case (a) we assume that $X$ is    a connected reduced projective curve and,   since  in general  $\Pic^0(X)$ is not  an abelian variety, 
we study the intersection of $W^r(L,X)$ with a compact subgroup $T\subseteq \Pic^0(X)$ of dimension $t$. 
We  define the  Brill-Noether number $\rho(t, r, d):=t-(r+1)(p_a(C)-d+r)$, where $d:=\deg L$. In this set-up  we prove the exact analogue of the existence  theorem  of Brill-Noether theory under a technical assumption on $T$ (Theorem \ref{thm:BN}, cf.  also Remark \ref{rem:ipotesi}). 
We do not consider here the very important theory of the compactifications of the Brill-Noether loci of singular curves and the theory of limit linear series as treated by  many authors (see, for instance,  {\cite{G, EH,  A, EK, C, acg}, and  see also  \cite {acg} for a complete bibliography). Our reason is that    we are interested in line bundles coming  from a smooth complete variety via restriction, and these are naturally  parametrized by a compact subgroup. 

Theorem \ref{thm:BN} is a key step in our approach to  case (b): we combine it with the generic vanishing theorem of Green  and Lazarsfeld to obtain 
 an  analogue of the existence theorem of Brill-Noether theory. Given a curve $C$ on a surface $S$ with irregularity $q>1$,  we consider the image $T$ of  the natural map $\Pic^0(S)\to \Pic^0(C)$. If this map has finite kernel  and we take $L=\OO_C(C)$, then   the Brill-Noether number  introduced  above can be written as 
 $\rho(C,r):=q-(r+1)(p_a(C)-C^2+r)$.  (For $r=0$ we write simply  $\rho(C)$).

For surfaces without irrational pencils of genus $>1$  and reduced curves $C$ all of whose   components have  positive self-intersection,   we prove that  if    $\rho(C,r)>1$,  then $W^r(C,S)$ is nonempty  of dimension $\ge\min\{q,  \rho(C, r)\}$,  and that for   $\rho(C,r)=1$ the same statement holds under an additional  assumption. In the specific case $r=0$ and under the same hypotheses we are also able to show that $C$ actually moves algebraically in a family of  dimension  $\ge \min\{q,  \rho(C\})$. For the precise statement  see Theorem  \ref{thm:Cmoves}. We remark that the assumptions on the Albanese map and on the structure of $V^1$ in these results are quite mild (see  Remark \ref{rem:mild} and  Section \ref{sec:Alb}).
 
\smallskip

A  second theme of the paper, strictly interwoven with the analysis of the Brill-Noether loci, is the study of the restriction map $r_D\colon H^0(K_X)\to H^0(K_X|_D)$, where $X$ is a smooth variety of maximal Albanese dimension and $D\subset X$ is an effective divisor whose image via the Albanese map $a\colon X\to \Alb(X)$ generates $\Alb(X)$. In Proposition \ref{prop:xiao} we establish a uniform lower bound for the rank of $r_D$  under the only assumption that $D$ is not contained in the ramification locus of the Albanese map (this is also one of the ingredients of the proof of the  Brill-Noether type  result for surfaces). 
Then we show how one can improve on this bound if the tangent space to $W^0(D, X)$ at $0$ has positive dimension (Proposition \ref{prop:xiao-petri}); in order to do this we introduce and study, in analogy with  the case of curves, the Petri map $H^0(D)\otimes H^0(K_X-D)\to H^0(K_X)$.  If $h^0(D)>1$, a lower bound for the rank of $r_D$ gives immediately a lower bound for $h^0(K_D)$ (Corollaries \ref{cor:genere} and \ref{cor:genere2}): in this way  we extend to arbitrary dimension the main result of \cite{xiaoirreg}, which treats the case in which $X$ is a surface and $D$ is a general fiber of a fibration $X\to \pp^1$.

All the previous results are applied in \S \ref{sec:applications} to the study of curves on a  surface of general type $S$  with $q(S):=h^0(\Omega^1_S)>1$  that   is  not fibered onto a curve of genus $>1$. More precisely, we give  inequalities for the numerical invariants of a  curve $C=\sum_iC_i$ of $S$  such that  $C_i^2>0$ for all $i$ and $h^0(C)=1$ (Corollary \ref{cor:ineq3}); in the special case in which $p_a(C)\le 2q(S)-2$  we obtain a stronger inequality  and a lower bound on the codimension of $W^0(C,S)$ in  $\Pic^0(S)$ (Corollary \ref{cor:q+1}). Remark that, apart from  the case when $p_a(C)=q(S)$ (classified in \cite{symm}), the question of the existence on a surface of general type $S$ of curves $C$ with $C^2>0$ and $p_a(C)\le 2q(S)-2$ is, as far as we know, completely open. Finally, we prove a result (Proposition \ref{prop:para}) that relates the fixed locus of the paracanonical system of $S$ to the ramification divisor of the Albanese map. 

In \S \ref{sec:examples} we collect several examples in order to illustrate the phenomena that occur for  Brill-Noether loci on surfaces and to  clarify to what extent the results that we obtain are optimal.  We also  pose some questions: in our opinion,  the most important of these  is whether a statement analogous to Theorem \ref{thm:Cmoves} holds if one replaces the effective divisor  $C$ by, say, an ample line bundle,  and whether a similar statement holds in arbitrary dimension.  The main difficulty here is that,  while in the case of curves  the cohomology of a family of  line bundles  of fixed degree is computed by a complex with only 2 terms, in the case of a variety of dimension $n$   one has to deal with a complex of length  $n+1$. 
 Hence the negativity of Picard sheaves, which  has been established for projective varieties of any dimension (cf.  \cite{lazarsfeld}, 6.3.C and 7.2.15), does not suffice alone to prove non emptyness results for the Brill-Noether loci.

In the surface case the method of restriction to curves and the use of the generic vanishing theorems overcome the cohomological problem. We are aware however that usually curves lying on surfaces are not general in the sense of the Brill-Noether theory, hence, although the existence theorem \ref{thm:Cmoves} is sharp, one cannot expect  that the Brill-Noether number computes precisely the dimension of the Brill-Noether locus in most cases. In fact, in view of the complexity of the geometry and  of the topology   of irregular surfaces (even the geographical problem has not been solved yet, cf. \cite{survey}), it is somewhat  surprising that  a single numerical invariant, such as the Brill-Noether number, can give a definite existence result for  continuous families of effective divisors on surfaces. 
Our methods  are also  very useful to attack  problems in  classification theory and questions about  on curves on surfaces, as we illustrated in \cite{symm} and in section \ref{sec:applications} of this paper.

\smallskip
In addition,  the use of the generic vanishing combined with the infinitesimal analysis in sections \ref{sec:BNdef} and  \ref{sec:restriction} shows  the importance  of the Petri map in the higher dimensional case. 

We are convinced  that the methods of the present paper together with the use of some fine obstructions theory as in \cite{para}  will give some striking new results on the theory of continuous families  of divisor on irregular varieties, which is ultimately  the Brill Noether theory.
\bigskip

\noindent {\em Acknowledgments:\/}  We wish to thank  Angelo Vistoli for  many useful mathematical discussions.

\bigskip

{\bf Notation and conventions.}  We work over the complex numbers. All varieties  are assumed to be complete. We do not distinguish between divisors on smooth varieties and the corresponding line bundles and we denote linear equivalence by $\equiv$. 
\smallskip

 Let $X$ be  a smooth projective  variety.  We denote as usual by $\chi(X)$ the  Euler characteristic of $\OO_X$, by $p_g(X)$ the {\em geometric genus}  $h^0(X, K_X)$ and by $q(X)$  the {\em irregularity} $h^0(X, \Omega^1_X)$. We denote by $\albdim(X)$ the dimension of  the image of  the Albanese map $a\colon X\to\Alb(X)$. As usual, a {\em fibration of $X$} is a surjective morphism with connected fibers $X\to Y$, where $Y$ is a variety with $\dim Y<\dim X$.  We say that $X$ has  
    an {\em irrational pencil of genus $g>0$} if it admits a fibration  $X\to B$  onto a smooth  curve of  genus $g>0$.
    
    If $D$ is an effective divisor of a smooth variety $X$ we denote by $p_a(D)$ the {\em  arithmetic genus}  $\chi(K_D)-1$, where  $K_D$ is the canonical divisor of $D$. In particular, if $\dim X=2$ and $D$ is a nonzero effective divisor  (a {\em curve}) then by the adjunction formula the arithmetic genus of $D$ of $S$ is  $p_a(D)=(K_SD + D^2)/2 + 1$;  the curve $D$ is said to be {\em $m$-connected\/} if, given any decomposition
$D = A+ B$ of $D$ with  $A, B>0$, one has $AB \geq m$.
\smallskip

Given a product of varieties $V_1\times...\times V_n$ we  denote by ${\pr_i}$ the projection onto the $i-$th factor. 

\section{Preliminaries on irregular varieties} \label{sec:Alb} 

We recall some  by now classical  results on irregular varieties that are used repeatedly throughout the paper.
\subsection{Albanese dimension and irregular fibrations}\label{ssec:alb}

Let $X$ be a smooth projective variety of dimension $n$. 
The {\em Albanese dimension} $\albdim(X)$ is defined as the dimension of the image of the Albanese map of $X$; in particular,  $X$ has {\em maximal Albanese dimension} if its Albanese map is generically finite onto its image and it is  of {\em Albanese general type} if in addition $q(X)>n$. For a normal  variety $Y$, we define the Albanese variety $\Alb(Y)$ and all the related notions  by considering  any smooth projective model of $Y$.

An {\em irregular fibration} $f\colon X\to Y$ is a morphism with positive dimensional connected fibers onto a normal  variety $Y$ with $\albdim Y=\dim Y>0$; the map $f$ is called an  {\em Albanese general type  fibration} if in addition $Y$ is of Albanese general type. If $\dim Y=1$, then $Y$ is a smooth curve of genus $b>0$; in that case, $f$ is called an {\em irrational pencil of genus $b$} and it is an Albanese general type fibration if and only if $b>1$. 

Notice that if $q(X)\ge n$ and $X$ has no Albanese general type fibration, then $X$ has  maximal Albanese dimension.

The so-called generalized Castelnuovo--de Franchis Theorem (see  \cite[Thm. 1.14]{catanese-irregular} and Ran \cite{ran}) shows  how  the existence of Albanese general type  fibrations  is detected by  the cohomology of $X$:
\begin{thm}[Catanese, Ran]\label{thm:cast-cat} 
The smooth projective variety $X$ has an Albanese general type  fibration $f\colon X\to Y$ with $\dim Y\le k$ if and only if there exist independent $1$-forms $\omega_0,\dots \omega_k \in H^0(\Omega^1_X)$ such that $\omega_0\wedge \omega_1\wedge \dots\wedge  \omega_k =0\in H^0(\Omega^{k+1}_X)$.
\end{thm}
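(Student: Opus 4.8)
The plan is to prove the two implications separately, using throughout that on a smooth projective (hence Kähler) variety every holomorphic $1$-form is $d$-closed. The \emph{easy} direction produces the forms from the fibration by pullback: if $f\colon X\to Y$ is an Albanese general type fibration with $m:=\dim Y\le k$, then, working on a smooth model of $Y$, the map $f^*\colon H^0(\Omega^1_Y)\into H^0(\Omega^1_X)$ is injective because $f$ is surjective with connected fibers. Since $Y$ is of Albanese general type we have $q(Y)>\dim Y=m$; choosing independent $\beta_0,\dots,\beta_m\in H^0(\Omega^1_Y)$ and setting $\omega_i:=f^*\beta_i$, the $\omega_i$ are independent and $\omega_0\wedge\dots\wedge\omega_m=f^*(\beta_0\wedge\dots\wedge\beta_m)=0$, as $\beta_0\wedge\dots\wedge\beta_m\in H^0(\Omega^{m+1}_Y)=0$. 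Since $m\le k$, this exhibits $\dim Y+1\ (\le k+1)$ independent forms with vanishing wedge, which in the extremal case $\dim Y=k$ are literally the required $\omega_0,\dots,\omega_k$.

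For the \emph{hard} direction (forms $\Rightarrow$ fibration), which is the generalized Castelnuovo--de Franchis theorem proper, I would argue by induction on $k$. In the base case $k=1$ we have independent closed forms $\omega_0,\omega_1$ with $\omega_0\wedge\omega_1=0$, so they are pointwise proportional and $g:=\omega_0/\omega_1$ defines a nonconstant meromorphic function; from $0=d\omega_0=d(g\omega_1)=dg\wedge\omega_1$ one sees that $dg$ is everywhere a multiple of $\omega_1$, so the Stein factorization $f\colon X\to B$ of $g\colon X\dashrightarrow\pp^1$ is a fibration over a smooth curve with $\omega_0,\omega_1\in f^*H^0(\Omega^1_B)$. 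Injectivity of $f^*$ forces $B$ to carry two independent $1$-forms, hence genus $\ge 2$, so $f$ is an Albanese general type fibration with $\dim B=1$.

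For the inductive step, if some proper sub-wedge of $\omega_0,\dots,\omega_k$ already vanishes we are done by induction, as it yields a fibration of base dimension $<k$. Hence we may assume no proper sub-wedge vanishes; then $\omega_0,\dots,\omega_{k-1}$ are pointwise independent at a general point while $\omega_k=\sum_{i<k}g_i\,\omega_i$ for meromorphic functions $g_i$, so the subsheaf of $\Omega^1_X$ generated by the $\omega_i$ has generic rank exactly $k$. Differentiating this relation and using $d\omega_i=0$ gives $\sum_{i<k}dg_i\wedge\omega_i=0$, whence, wedging with the appropriate $(k-1)$-fold products, each $dg_i$ lies in $\langle\omega_0,\dots,\omega_{k-1}\rangle$ at a general point. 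Since the $\omega_i$ with $i<k$ are closed, the corank-$k$ Pfaffian system $\omega_0=\dots=\omega_{k-1}=0$ is automatically Frobenius integrable, with the $g_i$ as first integrals, giving a foliation of $X$ by $(n-k)$-dimensional leaves.

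The main obstacle is the final step: promoting this analytic foliation to an \emph{algebraic} fibration $f\colon X\to Y$ onto a normal projective $Y$ with $\dim Y=k$. This is exactly where compactness of $X$ and the closedness of the $\omega_i$ are indispensable, since they control the periods so that the multivalued primitives of the $\omega_i$ assemble into a genuine map to a $k$-dimensional base rather than merely to $\Alb(X)$; concretely one may factor the Albanese map $a\colon X\to\Alb(X)$ and analyse the image $a(X)$, the hypothesis $\omega_0\wedge\dots\wedge\omega_k=0$ translating into the degeneracy of $a(X)$ that forces it to be fibered. Granting the morphism $f$, the forms $\omega_0,\dots,\omega_k$ descend to $k+1$ independent $1$-forms on $Y$, so $q(Y)\ge k+1>k=\dim Y$; thus $Y$ is of Albanese general type and $f$ is the desired fibration with $\dim Y=k\le k$, completing the induction.
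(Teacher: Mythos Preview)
The paper does not prove this theorem: it is quoted as the generalized Castelnuovo--de Franchis theorem, with references to \cite{catanese-irregular} and \cite{ran}, and is used thereafter as a black box. So there is no proof in the paper to compare your proposal against.

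As for the sketch itself: the easy direction is essentially right, but you leave a loose end when $m=\dim Y<k$. You produce $m+1$ independent pulled-back forms with vanishing $(m+1)$-fold wedge; to obtain $k+1$ independent forms with vanishing $(k+1)$-fold wedge you must adjoin $k-m$ further independent elements of $H^0(\Omega^1_X)$ and observe that the full wedge still vanishes because a sub-wedge already does. This requires $q(X)\ge k+1$, which is not automatic from the existence of the fibration alone (one only gets $q(X)\ge q(Y)\ge m+1$), so the ``only if'' direction as literally stated needs a side remark.

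For the hard direction your outline follows the classical Castelnuovo--de Franchis argument and its inductive extension, and is a reasonable roadmap. But you are candid that the crux is the passage from the integrable holomorphic foliation to an \emph{algebraic} fibration onto a normal projective base, and you do not carry it out: the phrases ``forces it to be fibered'' and ``granting the morphism $f$'' stand in for the real work. In Catanese's argument this step is handled via a careful analysis of the Albanese image and its factorizations (and in Ran's via the structure of subvarieties of abelian varieties); it is genuinely nontrivial. So what you have is a correct plan with the central difficulty identified but not resolved.
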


So in particular the existence of irrational pencils of genus $>1$ is equivalent to the existence of two independent $1$-forms $\alpha, \beta\in H^0(\Omega^1_X)$ such that $\alpha\wedge \beta=0$.

\subsection{Generic vanishing}\label{ssec:GV}
Let $X$ be a projective variety of dimension $n$ and let $L\in \Pic(X)$; 
the  generic vanishing loci, or Green-Lazarsfeld loci,   are defined as   $V^i( X):=\{\eta\ | \ h^i(\eta)>0\}\subseteq \Pic^0(X)$, $i=0,\dots n$.  They have been object of intense study since the groundbreaking papers \cite{GL1}, \cite{GL2} and their structure is very well understood.

We only summarize here for later use the properties  of $V^1(X)$, established in \cite{GL1}, \cite{GL2}, \cite{beauville-annulation}, \cite{beauville-errata} and \cite{Si}:
\begin{thm}\label{thm:GV1}
Let $X$ be a smooth projective variety; then:
\begin{enumerate} 
\item if $X$ has maximal Albanese dimension, then $V^1(X)$ is a proper closed subset of $\Pic^0(X)$ whose components are translates by torsion points of abelian subvarieties;
\item if $X$ has no irrational pencil of genus $>1$, then $\dim V^1(X)\le 1$ and  $0\in V^1(X)$ is an isolated point.
\end{enumerate}
\end{thm}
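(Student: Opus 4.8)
The plan is to deduce the two items by assembling the generic-vanishing machinery of Green--Lazarsfeld, Beauville and Simpson cited in the statement, isolating the single genuinely Hodge-theoretic point. For item (i) the only assertion that uses the Albanese hypothesis is properness: since $a\colon X\to\Alb(X)$ is generically finite onto its image, the Green--Lazarsfeld generic vanishing theorem gives $H^i(X,\eta)=0$ for general $\eta\in\Pic^0(X)$ and all $i<\dim X$; as $\dim X>1$ this yields $H^1(X,\eta)=0$ generically, so $V^1(X)\subsetneq\Pic^0(X)$, and closedness is immediate from upper semicontinuity of $h^1$ in families. The remaining structural assertion is independent of the Albanese hypothesis: that the irreducible components are translates of abelian subvarieties is the Green--Lazarsfeld subtorus theorem (obtained from the infinitesimal study of the cup-product complex $H^\bullet(\OO_X)$), and the refinement that the translating points can be chosen of finite order is Simpson's theorem (with Beauville's correction). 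I would simply quote these.

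For item (ii) I would run the structure theorem once more, noting that it holds for arbitrary smooth projective $X$. Let $Z$ be an irreducible component of $V^1(X)$ with $\dim Z\ge 1$. By the structure theorem one writes $Z=\beta+f^*\Pic^0(B)$, where $f\colon X\to B$ is a fibration onto a smooth curve $B$, the point $\beta$ is torsion, and $\dim Z=g(B)=\dim\Pic^0(B)$ (using that $f^*\colon\Pic^0(B)\to\Pic^0(X)$ is injective). If $X$ carries no irrational pencil of genus $>1$, every such $B$ has genus $\le 1$; a base $\pp^1$ contributes nothing since $\Pic^0(\pp^1)=0$, so $g(B)=1$ and hence $\dim Z=1$ for every positive-dimensional component. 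This gives $\dim V^1(X)\le 1$.

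The delicate point is that $0$ is isolated. Here I would pass to the infinitesimal picture at the origin: by the Green--Lazarsfeld description of the tangent cone, the tangent cone to $V^1(X)$ at $0$ consists of those $\omega\in H^1(X,\OO_X)$ for which the cup-product sequence $H^0(\OO_X)\xrightarrow{\cup\omega}H^1(\OO_X)\xrightarrow{\cup\omega}H^2(\OO_X)$ fails to be exact at the middle term. Transporting this defect of exactness through Hodge conjugation into a statement about holomorphic forms, a nonzero tangent vector produces two independent $\alpha,\beta\in H^0(\Omega^1_X)$ with $\alpha\wedge\beta=0$ in $H^0(\Omega^2_X)$. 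By the generalized Castelnuovo--de Franchis theorem (Theorem \ref{thm:cast-cat} with $k=1$) this is precisely the existence of an irrational pencil of genus $>1$, which the hypothesis excludes. Hence the tangent cone reduces to $\{0\}$ and $0$ is an isolated point of $V^1(X)$.

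I expect this last translation --- matching the failure of exactness of the cup-product sequence at the origin with the wedge vanishing of Castelnuovo--de Franchis --- to be the main obstacle, since it is the one step where one must genuinely manipulate the Hodge structure of $X$ rather than invoke a structure theorem; everything else is bookkeeping with the cited results, and the only care needed is to keep track of which assertions require maximal Albanese dimension (only properness in (i)) and which hold unconditionally (the subtorus and torsion statements, used in both items).
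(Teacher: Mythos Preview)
The paper does not prove Theorem~\ref{thm:GV1}: it is stated as a summary of results established in \cite{GL1}, \cite{GL2}, \cite{beauville-annulation}, \cite{beauville-errata} and \cite{Si}, with no argument given. So there is no ``paper's own proof'' to compare against; your proposal is an attempt to reconstruct the proof from the cited literature, which is a different exercise.

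That said, your reconstruction is largely sound. A couple of points of imprecision are worth flagging. First, in item~(ii) you write ``by the structure theorem one writes $Z=\beta+f^*\Pic^0(B)$ where $f\colon X\to B$ is a fibration onto a smooth curve''. This conflates two distinct results: the subtorus theorem (\cite{GL2}, \cite{Si}) gives only that $Z$ is a torsion translate of an abelian subvariety, while the identification of that subvariety with $f^*\Pic^0(B)$ for a pencil $f$ is a separate step (carried out in \cite{beauville-annulation} for surfaces and in \cite{GL2} more generally). You should cite both, or at least acknowledge that the passage from ``subtorus'' to ``comes from a curve'' is nontrivial. Second, for the isolatedness of $0$, the Green--Lazarsfeld tangent-cone theorem gives an \emph{inclusion} of the tangent cone into the non-exactness locus of the derivative complex, not an equality; fortunately inclusion is all you need, since you show the non-exactness locus is $\{0\}$. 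Your Hodge-theoretic translation of non-exactness into $\alpha\wedge\beta=0$ for independent holomorphic $1$-forms is correct, and the appeal to Castelnuovo--de Franchis (Theorem~\ref{thm:cast-cat} with $k=1$) is exactly the right conclusion.
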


\section{The Brill-Noether loci} \label{sec:BNdef} 

In this section we recall the definition of the Brill-Noether loci and some general facts on their geometry. 
The scheme structure and the tangent space to the  Brill-Noether loci have been described  in several contexts, however  for clarity's sake we  choose to spell out and prove the properties we need in the sequel.   We close the section by proving some properties of the ramification divisor of the Albanese map and of the fixed divisor of the canonical system of a variety of maximal Albanese dimension (Proposition \ref{prop:dv} and Corollary  \ref{cor:fixed-rigid}).
\smallskip

Let $X$ be a projective variety   and let $L\in \Pic(X)$. For $r\ge 0$ we define the {\em Brill-Noether locus} $$W^r(L,X):=\{\eta\in \Pic^0(X)\ |\ h^0(L\otimes\eta)\ge r+1\}.$$
 If $T\subseteq \Pic^0(X)$ is a subgroup, we set $W^r_T(L,X):=W^r(L,X)\cap T$. For $r=0$ we write $W(L,X)$ instead of $W^0(L,X)$. 

\begin{rem}
When $X$ is a smooth curve, the  Brill-Noether loci are a very  classical object of study (cf. \cite{acgh}, Ch. III, IV and  V). The definition we give here is slightly different from the classical one, which consists in fixing a class $\lambda\in \NS(X)$  and defining the Brill-Noether locus as $W^r_{\lambda}(X):=\{M\in \Pic^{\lambda}(X)\ |\ h^0(M)\ge r+1\}$, where $\Pic^{\lambda}(X)$ denotes the preimage of $\lambda$ via the natural map $\Pic(X)\to \NS(X)$. Of course,  if $\lambda$ is the class of $L$ in $\NS(X)$, then  $W^r(L,X)$ is mapped isomorphically onto $W^r_{\lambda}(X)$ by the translation by $L\in \Pic(X)$. Our choice of definition is motivated by technical reasons that become apparent, for instance, in the proof of Theorem \ref{thm:Cmoves}.
\end{rem}

By the semicontinuity theorem (cf. \cite{mumford-abelian},  p. 50) the Brill-Noether loci are closed in $\Pic^0(X)$.  In fact they are a particular  case of the cohomological support loci introduced in \cite[\S 1]{GL1}. 

 The scheme structure of $W^r(L,X)$ is described by  following  the approach of \cite{kleiman-BN}.  Our point of view differs slightly from \cite{kleiman-BN}  in that we consider line bundles rather than subschemes. 
  
We recall the following consequence of  Grothendieck duality: 
\begin{lem}\label{lem:grothendieck}
 Let $X$ be a projective variety of dimension  $n$, let $L\in \Pic(X)$  and let $\cP$ be  a  Poincar\'e line bundle on $X\times \Pic^0(X)$. Then there exists a coherent sheaf $Q$  on $\Pic^0(X)$, unique up to canonical isomorphism,  such that:
\begin{enumerate}
\item  for every coherent sheaf $M$ on $\Pic^0(X)$ there is a canonical isomorphism $\underline{\Hom}_{\OO_{\Pic^0(X)}}(Q, M)\cong {\pr_2}_*(\cP\otimes \pr_1^*L \otimes \pr_2^*M)$; \item if $X$ is Gorenstein, then $Q\cong \rR\!^n{\pr_2}_*(\pr_1^*(K_X-L)\otimes \cP^{\vee})$.
\end{enumerate}
\end{lem}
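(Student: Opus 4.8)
The plan is to produce $Q$ from the universal complex computing the direct images of $\cF:=\cP\otimes\pr_1^*L$, and then to obtain the description in (ii) by relative duality applied to that same complex. Write $p:=\pr_2\colon X\times\Pic^0(X)\to\Pic^0(X)$; this map is proper and flat (it is a base change of $X\to\Spec\C$), and $\cF$ is a line bundle, hence flat over $\Pic^0(X)$. First I would invoke the theorem on cohomology and base change (e.g. \cite[\S II.5]{mumford-abelian}): working locally on $\Pic^0(X)$, there is a bounded complex $K^\bullet=(K^0\to K^1\to\cdots\to K^n)$ of finitely generated locally free $\OO_{\Pic^0(X)}$-modules such that, for every coherent sheaf $M$ on $\Pic^0(X)$, one has functorial isomorphisms $\rR^i p_*(\cF\otimes p^*M)\cong\mathcal H^i(K^\bullet\otimes M)$. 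I would then set $Q:=\coker\bigl((d^0)^\vee\colon (K^1)^\vee\to (K^0)^\vee\bigr)$, where $d^0\colon K^0\to K^1$ is the first differential.

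For (i), apply the left-exact contravariant functor $\underline{\Hom}_{\OO_{\Pic^0(X)}}(-,M)$ to the presentation $(K^1)^\vee\to(K^0)^\vee\to Q\to 0$. Since the $K^i$ are locally free, $\underline{\Hom}((K^i)^\vee,M)\cong K^i\otimes M$, so left-exactness yields $\underline{\Hom}(Q,M)\cong\ker(K^0\otimes M\to K^1\otimes M)=\mathcal H^0(K^\bullet\otimes M)\cong p_*(\cF\otimes p^*M)$, naturally in $M$; this is the claimed isomorphism, since $\cF\otimes p^*M=\cP\otimes\pr_1^*L\otimes\pr_2^*M$. Uniqueness up to canonical isomorphism follows from Yoneda: a coherent sheaf is determined by the sheaf-Hom functor $M\mapsto\underline{\Hom}(Q,M)$ it represents (check on an affine cover), so in particular the local constructions glue to a globally well-defined $Q$, independent of the auxiliary complex $K^\bullet$.

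For (ii), assume $X$ Gorenstein, so that $p$ is a proper flat Gorenstein morphism of relative dimension $n$ with relative dualizing complex $\omega_p[n]$, where $\omega_p=\pr_1^*K_X$ is a line bundle. Apply relative Grothendieck duality
\[
\rR p_*\,\rR\underline{\Hom}(\cF,\omega_p[n])\;\cong\;\rR\underline{\Hom}_{\OO_{\Pic^0(X)}}(\rR p_*\cF,\OO_{\Pic^0(X)}).
\]
Because $\cF$ is invertible, the left-hand side equals $\rR p_*(\cP^\vee\otimes\pr_1^*(K_X-L))[n]$, whose $0$-th cohomology sheaf is $\rR^n p_*(\cP^\vee\otimes\pr_1^*(K_X-L))$. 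On the right, $\rR p_*\cF\cong K^\bullet$ and, $K^\bullet$ being a bounded complex of locally free sheaves, $\rR\underline{\Hom}(K^\bullet,\OO_{\Pic^0(X)})=(K^\bullet)^\vee$ with $(K^i)^\vee$ placed in degree $-i$; its $0$-th cohomology sheaf is $\coker((d^0)^\vee)=Q$. Comparing $\mathcal H^0$ on both sides gives $Q\cong\rR^n p_*(\pr_1^*(K_X-L)\otimes\cP^\vee)$, and incidentally re-confirms the description of $Q$ used in (i).

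I expect the main obstacle to lie in the duality step for (ii): one must pin down the relative dualizing complex of $p$ (the Gorenstein hypothesis is precisely what forces $\omega_p$ to be the line bundle $\pr_1^*K_X$ rather than a genuine complex), verify the finiteness conditions under which relative duality applies (here $\rR p_*\cF$ is perfect, being $K^\bullet$), and justify the identification $\rR\underline{\Hom}(K^\bullet,\OO_{\Pic^0(X)})=(K^\bullet)^\vee$ together with the degree bookkeeping. By contrast, part (i) and the uniqueness statement are essentially formal once the base-change complex $K^\bullet$ is in hand.
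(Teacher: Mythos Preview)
Your argument is correct. The construction of $Q$ as the cokernel of $(d^0)^\vee$ from a base-change complex is exactly the content of the EGA result the paper cites for (i), and your verification of the representing property and of uniqueness via Yoneda is clean. The one point to handle carefully is that Mumford's complex $K^\bullet$ is only produced over affine opens of $\Pic^0(X)$; you do address this, and the gluing is justified precisely by the uniqueness you prove.

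For (ii) your route differs from the paper's. You apply Grothendieck duality in the derived category to $\rR p_*\cF$, identify the right-hand side with the dual complex $(K^\bullet)^\vee$, and read off $Q$ as $\mathcal H^0$. The paper instead argues via the uniqueness in (i): it suffices to check that $\rR^n p_*(\pr_1^*(K_X-L)\otimes\cP^\vee)$ satisfies the same functorial property $\underline{\Hom}(-,M)\cong p_*(\cF\otimes p^*M)$, and this isomorphism is supplied directly by Kleiman's relative duality theorem for Cohen--Macaulay morphisms (the Gorenstein hypothesis ensures $\omega_p=\pr_1^*K_X$ is invertible). The paper's approach avoids the derived-category formalism and the degree bookkeeping you flag as the main obstacle; yours has the virtue of making the link between $Q$ and the top direct image transparent at the level of complexes, and of not requiring one to look up exactly what Kleiman's theorem says in this generality. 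Both are valid; the paper's is shorter because it leverages (i) rather than re-deriving $Q$.
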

\begin{proof} (i) Follows by applying  \cite[Thm.7.7.6] {ega32} to the morphism $\pr_2\colon X\times \Pic^0(X)\to \Pic^0(X)$ and to the sheaf $\cP\otimes \pr_1^*L$.
\medskip 

(ii) By (i) it is enough to show that for every coherent sheaf $M$ on $\Pic^0(X)$ there is a canonical isomorphism $\underline{\Hom}_{\OO_{\Pic^0(X)}}( \rR\!^n{\pr_2}_*(\pr_1^*(K_X-L)\otimes\cP^{\vee}, M)\cong {\pr_2}_*(\cP\otimes \pr_1^*L \otimes \pr_2^*M)$.  If $X$ is Gorenstein, then $\pr_1^*\!\omega_X$ is the relative dualizing sheaf for $\pr_2\colon X\times \Pic^0(X)\to \Pic^0(X)$ and, since $X$ is Cohen-Macaulay,  the required functorial isomorphism exists  by \cite[Thm. 21]{kleiman_duality}.
\end{proof}

By Lemma \ref{lem:grothendieck} (i), a point $\eta\in \Pic^0(X)$ belongs to $W^r(L,X)$ iff   $\dim_{\C}(Q\otimes \C(\eta))\ge r+1$; hence we give $W^r(L,X)$ 
 the $r$-th Fitting subscheme structure associated with the sheaf $Q$. Notice that, since $\mathcal P$ is determined up to tensoring with $\pr_2^*M$ for $M$ a line bundle on $\Pic^0(X)$, $Q$ is also determined up to tensoring with $M$; however $Q$ and $Q\otimes M$ have the same Fitting subschemes, hence our definition is well posed.

Given $\eta \in \Pic^0(X)$, we identify as usual the tangent space to $\Pic^0(X)$ at the point $\eta$ with $H^1(\OO_X)$; then, generalizing the case when $X$ is a curve, we have the following description of the  Zariski tangent space to $W^r(L,X)$. \begin{prop}\label{prop:BNtangent}
 Let $r\ge 0$ be an integer, let $X$ be a  projective variety, let $L\in \Pic(X)$ and let $\eta\in W^r(L,X)$. Then:
\begin{enumerate}
\item if $\eta\in W^{r+1}(L,X)$, then $T_{\eta}W^r(L, X)=H^1(\OO_X)$;

\item  if $\eta\not \in W^{r+1}(L,X)$, then $T_{\eta}W^r(L, X)$ is the kernel  of the linear map $H^1(\OO_X)\to \Hom(H^0(X, L+\eta), H^1(X, L+\eta))$ induced by cup product.
\end{enumerate}
\end{prop}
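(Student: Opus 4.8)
The plan is to read off the tangent space from first-order arcs through $\eta$ that factor through $W^r(L,X)$, and to convert the vanishing of the defining Fitting ideal into a rank condition for cup product. Set $h:=h^0(L+\eta)$ and identify a tangent vector with a class $\theta\in H^1(\OO_X)=T_\eta\Pic^0(X)$, i.e. with a morphism $\iota_\theta\colon\Spec\C[\epsilon]\to\Pic^0(X)$ (with $\epsilon^2=0$) supported at $\eta$. Because Fitting ideals commute with base change, $\theta\in T_\eta W^r(L,X)$ if and only if $\iota_\theta$ factors through $W^r(L,X)=V(\mathrm{Fitt}_r(Q))$, i.e. if and only if the pulled-back module $Q_\epsilon:=\iota_\theta^*Q$ satisfies $\mathrm{Fitt}_r(Q_\epsilon)=0$ in $\C[\epsilon]$. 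Writing $L_\epsilon:=(\cP\otimes\pr_1^*L)|_{X\times\Spec\C[\epsilon]}$ for the corresponding first-order deformation of $L+\eta$, I would first upgrade Lemma~\ref{lem:grothendieck}(i) to this Artinian base: applying it to $M=(\iota_\theta)_*N$ and using adjunction for the closed immersion $\iota_\theta$ gives $\Hom_{\C[\epsilon]}(Q_\epsilon,N)\cong H^0(X\times\Spec\C[\epsilon],\,L_\epsilon\otimes\pr_2^*N)$ for every $\C[\epsilon]$-module $N$. Taking $N=\C$ yields $Q_\epsilon\otimes_{\C[\epsilon]}\C\cong H^0(L+\eta)^\vee$, of dimension $h$.

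Next I would invoke the structure of finitely generated modules over the Artinian local ring $\C[\epsilon]$: $Q_\epsilon\cong\C[\epsilon]^{\oplus p}\oplus\C^{\oplus q}$ with $p+q=h$. The evident presentation matrix has a single nonzero block $\epsilon\,\mathrm{Id}_q$, so its ideal of $(h-r)$-minors, which is $\mathrm{Fitt}_r(Q_\epsilon)$, equals $(\epsilon^{\,h-r})$ when $h-r\le q$ and $0$ when $h-r>q$. As $\epsilon^2=0$, this yields the dichotomy of the statement: if $\eta\in W^{r+1}(L,X)$, i.e. $h\ge r+2$, then $\epsilon^{\,h-r}=0$ for every $\theta$ and hence $T_\eta W^r(L,X)=H^1(\OO_X)$, which is (i); if $\eta\notin W^{r+1}(L,X)$, i.e. $h=r+1$, then $\mathrm{Fitt}_r(Q_\epsilon)$ vanishes if and only if $h-r>q$, that is $q=0$, that is $p=h$.

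It remains to identify the free rank $p$ with a corank of cup product. I would compute $\dim_\C\Hom_{\C[\epsilon]}(Q_\epsilon,\C[\epsilon])$ twice: from the module decomposition it is $2p+q$ (since $\Hom_{\C[\epsilon]}(\C,\C[\epsilon])\cong\C$), while by the isomorphism above with $N=\C[\epsilon]$ it equals $\dim_\C H^0(X\times\Spec\C[\epsilon],L_\epsilon)$. The latter is read off the long exact cohomology sequence of
\[
0\to(L+\eta)\xrightarrow{\,\cdot\,\epsilon\,}L_\epsilon\to(L+\eta)\to0 ,
\]
which gives $\dim_\C H^0(L_\epsilon)=2h-\rk(\partial)$, where $\partial\colon H^0(L+\eta)\to H^1(L+\eta)$ is the connecting map. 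Comparing the two counts gives $p=h-\rk(\partial)$. Hence in case (ii) the condition $p=h$ is equivalent to $\rk(\partial)=0$, i.e. to $\partial=0$; once $\partial$ is identified with $\theta\cup(-)$, this says exactly that $\theta$ lies in the kernel of the cup-product map $H^1(\OO_X)\to\Hom(H^0(L+\eta),H^1(L+\eta))$, which is (ii).

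The step I expect to be the crux is this last identification: one must verify that the connecting homomorphism $\partial$ of the deformation sequence is genuinely cup product with $\theta$, i.e. that the deformation parameter in $\Pic^0(X)$ matches, under $T_\eta\Pic^0(X)\cong H^1(\OO_X)$, the extension class that controls $\partial$. This is where the geometry of $\Pic^0(X)$ enters, and it is what lets the Fitting-ideal computation — which a priori only sees dimensions — output the cup-product description. The remaining ingredients (base change for Fitting ideals, the adjunction that ports Lemma~\ref{lem:grothendieck}(i) to $\C[\epsilon]$, and the module bookkeeping over the dual numbers) are routine.
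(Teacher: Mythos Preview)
Your proposal is correct and follows essentially the same route as the paper: both arguments pull back $Q$ along the arc $\Spec\C[\epsilon]\to\Pic^0(X)$, decompose the resulting $\C[\epsilon]$-module as $\C[\epsilon]^{p}\oplus\C^{q}$, compute the Fitting ideal from the obvious presentation, and then translate the condition $q=0$ into vanishing of the connecting map via Lemma~\ref{lem:grothendieck}(i) and the short exact sequence $0\to L+\eta\to L_\epsilon\to L+\eta\to 0$. The only cosmetic difference is that the paper phrases the last step as surjectivity of $\Hom_{\C[\epsilon]}(Q_\epsilon,\C[\epsilon])\to\Hom_{\C[\epsilon]}(Q_\epsilon,\C)$ (equivalently, of $H^0(L_\epsilon)\to H^0(L+\eta)$), whereas you reach the same conclusion by the dimension count $2p+q=2h-\rk(\partial)$.
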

\begin{proof} Let $Q$ be the coherent sheaf of Lemma \ref{lem:grothendieck}. As usual, we denote by   $\C[\epsilon]$  the algebra of dual numbers. We regard an element $v\in H^1(\OO_X)$ as a morphism $v\colon \Spec \C[\epsilon]\to \Pic^0(X)$ mapping the closed point of $\Spec\C[\epsilon]$  to $\eta$  and we denote by $Q_v$ the pull back of $Q$ via $v$. 
By the functorial properties  of Fitting ideals, $v$ is in the tangent space to $W^r(L,X)$ iff the $r$-th Fitting ideal of $Q_v$ as a $\C[\epsilon]$-module  vanishes.  Set  $m:=h^0(X,L+\eta)$.  By the definition of $Q$ (Lemma \ref{lem:grothendieck}), there is an isomorphism   $\Hom_{\C[\epsilon]}(Q_v, \C)\cong H^0(X,L+\eta)$;    it is not hard to show  that  there is an isomorphism  $Q_v\cong \C[\epsilon]^{m-l}\oplus \C^{l}$ for some $0\le l\le m$. Hence $Q_v$ has a presentation by a $m\times l$ matrix with $\epsilon$ on the diagonal and $0$ elsewhere. A direct computation shows that the $r$-th Fitting ideal is $0$ iff $m>r+1$ or $m=r+1$ and $l=0$. In particular, this proves claim (i) and we may assume from now on that $m=r+1$.

Denote by $L_v$ the pull back of $\cP\otimes \pr_1^*L$ to $X_{\epsilon}:=X\times_{\Spec \C} \Spec \C[\epsilon]$. The condition $l=0$  is equivalent to  the surjectivity of the map $\Hom_{\C[\epsilon]}(Q_v, \C[\epsilon])\to \Hom_{\C[\epsilon]}(Q_v,\C)$. By Lemma \ref{lem:grothendieck},   we have canonical isomorphisms:  $$ \Hom_{\C[\epsilon]}(Q_v, \C[\epsilon])\cong H^0(X_{\epsilon}, L_v), \quad \Hom_{\C[\epsilon]}(Q_v, \C)\cong H^0(X, L+\eta)$$ So $v$ is tangent to $W^r(L,X)$ at $\eta$ iff the restriction map $H^0(X_{\epsilon}, L_v)\to H^0(X,L+\eta)$ is surjective. On the other hand, this map is part of the long cohomology sequence associated with the extension
$$0\to L+\eta \overset{\epsilon}{\to}L_v\to L+\eta\to 0,$$
hence it is surjective iff the coboundary map $H^0( X, L+\eta)\to H^1(L+\eta)$ vanishes. Since it is well known that the latter map is given by  cupping with $v$, statement (ii) follows.
\end{proof}

 As an application  of Proposition \ref{prop:BNtangent} we prove the following:
\begin{prop}\label{prop:dv}
 Let $X$ be a smooth projective variety such that $n:=\dim X=\albdim X$ and let $R$ be the  ramification divisor  of the Albanese map of $X$; if $0<Z\le R$ is a divisor and $s\in H^0(\OO_X(Z))$ is  a section that defines $Z$, then:
\begin{enumerate}
\item the map $H^1(\OO_X)\overset{\cup s}{\longrightarrow}H^1(\OO_X(Z))$ is injective; 
\item if $h^0(Z)=1$, then $H^0(Z|_Z)=0$ and  $0\in W(Z,X)$ is an isolated point (with  reduced structure). 
\end{enumerate}
\end{prop}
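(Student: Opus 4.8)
Let $X$ be a smooth projective variety such that $n:=\dim X=\albdim X$ and let $R$ be the ramification divisor of the Albanese map. If $0<Z\le R$ is a divisor defined by a section $s\in H^0(\OO_X(Z))$, then (i) $H^1(\OO_X)\overset{\cup s}{\to}H^1(\OO_X(Z))$ is injective, and (ii) if $h^0(Z)=1$, then $H^0(Z|_Z)=0$ and $0\in W(Z,X)$ is an isolated reduced point.

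Let me sketch how I would prove this.

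\emph{Strategy.} Both assertions of (ii) follow formally once (i) is known, so the plan is to isolate (i) as the essential point. Assuming (i), I would run the long exact cohomology sequence of
\[
0\to \OO_X\overset{s}{\to}\OO_X(Z)\to \OO_X(Z)|_Z\to 0 .
\]
Since $h^0(Z)=1$, the map $H^0(\OO_X)\to H^0(\OO_X(Z))$ is an isomorphism, so the connecting homomorphism $\delta\colon H^0(\OO_X(Z)|_Z)\to H^1(\OO_X)$ is injective with image equal to $\ker(\cup s)$; by (i) this kernel is $0$, giving $H^0(Z|_Z)=0$. For the last assertion, $h^0(Z)=1$ means $0\in W(Z,X)$ but $0\notin W^1(Z,X)$, so Proposition \ref{prop:BNtangent}(ii) identifies $T_0W(Z,X)$ with the kernel of the cup-product map $H^1(\OO_X)\to \Hom(H^0(Z),H^1(Z))$; as $H^0(Z)=\C s$ this is once more $\ker(\cup s)=0$, so $0$ is an isolated reduced point. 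Everything thus rests on (i).

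\emph{Reduction of (i).} The geometric input I would use is that $R$ is the divisorial base locus of the subsystem of $|K_X|$ spanned by the $n$-forms $\omega_I:=\omega_{i_1}\wedge\dots\wedge\omega_{i_n}$, where $\omega_1,\dots,\omega_q$ is a basis of $H^0(\Omega^1_X)$; recall that, $X$ having maximal Albanese dimension, every $1$-form is pulled back from $\Alb(X)$ and generic finiteness of the Albanese map guarantees that some $\omega_I$ is nonzero. Hence $\operatorname{div}(\omega_I)\ge R\ge Z$, and I can write $\omega_I=s\cdot\rho_I$ with $\rho_I\in H^0(K_X-Z)$. Multiplication by $\rho_I$ factors cup product with $\omega_I$ as $H^1(\OO_X)\overset{\cup s}{\to}H^1(\OO_X(Z))\overset{\cdot\,\rho_I}{\to}H^1(K_X)$, so that
\[
\ker(\cup s)\subseteq \bigcap_I\ker\bigl(\cup\,\omega_I\colon H^1(\OO_X)\to H^1(K_X)\bigr),
\]
and it suffices to show this intersection vanishes.

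\emph{The main point.} Write $v=\bar\omega$ with $\omega\in H^0(\Omega^1_X)$ under $H^1(\OO_X)=\overline{H^0(\Omega^1_X)}$, and suppose $v\cup\omega_I=0$ for all $I$. Testing the class $v\cup\omega_I\in H^1(K_X)$ against $\bar\beta$, for $\beta\in H^0(\Omega^{n-1}_X)$, under the Serre duality pairing $H^1(K_X)\times H^{n-1}(\OO_X)\to\C$ yields a positive multiple of the Hodge inner product $\langle \omega_I,\ \omega\wedge\beta\rangle$ on $H^0(K_X)$. Choosing $\beta$ among the wedges of $n-1$ of the $\omega_i$ forces $\omega\wedge\beta$ into the span $W_n$ of the $\omega_I$; so $\omega\wedge\beta$ is orthogonal to $W_n$ while lying in $W_n$, and positive-definiteness gives $\omega\wedge\beta=0$ in $H^0(K_X)$ for all such $\beta$, i.e. $\omega\wedge\alpha=0$ for every $\alpha$ in the image of $\wedge^{n-1}H^0(\Omega^1_X)$. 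If $\omega\ne0$ I would then argue pointwise: at a general $x\in X$ one has $\omega(x)\ne0$ and, by maximal Albanese dimension, the evaluation $H^0(\Omega^1_X)\to \Omega^1_X\otimes k(x)\cong\C^n$ is surjective; completing $\omega(x)$ to a basis by $\eta_1(x),\dots,\eta_{n-1}(x)$ with $\eta_j\in H^0(\Omega^1_X)$ makes $\omega\wedge\eta_1\wedge\dots\wedge\eta_{n-1}$ nonzero at $x$, contradicting the vanishing just obtained. Hence $\omega=0$ and $v=0$, proving (i).

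\emph{Expected difficulty.} The formal steps — reducing (ii) to (i) and factoring the cup products — are routine; the real content is the vanishing of $\bigcap_I\ker(\cup\,\omega_I)$. The two delicate ingredients I expect to have to handle with care are the description of $R$ as the base divisor of $\langle\omega_I\rangle$ (which is exactly what produces the sections $\rho_I$), and the translation of the cohomological condition $v\cup\omega_I=0$ into the positive-definite Hodge pairing on $H^0(K_X)$, since it is this translation that converts an \emph{a priori} cohomological statement into the pointwise-checkable identity $\omega\wedge\bigl(\wedge^{n-1}H^0(\Omega^1_X)\bigr)=0$. Getting the compatibilities between the various cup and wedge pairings exactly right is the main thing to be careful about.
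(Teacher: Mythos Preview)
Your argument is correct and follows essentially the same route as the paper: reduce (ii) to (i) via the restriction sequence and Proposition~\ref{prop:BNtangent}, then prove (i) by observing that $\ker(\cup s)\subseteq\bigcap_I\ker(\cup\,\omega_I)$ and using Hodge theory to show the latter is zero. The only cosmetic difference is that where you phrase the key step as orthogonality in the Hodge inner product on $H^0(K_X)$ (your ``positive multiple'' should really just be ``nonzero multiple'', but only the vanishing is used), the paper writes it out as an explicit integral $\int_X t\wedge\bar t$ computed two ways via Stokes' theorem after noting that $t\cup\bar\beta=0$ means the form $t\wedge\bar\beta$ is exact; the two formulations are equivalent.
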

\begin{proof} (i) Denote by $\Lambda$ the image of the map $\bigwedge^nH^0(\Omega^1_X)\to H^0(K_X)$; the divisor $R$ is the fixed part of the linear subsystem  $|\Lambda|\subseteq |K_X|$.

Assume for contradiction that  $v\in H^1(\OO_X)$ is a nonzero vector such that $s\cup v=0$; then, since $Z\le R$,  we have $t\cup v=0$ for every $t\in \Lambda$. 

By Hodge theory there exists a nonzero $\beta\in H^0(\Omega^1_X)$ such that $v=\bar\beta$ and the condition $t\cup v=0$ is equivalent to  $t\wedge \bar{\beta}$ being  an exact form: $t\wedge \bar{\beta}=d\phi$.
 Let now $x\in X$ be a point such that $\beta(x)\ne 0$ and such that the differential of the Albanese map at $x$ is injective. Then we can find $\alpha_1,\dots \alpha_{n-1}\in H^0(\Omega^1_X)$ such that $\alpha_1,\dots \alpha_{n-1}, \beta$ span the cotangent space $T^*_xX$. Hence the form   $t:=\alpha_1\wedge\dots \wedge \alpha_{n-1}\wedge \beta$ is nonzero at $x$ and therefore  $t\wedge \bar t\ne 0$ and 
 $(-i)^n\int_X t\wedge \bar t > 0.$ On the other hand, we have 
 $$\int_X t\wedge \bar  t =\pm\int_X( t\wedge \bar \beta)\wedge \bar\alpha_1\wedge\dots \wedge \bar\alpha_{n-1}=\int _X d(\phi \wedge \bar\alpha_1\wedge\dots \wedge \bar\alpha_{n-1})=0 ,$$ a contradiction.   So $H^1(\OO_X)\overset{\cup s}{\longrightarrow} H^1(\OO_X(R))$ is injective.
\smallskip

(ii)   By  (i) and by Proposition \ref{prop:BNtangent}, the tangent space to $W(Z,X)$ at $0$ is zero, hence $\{0\}$ with reduced structure  is a component of $W(Z,X)$. 
   The vanishing of $H^0(Z|_Z)$ also follows   by (i)  taking cohomology in the usual restriction sequence $0\to \OO_X\to\OO_X(Z)\to \OO_Z(Z)\to 0$. \end{proof}

\begin{cor}\label{cor:fixed-rigid}
Let $X$ be a smooth projective variety such that $n:=\dim X=\albdim X$ and let $Z>0$ be a divisor contained in the fixed part of $|K_X|$. 
Then  $H^0(Z|_Z)=0$ and  $0\in W(Z,X)$ is an isolated point.
\end{cor}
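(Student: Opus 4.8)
The plan is to deduce Corollary \ref{cor:fixed-rigid} directly from Proposition \ref{prop:dv} by reducing the hypothesis ``$Z$ is contained in the fixed part of $|K_X|$'' to the hypothesis of the Proposition, namely ``$Z \le R$'' where $R$ is the ramification divisor of the Albanese map. So the whole corollary hinges on one geometric comparison: the fixed part of $|K_X|$ is contained in $R$. First I would recall from the proof of Proposition \ref{prop:dv}(i) the key fact that $R$ is precisely the fixed part of the subsystem $|\Lambda| \subseteq |K_X|$, where $\Lambda$ is the image of the natural map $\bigwedge^n H^0(\Omega^1_X) \to H^0(K_X)$.

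Next I would make the elementary observation that, since $|\Lambda|$ is a linear subsystem of the full canonical system $|K_X|$, its fixed part contains the fixed part of $|K_X|$: a divisor in the base locus of a larger system is certainly in the base locus of any subsystem. Concretely, if $Z$ is contained in the fixed part of $|K_X|$, then every section in $H^0(K_X)$, and in particular every section in $\Lambda$, vanishes along $Z$; hence $Z$ is contained in the fixed part of $|\Lambda|$, which is exactly $R$. This gives $Z \le R$, so the hypothesis of Proposition \ref{prop:dv} is met.

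It then remains only to verify the extra condition $h^0(Z)=1$ needed to invoke part (ii) of the Proposition. This follows because $Z$ lies in the fixed part of $|K_X|$ and $X$ has maximal Albanese dimension: a nonzero effective divisor contained in the base locus of the canonical system moves in no linear system of its own, so $h^0(\OO_X(Z))=1$. (If $h^0(Z) > 1$ there would be a genuinely moving effective divisor linearly equivalent to $Z$ but distinct from it, whose components could be peeled off the fixed part, contradicting that all of $Z$ is fixed in $|K_X|$ — here one uses that $K_X$ is effective, which holds since $X$ has maximal Albanese dimension and hence $p_g(X) \ge q(X) \ge n > 0$.) With both hypotheses $Z \le R$ and $h^0(Z)=1$ in hand, Proposition \ref{prop:dv}(ii) yields immediately that $H^0(Z|_Z)=0$ and that $0 \in W(Z,X)$ is an isolated point, which is precisely the statement of the corollary.

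The content of the argument is entirely in the inclusion ``fixed part of $|K_X|$ $\subseteq R$'', and the main (though modest) obstacle will be making the comparison between $|\Lambda|$ and $|K_X|$ clean, i.e.\ pinning down that the relevant $R$ in Proposition \ref{prop:dv} really is the fixed part of $|\Lambda|$ and that $\Lambda \subseteq H^0(K_X)$ forces the fixed-part inclusion in the stated direction. Once that is granted, the corollary is a formal consequence of the Proposition with no further computation.
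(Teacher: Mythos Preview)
Your approach is correct and essentially identical to the paper's: both reduce immediately to Proposition~\ref{prop:dv} by observing that the fixed part of $|K_X|$ is contained in $R$ (since $R$ is the fixed part of the subsystem $|\Lambda|\subseteq |K_X|$) and that $h^0(Z)=1$. One small slip worth noting: the inequality $p_g(X)\ge q(X)$ is not true in general for varieties of maximal Albanese dimension (abelian varieties have $p_g=1$), but you only need $p_g>0$, which does follow from $\albdim X=n$ and is in any case implicit once $|K_X|$ has a nonzero fixed part.
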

\begin{proof}   As usual, let $R$ denote the ramification divisor of the Albanese map of $X$. Since   $Z$ is contained in the fixed part of  $|K_X|$,  we have  $Z\subseteq R$ and $h^0(Z)=1$.  So the claim follows by  Proposition \ref{prop:dv}.\end{proof}

\section{Restriction maps }\label{sec:restriction}

In this section we consider a smooth projective variety $X$ of maximal Albanese dimension and  an effective divisor $D\subset X$ and we establish  lower bounds for the rank of the restriction  map  $$r_D\colon H^0(K_X)\to H^0(K_X|_D),$$  and for the corank of the residue map 
$$\res_D\colon H^0(K_X+D)\to H^0(K_D).$$

Such bounds, besides being intrinsically interesting, can be used  to give lower bounds for the arithmetic genus of divisors moving in a positive dimensional linear system (Corollaries \ref{cor:genere} and \ref{cor:genere2}).

More precisely,  we give three inequalities. The first two (Proposition \ref{prop:xiao}) are uniform bounds for the rank of $r_D$ and the corank of $\res_D$ under the assumptions that $D$ is irreducible, not contained in the ramification locus of the Albanese map $a\colon X\to \Alb(X)$,  and $a(D)$ generates $\Alb(X)$. This is one of the   ingredients in  the proof of Theorem \ref{thm:Cmoves}, which is our main result on the structure of Brill-Noether loci in the case of surfaces.  

The third one (Proposition \ref{prop:xiao-petri})  is based on the infinitesimal analysis of the Brill-Noether locus $W^r(D,X)$  carried out in \S \ref{sec:BNdef}: the bound that we obtain is stronger than that of Proposition \ref{prop:xiao} but it   requires further assumptions.

\subsection{Preliminary results}\label{ssec:preliminari}
The main goal of this section is to prove Proposition \ref{prop:rank}, which is the key result that enables us to obtain the inequalities of \S \ref{ssec:xiao}. 

We start by listing some well known facts of linear   algebra:
\begin{lem}[Hopf lemma]\label{lem:hopf}
Let $U,V$ and $W$ be complex vector spaces of finite dimension and let $f\colon U\otimes V\to W$ be a linear map. 

If $\ker f$ does not contain any nonzero simple tensor $u\otimes v$, then $\rk f\ge \dim U+\dim V-1$. 
\end{lem}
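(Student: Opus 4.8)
The plan is to argue by contrapositive: I would assume that $\rk f \le \dim U + \dim V - 2$ and produce a nonzero simple tensor $u\otimes v$ in $\ker f$. Set $p:=\dim U$ and $q:=\dim V$, and write $k:=\dim\ker f$, so that $\rk f = pq - k$; the hypothesis $\rk f \le p+q-2$ is equivalent to $k \ge pq - p - q + 2 = (p-1)(q-1)+1$. The goal is therefore to show that any linear subspace $N\subseteq U\otimes V$ of dimension strictly greater than $(p-1)(q-1)$ must contain a nonzero decomposable tensor; applying this to $N=\ker f$ finishes the proof.

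The key step is a dimension count on the variety of decomposable tensors. I would consider the Segre-type locus $\Sigma := \{u\otimes v \mid u\in U,\ v\in V\}\subseteq U\otimes V$ of rank-$\le 1$ tensors, which is the affine cone over the Segre variety $\bP(U)\times \bP(V)\hookrightarrow \bP(U\otimes V)$. Its projectivization has dimension $(p-1)+(q-1)$ inside $\bP(U\otimes V)\iso \bP^{pq-1}$. The subspace $N$ gives a linear subspace $\bP(N)\subseteq \bP(U\otimes V)$ of projective dimension $\dim N - 1 > (p-1)(q-1) - 1$. Since
\[
\dim \bP(\Sigma) + \dim \bP(N) \ge \big((p-1)+(q-1)\big) + (p-1)(q-1) = pq - 1 = \dim \bP(U\otimes V),
\]
the projective dimension theorem guarantees that $\bP(\Sigma)\cap \bP(N)$ is nonempty. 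A point of this intersection is exactly a nonzero simple tensor $u\otimes v$ lying in $N=\ker f$, which is the desired contradiction.

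The main obstacle is making the intersection-theoretic step fully rigorous: the projective dimension theorem in the clean form ``two subvarieties of $\bP^M$ whose dimensions sum to at least $M$ must meet'' requires that one of them be a linear space (or more generally that positivity be available), and here $\bP(N)$ is indeed linear, so the classical statement applies directly. I would cite the standard fact that a projective variety of dimension $d$ in $\bP^M$ meets every linear subspace of dimension $\ge M-d$. As an alternative, purely elementary route that avoids algebraic geometry, I could instead proceed by induction on $\dim U$: choosing coordinates so that $U$ has a basis $u_1,\dots,u_p$, I would examine the ``leading'' $U$-components of elements of $\ker f$ and use the no-simple-tensor hypothesis to peel off one factor at a time, tracking how the dimension drops. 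The Segre/intersection argument is cleaner to state, so I would present that as the primary proof and relegate the inductive bookkeeping to a remark if a self-contained argument is preferred.
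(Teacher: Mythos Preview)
Your argument is correct. The paper itself does not supply a proof of this lemma: it is listed among ``some well known facts of linear algebra'' and stated without justification, so there is no paper proof to compare against. Your Segre-variety approach is one of the standard ways to recover the bound over an algebraically closed field; the dimension count is accurate, and the intersection step is legitimate since any projective variety of dimension $d$ in $\pp^M$ meets every linear subspace of dimension at least $M-d$ (in fact the statement holds for arbitrary closed subvarieties of $\pp^M$, not only when one of them is linear, so your caveat about needing linearity is overly cautious). The name ``Hopf lemma'' refers to H.~Hopf's original topological proof via the cohomology ring of projective space, which works over $\R$ as well; your intersection-theoretic argument is the natural algebraic-geometry substitute over $\C$ and is entirely adequate for the paper's purposes.
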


\begin{lem}\label{lem:grass}
Let $V$, $W$ be complex vector spaces of finite dimension and let $f\colon \bigwedge^kV\to W$ be a linear map. 

If $\ker f$ does not contain any nonzero simple tensor $v_1\wedge \dots \wedge v_k$, then $\rk f\ge k(\dim V-k)+1$.
\end{lem}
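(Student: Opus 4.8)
The plan is to translate the hypothesis into projective/affine geometry and then apply a standard dimension count to the intersection of $\ker f$ with the locus of decomposable elements. Set $n := \dim V$ and $N := \binom{n}{k} = \dim \bigwedge^k V$. The nonzero decomposable vectors $v_1\wedge\dots\wedge v_k$, together with $0$, form the affine cone $\hat G\subseteq \bigwedge^k V$ over the Grassmannian $G(k,V)$ of $k$-dimensional subspaces of $V$, embedded by the Pl\"ucker map. Since $\dim G(k,V)=k(n-k)$, the cone $\hat G$ is an irreducible closed subvariety of $\bigwedge^k V\cong \mathbb{A}^N$ of dimension $k(n-k)+1$.

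The hypothesis that $\ker f$ contains no nonzero decomposable element says exactly that $\hat G\cap \ker f=\{0\}$. Now $\ker f$ is a linear subspace of $\bigwedge^k V$ of dimension $N-\rk f$, hence it is cut out inside $\mathbb{A}^N$ by $\rk f$ linear equations. By the affine dimension theorem (each hyperplane section drops the dimension by at most one, via Krull's principal ideal theorem), every irreducible component of $\hat G\cap \ker f$ has dimension at least $\dim\hat G-\rk f=k(n-k)+1-\rk f$. Applying this to the component $\{0\}$, which has dimension $0$, yields
$$0\ge k(n-k)+1-\rk f,$$
that is, $\rk f\ge k(n-k)+1$, as desired.

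I would note that the argument is uniform in the degenerate cases: even when $\ker f=0$ one still has $0\in\hat G\cap\ker f=\{0\}$, and the inequality then reads $\rk f=N\ge k(n-k)+1$, so no separate bookkeeping is needed. The only genuine content is the identification of the set of decomposable elements with the affine cone over the Grassmannian and the computation $\dim\hat G=k(n-k)+1$; the main (and essentially the sole) obstacle is making this translation precise, after which the conclusion is immediate from the lower bound on the dimension of the components cut out by linear equations. An equivalent route would be to projectivize and invoke the projective dimension theorem: two closed subvarieties of $\mathbb{P}(\bigwedge^k V)$ whose dimensions sum to at least $N-1$ must meet, so the disjointness of $\mathbb{P}(\ker f)$ and the Pl\"ucker image of $G(k,V)$ forces $\bigl(\dim\ker f-1\bigr)+k(n-k)<N-1$, giving the same bound on $\rk f$.
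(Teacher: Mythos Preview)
Your argument is correct: identifying the decomposable elements with the affine cone over the Pl\"ucker-embedded Grassmannian and applying the affine (or projective) dimension theorem to its intersection with the linear space $\ker f$ is exactly the standard way to prove this bound. The paper does not supply a proof of this lemma at all---it is listed among ``well known facts of linear algebra'' and stated without argument---so you have simply filled in the omitted verification, and there is nothing further to compare.
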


\begin{lem}[$\ker$/$\coker$ lemma] \label{lem:ker-coker}
Let $V$, $W$ be complex vector spaces of finite dimension and let $f, g\colon V\to W$ be  linear maps.

If $\rk(f+tg)\le \rk f$ for  every $t\in \C$, then $g(\ker f)\subseteq \im f$. 
\end{lem}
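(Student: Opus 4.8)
The plan is to reduce the statement to the vanishing of a single block of $g$ and then extract that vanishing from the rank hypothesis by a Schur-complement computation followed by a limit as $t\to 0$. Set $r:=\rk f$ and $n:=\dim V$, $m:=\dim W$, and choose bases adapted to $f$. Writing $V=\ker f\oplus V'$ with $f|_{V'}$ injective, I would pick a basis $e_1,\dots,e_r$ of $V'$ together with a basis $e_{r+1},\dots,e_n$ of $\ker f$, and a basis $w_1,\dots,w_m$ of $W$ whose first $r$ vectors span $\im f$. In these bases $f$ has the block form $\bigl(\begin{smallmatrix} I_r & 0 \\ 0 & 0\end{smallmatrix}\bigr)$, and I would write $g=\bigl(\begin{smallmatrix} A & B \\ C & D\end{smallmatrix}\bigr)$, where $A$ is $r\times r$ and $D$ is $(m-r)\times(n-r)$.

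The point of this normalization is that the desired conclusion becomes purely a statement about $D$. Indeed, $\ker f$ is spanned by $e_{r+1},\dots,e_n$, so $g(\ker f)$ is spanned by the last $n-r$ columns of $g$, namely the block $\bigl(\begin{smallmatrix} B \\ D\end{smallmatrix}\bigr)$; and a vector lies in $\im f=\langle w_1,\dots,w_r\rangle$ precisely when its last $m-r$ coordinates vanish. Hence $g(\ker f)\subseteq \im f$ \emph{if and only if} $D=0$, and it suffices to prove the latter.

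To show $D=0$ I would use the rank hypothesis via the Schur complement. For $t$ in a punctured neighborhood of $0$ the block $I_r+tA$ is invertible, and the standard block factorization gives
$$
\rk(f+tg)=r+\rk\!\left(tD-t^2\,C(I_r+tA)\inv B\right).
$$
The assumption $\rk(f+tg)\le r$ forces $tD-t^2 C(I_r+tA)\inv B=0$ for all such $t$; dividing by $t$ and letting $t\to 0$ yields $D=0$, which is the claim. The only mildly delicate points — and the closest thing to an obstacle in an otherwise elementary argument — are the bookkeeping that identifies the conclusion with $D=0$ and the justification of the limit: one must note that $\det(I_r+tA)$ is a polynomial in $t$ equal to $1$ at $t=0$, hence nonzero on a punctured disc, so that the Schur complement identity is valid there and the entries of $t\,C(I_r+tA)\inv B$ are regular at $t=0$, legitimizing the passage to the limit.
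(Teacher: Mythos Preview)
Your proof is correct. Note, however, that the paper does not actually prove this lemma: it is listed among ``well known facts of linear algebra'' (alongside the Hopf lemma and Lemma~\ref{lem:grass}) and stated without proof, so there is no argument in the paper to compare against.

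Your Schur-complement computation is a clean, self-contained way to establish the result. The perspective the authors likely have in mind is the geometry of determinantal varieties: the locus $\{h\in\Hom(V,W): \rk h\le r\}$ is smooth at a point $f$ of rank exactly $r$, with tangent space $\{g: g(\ker f)\subseteq \im f\}$; if the entire line $\{f+tg\}_{t\in\C}$ lies in this locus, then $g$ must be tangent to it at $f$. Your block calculation is essentially the coordinate verification of this tangent-space description, and has the advantage of being fully explicit and requiring no smoothness considerations.
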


The next  result is possibly  also known, but  since it is less obvious we give a proof for completeness.

\begin{lem} \label{lem:xiaode}
Let $V$, $W$ be complex vectors spaces of finite dimension and set $q:=\dim V$.  Let 
  $\phi: \bigwedge^2 V \to W$ be a linear map such that: 
    \begin{enumerate}[(a)] 
  \item for every  $0\ne v\in V,$ there exists  $w\in V$ such that $\phi(v\wedge w)\neq 0$;
  \item if  $\phi(v\wedge w)=\phi(v\wedge u)=0$ and $v \neq 0$,   then $\phi(u\wedge w)=0$.
    \end{enumerate}
 Then:
      \begin{enumerate}
\item $\dim \phi(V) \geq q-1.$
\item  there exists $v\in V$ such that the restriction of $\phi$ to $v\wedge V$ is injective.     
 \end{enumerate}
  \end{lem}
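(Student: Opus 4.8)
For $0\ne v\in V$ I set $\phi_v\colon V\to W$, $\phi_v(w):=\phi(v\wedge w)$, and $K_v:=\ker\phi_v$; note $v\in K_v$ and, by antisymmetry, $\phi_v(w)=-\phi_w(v)$. Hypothesis (a) says exactly that $\phi_v\ne 0$, i.e. $\dim K_v\le q-1$, for every $v\ne 0$, while hypothesis (b) says that $\phi$ vanishes on $\bigwedge^2 K_v$. From (b) I first deduce that for every nonzero $u\in K_v$ one has $K_v\subseteq K_u$ (if $w\in K_v$ then $u,w\in K_v$, so $\phi(u\wedge w)=0$), and then, applying the same reasoning with the roles of $u$ and $v$ exchanged, that $K_u=K_v$. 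Consequently any two of the subspaces $K_v$ are either equal or meet only at $0$, so the $K_v$ partition $V\setminus\{0\}$ into linear subspaces, each isotropic for $\phi$. I observe that (ii) is equivalent to the existence of some $v$ with $K_v=\langle v\rangle$, and that (ii) implies (i): if $K_v=\langle v\rangle$, then $w\mapsto\phi(v\wedge w)$ has kernel exactly $\langle v\rangle$, so $\phi$ is injective on $v\wedge V\cong V/\langle v\rangle$, whose dimension is $q-1$. Thus it suffices to prove (ii).

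\textbf{The generic part.} The function $v\mapsto\dim K_v=q-\rk\phi_v$ is upper semicontinuous on $\mathbb{P}(V)$, so its minimum $d:=\min_{v\ne 0}\dim K_v$ is attained on a dense open set $U\subseteq\mathbb{P}(V)$, where $\dim K_v=d$. I must show $d=1$, so I assume $d\ge 2$ and seek a contradiction. On $U$ the rank of $\phi_v$ is constant, hence $v\mapsto K_v$ is a morphism $U\to G(d,V)$, constant along each part, whose fibres have closures the linear subspaces $\mathbb{P}(K_v)\cong\mathbb{P}^{d-1}$. Since the parts partition $V\setminus\{0\}$, this exhibits a dense subset of $\mathbb{P}(V)=\mathbb{P}^{q-1}$ as swept out by an algebraic family of pairwise disjoint linear subspaces of positive dimension $d-1$.

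\textbf{The contradiction.} The crux is to rule out such a covering. One route is geometric: the general parts realize a dense open subset of $\mathbb{P}^{q-1}$ as fibred in disjoint positive-dimensional linear subspaces over its $(q-d)$-dimensional base, which is incompatible with $\mathbb{P}^{q-1}$ having Picard number one. A route closer to the preceding lemmas, which is the one I would carry out in detail, is purely linear-algebraic: fixing a general part $K$ with independent $a,b\in K$, condition (b) gives $K_a=K_b=K$, whence $\ker(\phi_a+t\phi_b)=K$ for all $t\in\C$ (as $a+tb\in K$); passing to $V/K$ this yields a pencil of injective maps $V/K\to W$ with no common kernel. Confronting this with a second general part $K'$, disjoint from $K$, and applying the Hopf lemma (Lemma \ref{lem:hopf}) to the restriction of $\phi$ to $K\otimes K'$ — whose kernel contains no nonzero simple tensor, since $K\cap K'=0$ — I expect to produce, using the covering property, more constraints than $d\ge 2$ can sustain, forcing $d=1$.

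\textbf{Main obstacle.} The first two paragraphs are routine; the entire weight of the proof rests on the last step, namely excluding $d\ge 2$. This is exactly the point where the local data alone (one isotropic part) are insufficient — one can satisfy conditions (a) and (b) near a single fat part by enlarging $W$ — and where the \emph{global} covering of $\mathbb{P}^{q-1}$ together with the algebraic closedness of $\C$ must be brought to bear. Making the linear-algebraic contradiction clean, rather than invoking the Picard-number statement, is the hard part.
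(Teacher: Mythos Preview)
Your setup is correct and matches the paper's: defining $\phi_v$, $K_v=\ker\phi_v$, showing the $K_v$ partition $V\setminus\{0\}$, reducing (i) to (ii), and choosing $v$ with $d=\dim K_v$ minimal. The gap is entirely in your ``contradiction'' paragraph, as you yourself flag: neither the Picard-number remark nor the Hopf-lemma sketch on $K\otimes K'$ is carried to an actual contradiction, and indeed the Hopf bound $\rk(\phi|_{K\otimes K'})\ge 2d-1$ constrains only the size of the image in $W$, which you have no control over.

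The idea you are missing is that not only the kernels but also the \emph{images} $S(v'):=\phi_{v'}(V)$ are constant along a part. The paper gets this from the $\ker/\coker$ lemma (Lemma~\ref{lem:ker-coker}): since $\rk(\phi_v+t\phi_u)=\rk\phi_{v+tu}\le q-d=\rk\phi_v$ for every $u$ and every $t$, one has $\phi_u(K_v)\subseteq S(v)$; taking $u=v'\in K_v$ and using $K_{v'}=K_v$ gives $S(v')\subseteq S(v)$, hence $S(v')=S(v)$. Now fix a complement $L$ of $K_v$ in $V$: for every nonzero $v'\in K_v$ the induced map $h_{v'}\colon L\to S(v)$ is an isomorphism between spaces of the \emph{same} dimension $q-d$, depending linearly on $v'$. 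Its determinant is then a homogeneous polynomial of degree $q-d>0$ on $K_v$ with no zeros on $\mathbb P(K_v)$, which over $\C$ forces $\dim K_v=1$.

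So the single missing step is to pin down the common image before passing to $V/K$; once you do, your ``pencil of injective maps $V/K\to W$'' becomes a linear family of \emph{isomorphisms} onto a fixed $(q-d)$-dimensional target, and the determinant argument finishes immediately.
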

  \begin{proof}
  We observe first of all that (i) follows from (ii), hence it is enough to prove (ii).
  
  For every $v\in V$ we let $k_v\colon V\to W$ be the linear map  defined by $x\mapsto \phi(v\wedge x)$,  and we let $U(v)$ be the kernel  and $S(v)$ be the image  of $k_v$.
 Of course $v\in U(v)$, and for  $v\ne 0$ the assumptions give: 
    \begin{enumerate}[(1)]
  \item  $U(v)\subsetneq V$; 

 \item  $U(v)=U(v') \iff  v'\in U(v)\setminus \{0\}$.   
 \end{enumerate}
 
  Claim (ii) is equivalent to the existence of a vector $v\in V$ such that $U(v)$ is $1$-dimensional. Choose $v$ with $m:=\dim U(v)$ minimal; notice that we have $0<m<q$. For any  vector $u\in V$ and any $t\in \C$ the map $k_v+tk_u=k_{v+tu}$ has rank $\le q-m$; by Lemma \ref{lem:ker-coker} we have $k_u(U(v))\subseteq S(v)$. Hence if $\phi(v'\wedge v)=0$, then for any $u\in V$ there exists $h\in V$ such that 
  $$k_{u}(v') =\phi(u\wedge v')=\phi(v\wedge h)=k_v(h).$$
Since $k_{u}(v')=-k_{v'}(u)$, it follows that   for every $v'\in U(v)$ we have $S(v')\subseteq S(v)$; since $U(v)=U(v')$ by (2), it follows $S(v)=S(v')$. Let now $L\subset V$ be a subspace such that $V=U(v)\oplus L$. Then for every $0\ne v'\in U(v)$, $k_{v'}$ restricts to an isomorphism $h_{v'}\colon L\to S(v)$.   Fixing bases for $L$ and $S(v)$, this isomorphism is represented by an invertible matrix of order   $q-m> 0$, whose entries depend linearly on $v'$.  Then taking determinants one obtains a homogeneous polynomial  of degree $q-m>0$ that has no zeros  on $\pp(U(v))$. Since  we are working over an algebraically closed field, this is possible only if $\dim U(v)=1$. 
  \end{proof}
Given a vector bundle $E$ on a variety $X$ and a finite dimensional subspace $V\subseteq H^0(X,E)$, for any integer $k\ge 0$  we denote by $\psi_k\colon \bigwedge^kV \to H^0(X,\bigwedge^kE)$ the natural map.  Here is the main result of this section:
  \begin{prop}\label{prop:rank} Let $X$ be an irreducible  variety, let $E$ be a rank $n$ vector bundle on $X$. Assume that there exists a subspace  $V\subseteq H^0(X,E)$ of dimension $q$ that generates $E$ generically. 
  
  Then the  map  $\psi_n\colon  \bigwedge^n V \to H^0(X,\det E)$ has rank $\ge q-n+1$.
  \end{prop}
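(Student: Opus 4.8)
The plan is to pass to the function field $K=\C(X)$, turning the statement into a question of linear algebra over $K$, and then to induct on the rank $n$, with the Hopf lemma (Lemma~\ref{lem:hopf}) doing the crucial work at the end. Write $\eta$ for the generic point of $X$ and $E_\eta:=E\otimes_{\OO_X}K$, an $n$-dimensional $K$-vector space. Since $X$ is integral, restriction to $\eta$ embeds $H^0(X,\det E)$ into $(\det E)_\eta=\bigwedge^n_K E_\eta$ (a section of a line bundle vanishing at $\eta$ vanishes on a dense open set, hence is $0$), and likewise embeds $H^0(X,E)$ into $E_\eta$. Setting $W:=\im(V\to E_\eta)$, the map $V\to W$ is then an isomorphism, so $\dim_\C W=q$; the hypothesis that $V$ generates $E$ generically becomes $K\cdot W=E_\eta$; and $\rk\psi_n$ equals $\dim_\C\im\Psi$, where $\Psi\colon\bigwedge^n_\C W\to\bigwedge^n_K E_\eta$ sends $w_1\wedge\dots\wedge w_n$ to the $K$-exterior product $w_1\wedge_K\dots\wedge_K w_n$. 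Thus it suffices to prove the following purely algebraic claim: \emph{if $W$ is a $\C$-subspace of $\C$-dimension $q$ of an $n$-dimensional $K$-vector space $U$ with $K\cdot W=U$, then the induced map $\Psi\colon\bigwedge^n_\C W\to\bigwedge^n_K U$ has $\dim_\C\im\Psi\ge q-n+1$.}

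I would prove this by induction on $n$. For $n=1$ the map $\Psi$ is the inclusion $W\hookrightarrow U\cong K$, so $\dim_\C\im\Psi=q=q-1+1$. For the inductive step I fix \emph{any} nonzero $w_0\in W$ and set $F:=\{f\in K\mid fw_0\in W\}$, a $\C$-subspace of $K$ with $1\in F$; write $\dim_\C F=1+a$ with $a\ge 0$, so that $W\cap Kw_0=Fw_0$ has $\C$-dimension $1+a$. Passing to the quotient $\bar U:=U/Kw_0$, of $K$-dimension $n-1$, the image $\bar W$ of $W$ has $\dim_\C\bar W=q-1-a$ and still satisfies $K\cdot\bar W=\bar U$. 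By the inductive hypothesis the corresponding map $\bar\Psi\colon\bigwedge^{n-1}_\C\bar W\to\bigwedge^{n-1}_K\bar U$ has image $G_0$ with $\dim_\C G_0\ge(q-1-a)-(n-1)+1=q-n+1-a$.

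It remains to feed this back through $w_0$. Wedging with $w_0$ induces a $K$-linear isomorphism $\iota\colon\bigwedge^{n-1}_K\bar U\xrightarrow{\,\sim\,}\bigwedge^n_K U$, and for every $f\in F$ and every $\xi\in\bigwedge^{n-1}_\C W$ the $K$-linearity of the exterior product gives $\Psi\big((fw_0)\wedge\xi\big)=f\cdot\iota\big(\bar\Psi(\bar\xi)\big)$. Hence $\im\Psi$ contains $F\cdot\iota(G_0)$, where $F$ acts through the $K$-module structure on the $1$-dimensional $K$-space $\bigwedge^n_K U$. Fixing a $K$-linear identification $\bigwedge^n_K U\cong K$ turns $\iota(G_0)$ into a $\C$-subspace $G\subseteq K$ with $\dim_\C G=\dim_\C G_0$, and shows $\im\Psi\supseteq F\cdot G$. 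Now the key point is the lower bound $\dim_\C(F\cdot G)\ge\dim_\C F+\dim_\C G-1$, which is exactly the Hopf lemma (Lemma~\ref{lem:hopf}) applied to the multiplication map $F\otimes_\C G\to K$: its kernel contains no nonzero simple tensor $f\otimes g$, since $f,g\ne 0$ forces $fg\ne 0$ in the domain $K$. Combining the estimates, $\dim_\C\im\Psi\ge(1+a)+(q-n+1-a)-1=q-n+1$, which completes the induction and the proof.

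The step I expect to be the crux is this last one. The naive induction loses control precisely when $W\cap Kw_0$ is large (that is, $a>0$), and what rescues it is that the lost $a$ dimensions reappear \emph{multiplicatively} in the product $F\cdot G$; this is why no genericity hypothesis on the choice of $w_0$ is needed. It is worth emphasizing that the product inequality $\dim_\C(FG)\ge\dim_\C F+\dim_\C G-1$, equivalently the validity of the Hopf lemma, depends on $\C$ being algebraically closed: over a field admitting a proper finite extension $L$ one may take $F=G=L$ and the bound fails, so here it is genuinely the ground field that makes the argument go through.
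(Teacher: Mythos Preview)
Your proof is correct and takes a genuinely different route from the paper's.

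Both arguments proceed by induction on $n$, but they diverge at the inductive step. The paper first invokes Lemma~\ref{lem:xiaode} (applied to $\psi_2\colon\bigwedge^2 V\to H^0(\bigwedge^2 E)$) to single out a section $s\in V$ for which $t\mapsto\psi_2(s\wedge t)$ has one-dimensional kernel; in your language this is a choice of $w_0$ with $a=0$. After shrinking $X$ so that $s$ is nowhere zero, the quotient bundle $E'=E/\OO_X s$ receives an image $V'$ of dimension exactly $q-1$, and the induction goes through cleanly with no loss. The price is Lemma~\ref{lem:xiaode} itself, whose proof uses the $\ker/\coker$ lemma together with a determinant-vanishing argument on $\pp(U(v))$ that requires the ground field to be algebraically closed.

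You instead pass to the function field, pick an \emph{arbitrary} $w_0$, and allow the intersection $W\cap Kw_0$ to have dimension $1+a$ with $a\ge 0$; the quotient $\bar W$ then has dimension only $q-1-a$ and the inductive bound drops to $q-n+1-a$. The lost $a$ dimensions are recovered multiplicatively: $\im\Psi$ contains $F\cdot\iota(G_0)$, and the Hopf lemma applied to multiplication in the field $K$ gives back exactly the missing $a$. This trades Lemma~\ref{lem:xiaode} for Lemma~\ref{lem:hopf}, which is simpler and more classical, and the generic-point reduction avoids the bookkeeping of restricting to open subsets. The dependence on $\C$ being algebraically closed is still present (and, as you note, unavoidable), but it enters only through Hopf. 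One cosmetic point: Lemma~\ref{lem:hopf} is stated for finite-dimensional target, so strictly speaking you apply it to the map $F\otimes_\C G\to F\!\cdot\! G$ rather than to $F\otimes_\C G\to K$; this changes nothing.
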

  \begin{proof} The proof is by induction on the rank $n$ of $E$, the case $n=1$ being trivial.

Up to restricting to a Zariski open set, we may assume that $X$ is affine and that $V$ generates $E$. 

Consider first  the map 
$\psi_2\colon  \bigwedge^2 V\to W:=H^0(X,\bigwedge^2E)$. Since  $\psi_2$ satisfies the assumptions of Lemma \ref{lem:xiaode}, there exist a section $s\in V$ such that $\psi_2(s\wedge t)=0$ if and only if $t=\lambda s$ for some $\lambda \in \C$. 
Up to replacing $X$ by an open subset, we may assume that $s$ vanishes nowhere on  $X$, hence there is a short exact sequence 
\begin{equation}\label{eq:EE'}
0\to\OO_X\overset{s}{\to}E\to E'\to 0,\end{equation}
with $E'$ a rank $n-1$ vector bundle.  We denote by $V'\subseteq H^0(X, E')$ the image of $V$; the subspace $V'$ has dimension $q-1$ and generates $E'$ on $X$, hence by the inductive assumption the map $\psi'_{n-1}\colon \bigwedge^{n-1}V'\to H^0(X,\det E' )$ has rank $\ge (q-1)-(n-1)+1=q-n+1$. 

The sequence \eqref{eq:EE'} induces an isomorphism $\det E\to \det E'$ and the induced map $H^0(X,\det E)\to H^0(X,\det E')$  maps $\im\psi_n$ to a subspace containing $\im\psi'_{n-1}$. Hence $\rk \psi_n\ge \rk \psi'_{n-1}\ge q-n+1$.
\end{proof}

\subsection{Uniform bounds }\label{ssec:xiao}

Here we use the results of \S \ref{ssec:preliminari} to bound the rank  of the map $r_D\colon H^0(K_X)\to H^0(K_X|_D)$ and the corank of the residue map $\res_D\colon H^0(K_X+D)\to H^0(K_D)$, where $D$ is   an effective divisor of an irregular variety $X$.

\begin{prop}\label{prop:xiao}  Let $X$ be a smooth projective variety with $\albdim X=\dim X=n$ and let $D>0$ be an irreducible divisor of $X$ such  that the image  of $D$ via the Albanese map $a\colon X\to  \Alb(X)$ generates $\Alb(X)$. Assume that $D$ is not contained in the ramification divisor of $a$. 
Then, letting $q:=q(X)$:
\begin{enumerate}
\item the rank of $r_D\colon H^0(K_X)\to H^0(K_X|_D)$ is $\ge q-n+1$;
\item the corank of $\res_D\colon H^0(K_X+D)\to H^0(K_D)$  is $\ge q-n+2$.
\end{enumerate}
\end{prop}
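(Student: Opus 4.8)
The plan is to deduce both inequalities from the single algebraic input Proposition \ref{prop:rank}, applied to the bundle $E=\Omega^1_X$ and the subspace $V=H^0(\Omega^1_X)\subseteq H^0(X,\Omega^1_X)$, which has dimension $q$. Since $X$ has maximal Albanese dimension, the global $1$-forms generate $\Omega^1_X$ at the generic point (the differential of $a$ is generically injective, and $a^*$ of the invariant $1$-forms on $\Alb(X)$ spans $T^*_xX$ generically), so the hypotheses of Proposition \ref{prop:rank} are met with $n=\operatorname{rk}\Omega^1_X=\dim X$. The proposition then gives $\operatorname{rk}\psi_n\ge q-n+1$, where $\psi_n\colon\bigwedge^n H^0(\Omega^1_X)\to H^0(X,\Omega^n_X)=H^0(K_X)$ is the wedge map whose image is exactly the subspace $\Lambda$ of Proposition \ref{prop:dv}. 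So I first establish that $\dim\Lambda\ge q-n+1$.

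For part (i) I want to pass from forms on $X$ to their restrictions to $D$. The key point is that the restriction map $r_D$ does not kill too much of $\Lambda$. Concretely, I would consider the composite of $\psi_n$ with $r_D$, i.e.\ restrict the wedge products of $1$-forms to $D$. The crucial observation is that a form $\alpha_1\wedge\cdots\wedge\alpha_n\in\Lambda$ restricts to zero on $D$ precisely when $D$ is contained in its divisor of zeros; since $D$ is \emph{not} contained in the ramification divisor $R$ (which is the fixed part of $|\Lambda|$, as in the proof of Proposition \ref{prop:dv}), and $a(D)$ generates $\Alb(X)$ so that the $1$-forms still generate the conormal-to-generic-point directions along $D$, the restriction of $\psi_n$ to $D$ is generically surjective onto $\Lambda|_D$ and loses at most the part already contained in the fixed locus. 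The cleanest route is to reapply Proposition \ref{prop:rank} on $D$ itself: the restricted $1$-forms $V|_D\subseteq H^0(D,\Omega^1_X|_D)$ generate a rank-$n$ bundle generically on $D$ (using that $D\not\subseteq R$ and $a|_D$ has maximal rank generically), so the wedge map into $H^0(D,\Omega^n_X|_D)=H^0(K_X|_D)$ has rank $\ge q-n+1$, and this wedge map factors through $r_D$. Hence $\operatorname{rk} r_D\ge q-n+1$.

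For part (ii), I would exploit the exact sequence $0\to K_X\to K_X+D\to K_D\to 0$ (the adjunction/residue sequence), whose associated long exact sequence reads
\begin{equation*}
H^0(K_X)\to H^0(K_X+D)\overset{\res_D}{\to} H^0(K_D)\overset{\delta}{\to} H^1(K_X).
\end{equation*}
The corank of $\res_D$ equals $\dim H^0(K_D)-\operatorname{rk}\res_D$, and since $\operatorname{im}\res_D=\ker\delta$, I have $\operatorname{corank}\res_D=\operatorname{rk}\delta=\dim\operatorname{im}\delta$. By Serre duality the coboundary $\delta\colon H^0(K_D)\to H^1(K_X)\cong H^{n-1}(\OO_X)^{\vee}$ is, up to duality, the map measuring exactly the failure of restriction, and its rank is controlled by the rank of $r_D$ together with the cokernel of $H^0(K_X)\to H^0(K_X+D)$. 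I would compute this by dualizing: $\operatorname{corank}\res_D$ is bounded below by the corank of the dual of $r_D$ plus the contribution of $D$ itself, yielding the clean bound $\ge (q-n+1)+1=q-n+2$. The extra $+1$ comes from the one-dimensional space of sections of $\OO_X$ (or equivalently the section cutting out $D$) that enters the residue sequence and is not seen by $r_D$.

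The main obstacle I anticipate is the bookkeeping in passing from the intrinsic bound on $\psi_n$ (forms on $X$) to the restriction map $r_D$ (forms on $D$): I must justify carefully that restricting to $D$ does not degrade the rank below $q-n+1$, which hinges precisely on the two hypotheses that $D\not\subseteq R$ and that $a(D)$ generates $\Alb(X)$ — these guarantee that the $1$-forms still generate a rank-$n$ subsheaf of $\Omega^1_X|_D$ at the generic point of $D$, so that Proposition \ref{prop:rank} can be invoked on $D$ with the same parameters $q$ and $n$. The second delicate point is tracking the exact value of the extra constant ($+1$ versus $+2$) through Serre duality in part (ii); getting the residue sequence and its dual aligned correctly is where the counting must be done with care, but it is routine once the structure is set up.
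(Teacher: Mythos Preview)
Your argument for part (i) is correct and is exactly the paper's approach: apply Proposition~\ref{prop:rank} on $D$ to the rank-$n$ bundle $E=\Omega^1_X|_D$ and the subspace $V=i^*H^0(\Omega^1_X)$, noting that $D\not\subseteq R$ gives generic generation and that $a(D)$ generating $\Alb(X)$ gives $\dim V=q$.

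For part (ii), however, there is a genuine gap. You correctly reach the point where $\operatorname{corank}\res_D=\operatorname{rk}\delta$ with $\delta\colon H^0(K_D)\to H^1(K_X)$ the coboundary, but your bound on $\operatorname{rk}\delta$ is not an argument: there is no mechanism by which ``the one-dimensional space of sections of $\OO_X$'' adds $1$ to $\operatorname{rk} r_D$ to produce a lower bound for $\operatorname{rk}\delta$. The maps $r_D$ and $\delta$ live in different parts of the long exact sequence and their ranks are not related in the way you suggest.

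The paper's route is different and uses the specific geometry of $D$. One dualizes $\delta$ to obtain ${}^t\delta\colon H^{n-1}(\OO_X)\to H^{n-1}(\OO_D)$; after an embedded resolution one may take $D$ smooth, and then by Hodge theory ${}^t\delta$ is the complex conjugate of the natural restriction $\rho\colon H^0(\Omega^{n-1}_X)\to H^0(\Omega^{n-1}_D)=H^0(K_D)$. Now the crucial point you are missing: one applies Proposition~\ref{prop:rank} a second time, but to the rank-$(n-1)$ bundle $E=\Omega^1_D$ (not $\Omega^1_X|_D$) with $V=i^*H^0(\Omega^1_X)\subseteq H^0(\Omega^1_D)$, still of dimension $q$. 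This yields $\operatorname{rk}\psi_{n-1}\ge q-(n-1)+1=q-n+2$, and since $\operatorname{im}\psi_{n-1}\subseteq\operatorname{im}\rho$ the bound follows. The ``extra $+1$'' thus comes from the drop in rank of the bundle from $n$ to $n-1$, not from any formal bookkeeping with the residue sequence.
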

\begin{proof}
(i) The inequality follows by Proposition \ref{prop:rank} taking $E=\Omega^1_X|_D$ and  $V=i^*H^0(X,  \Omega^1_X)$, where $i\colon D\to X$ is the inclusion.
\medskip

(ii) Consider the short exact sequence $0\to K_X\to K_X+D\to K_D\to 0$. Taking cohomology, we see that the corank of $\res_D$ is equal to the rank of the coboundary map $\partial \colon H^0(K_D)\to H^1(K_X)$ or, taking duals, to the rank of ${}^t\partial \colon H^{n-1}(\OO_X)\to H^{n-1}(\OO_D)$.

Now let $(X',D')$ be an embedded resolution  of $(X,D)$; then there is  a commutative diagram:
\[
\begin{CD}
H^{n-1}(\OO_{X})@>>>H^{n-1}(\OO_{X'})\\
@V{{}^t\partial}VV  @V{{}^t\partial}VV \\
H^{n-1}(\OO_{D})@>>>H^{n-1}(\OO_{D'})
\end{CD}
\]
where the top horizontal map is an isomorphism. Hence we may assume
without loss of generality  that $D$ is smooth. 

Then, by Hodge theory, the map ${}^t\partial$ is the complex conjugate of the natural  map $\rho\colon H^0(\Omega^{n-1}_X)\to H^0(K_D)$. Here we set $E=\Omega^1_D$ and  $V=i^*H^0( \Omega^1_X)\subseteq H^0(\Omega^1_D)$, where $i\colon D\to X$ is the inclusion; then  the image of $\rho$ contains the image of $\psi_{n-1}\colon \bigwedge^{n-1}V\to H^0(K_D)$. The required inequality now follows by Proposition \ref{prop:rank}, since $V$ has dimension $q$ by the assumption that $a(D)$ generates $\Alb(X)$.
\end{proof}

Statement (i) of Proposition \ref{prop:xiao} has been proven in the case of  surfaces fibered over $\pp^1$  by Xiao Gang in \cite{xiaoirreg}. The following corollary generalizes to arbitrary dimension the  main result of  \cite{xiaoirreg}. 

\begin{cor}\label{cor:genere}
  Let $X$ be a smooth projective variety with $\albdim X=\dim X=n$ and let $D>0$ be an irreducible divisor of $X$.  If $h^0(D)=r+1\ge 2$, 
  then: $$h^0(K_D)\ge 2(q+1-n)+r.$$
\end{cor}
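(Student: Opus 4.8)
The plan is to deduce Corollary \ref{cor:genere} directly from Proposition \ref{prop:xiao}(i) by a clever choice of the effective divisor $D$. The issue is that Proposition \ref{prop:xiao} requires $a(D)$ to generate $\Alb(X)$ and that $D$ not be contained in the ramification divisor of $a$, whereas the corollary gives only that $D$ moves in a linear system of dimension $r\ge 1$. So the first step is to reduce the corollary to a situation where Proposition \ref{prop:xiao} applies. I would exploit the freedom given by $h^0(D)=r+1\ge 2$: since $|D|$ is at least a pencil, a general member $D_0\in |D|$ is still irreducible (as $D$ is) and, by moving it, one can arrange that it avoids the finitely many conditions that would make Proposition \ref{prop:xiao} fail. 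In particular, a general member of a nontrivial linear system on a variety of maximal Albanese dimension will have $a(D_0)$ generating $\Alb(X)$ and will not be swept into the ramification divisor.

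The heart of the argument is a restriction-to-the-divisor computation. I would compare the canonical bundle of $X$ with that of $D$ via adjunction, $K_D=(K_X+D)|_D$, and analyze $h^0(K_D)$ by splitting it into two contributions. One natural source of sections of $K_D$ comes from restricting sections of $K_X$, i.e. the image of $r_D\colon H^0(K_X)\to H^0(K_X|_D)$, which has rank $\ge q-n+1$ by Proposition \ref{prop:xiao}(i). The factor $2$ in the bound $2(q+1-n)+r$ strongly suggests that \emph{two} such contributions of size roughly $q-n+1$ are to be added. The second contribution should come from the residue/corank bound in Proposition \ref{prop:xiao}(ii), which controls $\res_D\colon H^0(K_X+D)\to H^0(K_D)$ and supplies a subspace of $H^0(K_D)$ of dimension at least $q-n+2$ that is (up to the image of $r_D$) complementary.

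Concretely, I would run the exact sequence $0\to K_X\to K_X+D\to K_D\to 0$ and keep track of images. The image of $\res_D$ in $H^0(K_D)$ has dimension $h^0(K_X+D)-h^0(K_X)+(\text{corank contribution})$; more usefully, the \emph{image} of $H^0(K_X+D)\to H^0(K_D)$ together with the sections of $K_D$ coming from the moving part of $|D|$ should be combined. The extra term $r$ in the bound is exactly $h^0(D)-1=r$, so I expect it to enter by using that $D$ moves: multiplying sections of $\OO_X(D)$ against a fixed section of $K_X$ (or equivalently pairing the pencil $|D|$ with the restriction map) produces $r$ further independent sections of $K_D$ beyond those accounted for by $r_D$. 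The cleanest packaging is likely: $h^0(K_D)\ge \operatorname{rk}(r_D)+[h^0(K_X+D)-h^0(K_X)]$, then estimate each piece, using Proposition \ref{prop:xiao}(i) for the first and Proposition \ref{prop:xiao}(ii) (rewritten as a lower bound on $h^0(K_X+D)-h^0(K_X)$) for the second, and finally bound $h^0(K_X+D)-h^0(K_X)$ below by $(q-n+1)+r$ using the pencil structure.

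The main obstacle I anticipate is the bookkeeping needed to guarantee that the two contributions of size $q-n+1$ are genuinely \emph{independent} inside $H^0(K_D)$, so that they add rather than overlap; this is where the precise statements of both parts of Proposition \ref{prop:xiao} must be used in tandem, and where one must verify that the general-member reduction preserves all hypotheses simultaneously. A secondary subtlety is correctly extracting the additive $r$ from $h^0(D)=r+1$: one must check that the sections produced by the moving pencil are not already counted among the restricted canonical sections, which again should follow from the genericity of $D_0$ and the irreducibility hypothesis.
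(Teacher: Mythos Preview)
Your overall strategy is right: reduce to a general member of $|D|$ so that Proposition~\ref{prop:xiao} applies, then use both parts of that proposition to produce two contributions of order $q-n$, with the extra $r$ coming from the linear system. This is exactly what the paper does. However, the way you propose to assemble the pieces contains a genuine confusion.

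The problematic line is your proposed inequality $h^0(K_D)\ge \rk(r_D)+[h^0(K_X+D)-h^0(K_X)]$. The map $r_D$ lands in $H^0(K_X|_D)$, not in $H^0(K_D)$, so $\im r_D$ is not a subspace of $H^0(K_D)$ at all; there is no obvious reason the two summands should add inside $H^0(K_D)$. What the paper actually does is decompose $h^0(K_D)=\rk(\res_D)+\mathrm{corank}(\res_D)$ and bound each piece separately. The corank is handled directly by Proposition~\ref{prop:xiao}(ii), giving $\mathrm{corank}(\res_D)\ge q-n+2$. The rank is bounded from below by observing that $\im\res_D$ contains the image of the multiplication map
\[
(\im r_D)\otimes \bigl(H^0(D)/\langle s\rangle\bigr)\longrightarrow H^0(K_D),
\]
where $s$ cuts out $D$; since this map kills no nonzero simple tensor, the Hopf lemma (Lemma~\ref{lem:hopf}) gives $\rk(\res_D)\ge (q-n+1)+r-1=q-n+r$. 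Adding the two bounds yields $2(q+1-n)+r$. So the role of $\rk(r_D)$ is not to contribute directly to $h^0(K_D)$, but to feed into the Hopf-lemma bound on $\rk(\res_D)$; this is the step missing from your outline, and without it the ``independence'' worry you flag at the end cannot be resolved.
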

\begin{proof}
By the semicontinuity of $h^0(K_{D'})$ as $D'\in |D|$ varies, we may replace  $D$ by a general element of $|D|$, and  assume that $D$ is not contained in the ramification locus of the Albanese map $a\colon X\to \Alb(X)$. Observe also that $a(D)$ generates $\Alb(X)$, since $D$ moves linearly.

Consider the map  $\res_D\colon H^0(K_X+D){\lra}H^0(K_D)$. 
By Proposition \ref{prop:xiao}, we have $h^0(K_D)\ge \rk \res_D+q-n+2$.

To give a bound on the rank of $ \res_D$, we observe that the image of $\res_D$ contains the image of the multiplication map 
$$(\im r_D)\otimes H^0(D)/\!<s>\lra H^0(K_D),$$
where $s\in H^0(D)$ is a section defining $D$.  By Proposition \ref{prop:xiao},  $\rk  r_D\geq q-n+1$ and so applying  Lemma \ref{lem:hopf}  we obtain $\rk \res_D\ge q-n+r$. Hence $h^0(K_D)\ge (q-n+r)+q-n+2=2(q+1-n)+r$.
\end{proof}

For future reference, we observe the following:
  \begin{cor}\label{cor:CC} Let $S$ be a smooth complex surface with $\albdim(S)=2$,   let $a\colon S\to \Alb(S)$ be the Albanese map and let $C\subset S$ be a 1-connected  curve  having a component $C_1$ not contained in  the ramification locus of  $a$ and such that  $C_1^2>0$. Then:
 \begin{enumerate}
\item $h^0(K_S-C)\leq \chi(S)$;
 
\item $h^0(C|_C)\ge q+C^2-p_a(C).$
\end{enumerate}

  \end{cor}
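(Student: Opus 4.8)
The plan is to reduce both inequalities to the single estimate
$$\rk\left(r_C\colon H^0(K_S)\to H^0(K_S|_C)\right)\ge q-1,$$
and then read off (i) and (ii) from standard exact sequences. I would prove this rank bound by passing to the distinguished component $C_1$. Since $C_1\subseteq C$, restriction from $C$ to $C_1$ gives a factorization $r_{C_1}=(\text{res})\circ r_C$, whence $\rk r_C\ge \rk r_{C_1}$. The component $C_1$ is irreducible and, by hypothesis, not contained in the ramification divisor of $a$, so Proposition~\ref{prop:xiao}(i) (with $n=2$) yields $\rk r_{C_1}\ge q-1$, \emph{provided} that $a(C_1)$ generates $\Alb(S)$; verifying this last condition is the crux of the argument.

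So the main obstacle is to show that $a(C_1)$ generates $\Alb(S)$, and this is exactly where $C_1^2>0$ enters. If $a(C_1)$ failed to generate, then it would lie in a translate of a proper abelian subvariety, so composing $a$ with a nontrivial quotient $\Alb(S)\to A'$ would contract $C_1$ to a point. Because $a(S)$ generates $\Alb(S)$, this composite still has positive-dimensional image; passing to its Stein factorization, $C_1$ would then either lie in a fibre of a fibration $S\to Y$ over a curve (forcing $C_1^2\le 0$ by Zariski's lemma) or be contracted by a generically finite morphism of surfaces (forcing $C_1^2<0$ by the negative-definiteness of the exceptional locus). Either alternative contradicts $C_1^2>0$, so $a(C_1)$ generates $\Alb(S)$ and the rank bound $\rk r_C\ge q-1$ follows.

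Statement (i) is then immediate. Tensoring the structure sequence $0\to\OO_S(-C)\to\OO_S\to\OO_C\to 0$ by $K_S$ gives an exact sequence $0\to\OO_S(K_S-C)\to\OO_S(K_S)\to K_S|_C\to 0$, whose global sections identify $H^0(K_S-C)$ with $\ker r_C$. Hence $h^0(K_S-C)=p_g(S)-\rk r_C\le p_g(S)-(q-1)=\chi(S)$, using $\chi(S)=1-q+p_g(S)$.

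For (ii) I would compute $h^0(C|_C)=h^0(\OO_C(C))$ directly on $C$, which is Gorenstein since it is a Cartier divisor on the smooth surface $S$. Riemann--Roch gives $\chi(\OO_C(C))=C^2+\chi(\OO_C)=C^2+1-p_a(C)$, while Serre duality combined with adjunction $\omega_C=(K_S+C)|_C$ yields $h^1(\OO_C(C))=h^0(\omega_C\otimes\OO_C(-C))=h^0(K_S|_C)$. Since $\im r_C\subseteq H^0(K_S|_C)$ we have $h^0(K_S|_C)\ge \rk r_C\ge q-1$, and therefore $h^0(C|_C)=\chi(\OO_C(C))+h^1(\OO_C(C))\ge (C^2+1-p_a(C))+(q-1)=q+C^2-p_a(C)$, as required. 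The only genuinely new input beyond Proposition~\ref{prop:xiao} is thus the reduction to $C_1$ together with the generation statement, which is where the hypothesis $C_1^2>0$ is used.
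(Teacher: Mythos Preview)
Your proof is correct and follows the same overall strategy as the paper: both reduce everything to the bound $\rk r_C\ge\rk r_{C_1}\ge q-1$ obtained from Proposition~\ref{prop:xiao}(i), and then read off (i) from $h^0(K_S-C)=p_g-\rk r_C$ and (ii) from Riemann--Roch plus Serre duality on $C$. The one point where you diverge is the verification that $a(C_1)$ generates $\Alb(S)$: the paper observes that $C_1$, being irreducible with $C_1^2>0$, is nef and big, so $h^1(\OO_S(-C_1))=0$ and hence $H^1(\OO_S)\hookrightarrow H^1(\OO_{C_1})$, which is exactly the generation statement; you instead argue geometrically via Stein factorization of a quotient map $S\to\Alb(S)/B$ and Zariski's lemma/negativity of exceptional loci. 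Both arguments are short and valid; yours avoids invoking a vanishing theorem, while the paper's gives slightly more (the injectivity of $H^1(\OO_S)\to H^1(\OO_{C_1})$, equivalently finiteness of $\ker(\Pic^0(S)\to\Pic^0(C_1))$, which is used elsewhere in the paper). Your computation in (ii) is also essentially the paper's ``Riemann--Roch on $C$'' line, just with the Serre duality step made explicit; note that the 1-connectedness hypothesis is not actually needed for the Riemann--Roch/duality identity $h^0(C|_C)=C^2+h^0(K_S|_C)+1-p_a(C)$ you use.
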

  \begin{proof} (i)  Note that  $C_1$ is nef and big; therefore  $h^1(\OO_S(-C_1))=0$, the map  $H^1(\OO_S)\to H^1(\OO_{C_1})$ is an injection and $a(C_1)$ generates $\Alb(S)$. Since, by Proposition  \ref{prop:xiao} we have $ \rk r_{C_1}\ge q-1$, we obtain $\rk r_{C}\ge q-1$ and so $h^0(K_S-C)\leq \chi(S)$.
  \medskip
  
(ii) Since $C$ is $1$-connected, Riemann--Roch on $C$ gives:
  $$h^0(C|_C)= C^2+h^0(K_S|_C) +1-p_a(C)\ge C^2 + q-p_a(C).$$
   \end{proof}

 \subsection{The Petri map}\label{ssec:petri}

 Let $X$ be a smooth projective variety and let $D$ be an effective divisor on $X$.
 
As a  tool for   studying  the rank or $r_D$  we introduce  the  {\em Petri map}, which,  in analogy with the case of curves, is the map 
$$\beta_{D}\colon H^0(K_X-D)\otimes H^0(D)\to H^0(K_X),$$
induced by cup product. 

The Petri map is strictly related  to the infinitesimal structure of the Brill-Noether loci, as follows.    Let $r=h^0(D)-1$,  let $T$ be the tangent space to $W^r(D,X)$  at $0$ and   let $\alpha\colon H^1(\OO_X)\otimes H^{n-1}(\OO_X)\to H^{n}(\OO_X)$ be the map   induced by cup product. 
Then by Proposition \ref{prop:BNtangent} for all $\sigma\in T\otimes H^{n-1}(\OO_X)$ and $\psi \in H^0(D)\otimes H^0(K_X-D)$ we have
$$\alpha(\sigma)\cup\beta_{D}(\psi)=0,$$ 
namely $V:=\alpha(T\otimes H^{n-1}(\OO_X))\subseteq H^{n}(\OO_X)$ is orthogonal to $\im \beta_D\subseteq H^0(K_X)$.

\begin{prop} \label{prop:xiao-petri}
Let $X$ be a smooth projective variety of dimension $n$ and irregularity $q\ge n$, let $D> 0$ be a divisor of $X$,  let  $r:=h^0(D)-1$ and  let $T$ be the  tangent space to $W^r(D,X)$ at the point $0$. Assume that  $\dim T>0$ and that $X$ has no fibration $f\colon X\to Z$, with $Z$ normal of  Albanese general type and   $0<\dim Z<n$; 
then:
\begin{enumerate}
\item if $h^0(K_X-D)=0$, then $\rk r_D\geq n(q-n)+1$;
\item if $h^0(K_X-D)>0$, then: 
\begin{align*}\rk r_D \geq (n-1)(q-n)+\dim T+r,\ \mbox{if}\  \dim T\le q-n,\\
\rk r_D\geq n(q-n)+1+r, \qquad \quad \mbox{if}\   \dim T\ge q+1-n.
\end{align*}
\end{enumerate}
\end{prop}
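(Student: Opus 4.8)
The plan is to reduce the statement to two separate estimates and to glue them through Serre duality. Write $s$ for a section cutting out $D$. The restriction sequence $0\to\OO_X(K_X-D)\xrightarrow{\cdot s}\OO_X(K_X)\to\OO_D(K_X|_D)\to 0$ identifies $\ker r_D$ with $H^0(K_X-D)$, so that $\rk r_D=p_g(X)-h^0(K_X-D)$. In case (i), where $h^0(K_X-D)=0$, the map $r_D$ is injective and it is enough to bound $p_g(X)$ from below; since $\im\psi_n\subseteq H^0(K_X)$ we have $p_g(X)\ge\rk\psi_n$, and the hypothesis that $X$ has no Albanese general type fibration of intermediate dimension guarantees, through Theorem \ref{thm:cast-cat}, that no $n$ independent $1$-forms wedge to zero, i.e.\ that $\ker\psi_n$ contains no nonzero decomposable tensor. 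The Grassmann estimate (Lemma \ref{lem:grass}) then yields $\rk\psi_n\ge n(q-n)+1$, settling (i) (note that this subcase does not use $T$ at all).

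\textbf{Reduction of case (ii).} Here I would feed two inputs into the duality relation recorded just before the statement, namely that $V:=\alpha(T\otimes H^{n-1}(\OO_X))$ is orthogonal to $\im\beta_D$ under the perfect Serre pairing $H^0(K_X)\times H^n(\OO_X)\to\C$. The first input is a lower bound for $\dim\im\beta_D$: since $X$ is integral, $\beta_D(\omega\otimes t)=\omega\cdot t$ vanishes only when $\omega=0$ or $t=0$, so $\ker\beta_D$ contains no nonzero simple tensor and the Hopf Lemma \ref{lem:hopf} gives $\dim\im\beta_D\ge h^0(K_X-D)+h^0(D)-1=h^0(K_X-D)+r$. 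The second input is a lower bound for $\dim V$. Orthogonality reads $\dim\im\beta_D\le p_g(X)-\dim V$, and combining it with the Hopf bound gives $h^0(K_X-D)+r\le p_g(X)-\dim V$, whence $\rk r_D=p_g(X)-h^0(K_X-D)\ge\dim V+r$. Thus all of (ii) reduces to the two cup-product estimates $\dim V\ge n(q-n)+1$ when $\dim T\ge q-n+1$, and $\dim V\ge(n-1)(q-n)+\dim T$ when $\dim T\le q-n$.

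\textbf{Estimating $\dim V$.} By Hodge theory $\alpha$ is the conjugate of the wedge product $H^0(\Omega^1_X)\otimes H^0(\Omega^{n-1}_X)\to H^0(K_X)$, and $T$ corresponds to a subspace $\overline T\subseteq H^0(\Omega^1_X)$ with $\dim\overline T=\dim T=:t$; hence, writing $U=H^0(\Omega^1_X)$, one has $\dim V\ge\dim\psi_n\bigl(\overline T\wedge\bigwedge^{n-1}U\bigr)$. When $t\ge q-n+1$ every $n$ basis vectors of $U$ must meet $\overline T$, so $\overline T\wedge\bigwedge^{n-1}U=\bigwedge^{n}U$ and $\dim V\ge\rk\psi_n\ge n(q-n)+1$ exactly as in case (i). When $t=1$, say $\overline T=\langle\tau\rangle$, the map $\bigwedge^{n-1}(U/\langle\tau\rangle)\to H^0(K_X)$, $\overline\eta\mapsto\psi_n(\tau\wedge\widetilde\eta)$, is well defined with image $\psi_n(\tau\wedge\bigwedge^{n-1}U)$, and its kernel contains no decomposable tensor (a decomposable kernel element would produce $n$ independent $1$-forms wedging to zero, again excluded by Theorem \ref{thm:cast-cat}); Lemma \ref{lem:grass} gives $\dim V\ge(n-1)(q-n)+1$, the case $t=1$ of the desired bound.

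\textbf{The main obstacle.} The crux is the remaining range $2\le t\le q-n$ of the bound $\dim V\ge(n-1)(q-n)+t$. The naive scheme—pick a basis $\tau_1,\dots,\tau_t$ of $\overline T$ and show that each $\tau_j$ contributes one new dimension to $\psi_n\bigl(\langle\tau_1,\dots,\tau_j\rangle\wedge\bigwedge^{n-1}U\bigr)$—is delicate precisely because passing to the quotient by the subspace already built destroys the ``no decomposable tensor in the kernel'' property on which Lemma \ref{lem:grass} relies, so the one-dimension-per-step gain is not formal. I expect the clean way to organize the estimate is a Grassmann-type induction with engine Lemma \ref{lem:grass} and reduction structure modelled on Proposition \ref{prop:rank}: choosing a $1$-form \emph{outside} $\overline T$, one forms the exact sequence $0\to\OO_X\to\Omega^1_X\to\Omega'\to 0$ on a suitable open set, reduces $(n,q)$ to $(n-1,q-1)$ while keeping $\dim\overline T=t$ fixed and tracking the image of $\overline T$, and aims for a per-step gain of $q-n$ down to the base value $t$ at $n=1$; one must also check that the hypotheses, hence the absence of decomposables in the relevant kernels via Theorem \ref{thm:cast-cat}, persist after restriction. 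Carrying out this bookkeeping is the technical heart of the argument; once $\dim V\ge(n-1)(q-n)+t$ is established, the inequality $\rk r_D\ge\dim V+r$ from the previous step finishes (ii).
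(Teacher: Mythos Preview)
Your overall architecture is the paper's: part (i) via Lemma~\ref{lem:grass} and Theorem~\ref{thm:cast-cat}; in part (ii), the orthogonality $V\perp\im\beta_D$ under Serre duality together with the Hopf bound on $\im\beta_D$, reducing everything to a lower bound on $\dim V$; and your treatment of the two extreme ranges $t=1$ and $t\ge q-n+1$ is correct. The gap is exactly the one you flag yourself: the estimate $\dim V\ge (n-1)(q-n)+t$ for $2\le t\le q-n$. Your proposed induction modelled on Proposition~\ref{prop:rank} is not carried out, and, as you already observe, the ``no decomposable in the kernel'' property does not obviously survive the successive quotients, so it is unclear that this route can be made to close.

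The paper fills the gap not by induction but by a Grassmannian dimension count that handles all $t>0$ uniformly. Let ${\mathbb G}_T\subset{\mathbb G}(n,H^1(\OO_X))$ be the Schubert-type locus of $n$-planes meeting $T$ nontrivially; one has $\dim{\mathbb G}_T=(n-1)(q-n)+t-1$ when $t\le q-n$ and ${\mathbb G}_T={\mathbb G}(n,H^1(\OO_X))$, of dimension $n(q-n)$, when $t\ge q-n+1$. Since by Theorem~\ref{thm:cast-cat} the conjugate of $\psi_n$ kills no nonzero decomposable, the Pl\"ucker image of ${\mathbb G}_T$ misses the center of the linear projection $\pp(\bigwedge^n H^1(\OO_X))\dashrightarrow\pp(H^n(\OO_X))$, so the induced morphism ${\mathbb G}_T\to\pp(V)$ is everywhere defined and finite onto its image. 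Hence $\dim V\ge\dim{\mathbb G}_T+1$, and plugging this into your inequality $\rk r_D\ge\dim V+r$ gives both cases of (ii) at once. The missing idea, then, is to replace the attempted linear-algebra induction on $\overline T\wedge\bigwedge^{n-1}U$ by the geometry of the Schubert variety ${\mathbb G}_T$.
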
 
\begin{proof} 
(i) If $h^0(K_X-D)=0$ then $\rk r_D=h^0(K_X)$. By  Theorem \ref{thm:cast-cat}, under our assumptions the map $\bigwedge^nH^0(\Omega^1_X)\to H^0(K_X)$ does not map any simple tensor to $0$, hence Lemma \ref{lem:grass} gives  $h^0(K_X)\ge n(q-n)+1$.
\smallskip

(ii) Let $\alpha\colon H^1(\OO_X)\otimes H^{n-1}(\OO_X)\to H^{n}(\OO_X)$  be the map induced by cup product. As we have remarked at the beginning of the section, 
$V:=\alpha(T\otimes H^{n-1}(\OO_X))\subseteq H^{n}(\OO_X)$ is orthogonal to $\im \beta_D\subseteq H^0(K_X)$, where $\beta_D$ is the Petri map. Let ${\mathbb G}_T\subseteq {\mathbb G}(n,H^1(\OO_X))$ be the subset consisting of the subspaces that have non trivial  intersection with $T$. Since, as we remarked in (i),  the map $\bigwedge^nH^0(\Omega^1_X)\to H^0(K_X)$ does not map any simple tensor to $0$,  the complex conjugate map $\bigwedge^nH^1(\OO_X)\to H^n(\OO_X)$ induces a morphism ${\mathbb G}_T\to \pp(V)$ which is finite onto its image. It follows that  $\dim V\ge \dim {\mathbb G}_T+1$.
Since, as noticed above,  the space  $V$ is orthogonal to $\im \beta_D$, the   codimension of $\im \beta_D$ is  $\ge \dim  {\mathbb G}_T+1$.

 On the other hand, by Lemma \ref{lem:hopf} the dimension of   $\im \beta_D$  is at least  $h^0(K_X-D)+h^0(D)-1$. Since  $h^0(K_X-D)=p_g-\rk r_D$   and $h^0(D)=r+1$, one obtains  that the codimension of $\im \beta_D$ is $\le \rk r_D-r$.
 So $\rk r_D\ge \dim {\mathbb G}_T+r+1$, which is precisely the statement.
\end{proof}
Arguing as in the proof of Corollary \ref{cor:genere}, one obtains   the following:
\begin{cor}\label{cor:genere2} 
Let $X$ be a smooth projective variety of dimension $n$ and irregularity $q\ge n$ that has no fibration $f\colon X\to Z$, with $Z$ normal of  Albanese general type and   $0<\dim Z<n$. Let $D> 0$ be a divisor of $X$,  let $r:=h^0(D)-1$ and  let $T$ be the  tangent space to $W^r(D,X)$ at the point $0$. Assume that  $r>0$ and $\dim T>0$; then
\begin{enumerate}
\item if $h^0(K_X-D)=0$, then $h^0(K_D)\geq (n+1)(q-n)+r+2$;
\item if $h^0(K_X-D)>0$, then: 
\begin{align*}h^0(K_D)\geq n(q-n)+\dim T+2r+1,\qquad \mbox{if}\  \dim T\le q-n,\\
h^0(K_D)\geq (n+1)(q-n)+2r+2, \qquad \quad \mbox{if}\   \dim T\ge q+1-n.
\end{align*}
\end{enumerate}

  \end{cor}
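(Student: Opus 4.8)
The plan is to deduce Corollary \ref{cor:genere2} from Proposition \ref{prop:xiao-petri} in exactly the same way Corollary \ref{cor:genere} was deduced from Proposition \ref{prop:xiao}, namely by relating the corank of the residue map $\res_D$ to the rank of the restriction map $r_D$. First I would consider the short exact sequence $0\to K_X\to K_X+D\to K_D\to 0$ and take cohomology; the key inequality coming from the residue map is that
\[
h^0(K_D)\ge \rk\res_D + \operatorname{corank}\res_D,
\]
where $\operatorname{corank}\res_D$ equals the rank of the coboundary $\partial\colon H^0(K_D)\to H^1(K_X)$, and by the argument already used in Proposition \ref{prop:xiao}(ii) this corank equals the corank of $r_D$, that is $p_g(X)-\rk r_D$... but more usefully I will mimic Corollary \ref{cor:genere} directly.

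Concretely, I would write $h^0(K_D)\ge \rk\res_D + (q-n+2)$ using the estimate on $\operatorname{corank}\res_D$ that was established inside the proof of Proposition \ref{prop:xiao}(ii) (this part does not use the new Petri hypotheses and applies whenever $a(D)$ generates $\Alb(X)$, which holds since $D$ moves as $r>0$). Then I would bound $\rk\res_D$ from below using the Hopf lemma (Lemma \ref{lem:hopf}) applied to the multiplication map
\[
(\im r_D)\otimes \bigl(H^0(D)/\langle s\rangle\bigr)\lra H^0(K_D),
\]
whose image lies inside $\im\res_D$; this yields $\rk\res_D\ge \rk r_D + r - 1$, exactly as in the proof of Corollary \ref{cor:genere}. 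Combining the two gives $h^0(K_D)\ge \rk r_D + r + (q-n+1)$.

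Finally I would substitute the lower bounds for $\rk r_D$ furnished by Proposition \ref{prop:xiao-petri}. In case (i), $\rk r_D\ge n(q-n)+1$ gives $h^0(K_D)\ge n(q-n)+1 + r + (q-n+1) = (n+1)(q-n)+r+2$, as claimed. In case (ii), the two regimes $\dim T\le q-n$ and $\dim T\ge q+1-n$ produce respectively $\rk r_D\ge (n-1)(q-n)+\dim T+r$ and $\rk r_D\ge n(q-n)+1+r$; feeding each into $h^0(K_D)\ge \rk r_D + r + (q-n+1)$ gives the two stated inequalities $h^0(K_D)\ge n(q-n)+\dim T+2r+1$ and $h^0(K_D)\ge (n+1)(q-n)+2r+2$. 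As in Corollary \ref{cor:genere}, a preliminary semicontinuity step (replacing $D$ by a general member of $|D|$) is needed so that $D$ is not contained in the ramification locus and $a(D)$ generates $\Alb(X)$; I expect the main thing to check is that this generic choice is compatible with the hypothesis $\dim T>0$ and with the Petri-map setup of Proposition \ref{prop:xiao-petri}, since the tangent space $T$ is attached to the specific $D$. The arithmetic itself is routine; the genuine content is entirely in Proposition \ref{prop:xiao-petri}, which I am permitted to assume.
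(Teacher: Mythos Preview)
Your proposal is correct and is exactly the argument the paper intends: the proof of Corollary~\ref{cor:genere} is rerun with the bound on $\rk r_D$ from Proposition~\ref{prop:xiao-petri} replacing that of Proposition~\ref{prop:xiao}(i), and your arithmetic checks out in all three cases.

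Your one worry is unfounded: the Brill--Noether locus $W^r(D,X)$, its tangent space $T$ at $0$, the value $r=h^0(D)-1$, the number $h^0(K_X-D)$, and the Petri map $\beta_D$ all depend only on the line bundle $\OO_X(D)$, not on the particular section cutting out $D$. Hence replacing $D$ by a general member of $|D|$ (to ensure $D$ is irreducible, $a(D)$ generates $\Alb(X)$, and $D$ is not contained in the ramification divisor, so that Proposition~\ref{prop:xiao}(ii) applies) leaves the hypotheses and the bounds of Proposition~\ref{prop:xiao-petri} unchanged.
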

%%%%%%%%%%%%%%%%%

 \section{Brill-Noether theory for singular curves}\label{sec:BN}
 
Here we prove a   generalization to the case of a compact subgroup of the Jacobian of  a reduced connected   curve of the classical results on the Brill-Noether loci of smooth curves. The results of this section are used in section \ref{sec:Cmoves} to prove the analogous result for smooth  irregular surfaces (Thm. \ref{thm:Cmoves}).
  \smallskip
  
  Assume that $C$ is a reduced connected  projective curve with irreducible components $C_1, \dots C_k$; for every $i$, denote by $\nu_i\colon C_i\unu\to C_i$ the normalization map. 
  We refer the reader to  \cite[\S 9.2, 9.3]{neron} for a detailed  description of  the Jacobian $\Pic^0(C)$. 
We just recall here   that $\Pic^0(C)$ is a smooth algebraic group  and that there is an  exact sequence:
$$  0\to G\to \Pic^0(C)\overset{f}{\to} \Pic^0(C_1\unu)\times \dots\times  \Pic^0(C_k\unu)\to 0,$$
  where $G$ is a smooth connected linear algebraic group and $f(\eta)=(\nu_1^*\eta,\dots, \nu_k^*\eta)$.     
   Notice that if $T\subseteq\Pic^0(C)$ is a complete subgroup, then $G\cap T$ is a finite group, and therefore  the induced map $T\to  \Pic^0(C_1\unu)\times \dots \times \Pic^0(C_k\unu)$ has finite kernel. 
  
  Fix $L \in \Pic(C)$, an integer $r\ge 0$  and a complete connected subgroup  $T\subseteq \Pic^0(C)$,  and consider  the  Brill-Noether locus $W_T^r(L,C):=\{\eta\in T\ |\ h^0(L\otimes \eta)\ge r+1\}$.
   
As in the case of a smooth curve $C$, we  define the Brill-Noether number $\rho(t, r, d):=t-(r+1)(p_a(C)-d+r)$, where $d$ is the total degree of $L$ and $t=\dim T$. In  complete analogy with the classical situation, we prove:

\begin{thm}\label{thm:BN} Let $r\ge 0$ be an integer, let  $C$ be a reduced connected projective curve and let $L$ be a line bundle on $C$ of total degree $d$.   If \/ $T\subseteq\Pic^0(C)$ is a complete connected subgroup of dimension $t$  such that  for every component $C_i$ of $C$ the map $T\to \Pic^0(C_i\unu)$ has finite kernel, then: 
\begin{enumerate}
\item if $\rho(t, r, d)\ge 0$, then  $W_T^r(L,C)$ is nonempty;
\item if  $\rho(t, r, d)> 0$, then $W_T^r(L,C)$ is connected, it  generates $T$   and  each of its  components has dimension $\ge\min\{ \rho(t, r, d), t\}$.
\end{enumerate}
 \end{thm}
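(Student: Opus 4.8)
The plan is to prove Theorem \ref{thm:BN} by mimicking the classical Brill--Noether argument (as in \cite{acgh}), but carried out over the compact group $T$ rather than over a whole $\Pic^d(C)$. The starting point is a determinantal description of $W_T^r(L,C)$. Restricting the sheaf $Q$ of Lemma \ref{lem:grothendieck} to $T$, or more directly choosing an effective divisor $E$ on $C$ of large degree so that $H^1(L\otimes\eta\otimes\OO_C(E))=0$ for all $\eta\in T$, one gets a map of vector bundles $\gamma\colon \mathcal A\to \mathcal B$ on $T$ whose degeneracy locus where $\rk\gamma\le (\text{rank})-(r+1)$ is exactly $W_T^r(L,C)$, with the Fitting scheme structure matching the one defined after Lemma \ref{lem:grothendieck}. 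The fibres of $\mathcal A,\mathcal B$ are $H^0((L\otimes\eta)|_E)$ and $H^1(L\otimes\eta)$-type spaces, and a Riemann--Roch count on $C$ (using $p_a(C)$ and $d$) shows that the expected codimension of this degeneracy locus is exactly $(r+1)(p_a(C)-d+r)$, so that the expected dimension is $t-(r+1)(p_a(C)-d+r)=\rho(t,r,d)$.

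\smallskip
\noindent\textbf{Existence (i).} With the determinantal presentation in hand, nonemptiness when $\rho(t,r,d)\ge 0$ should follow from the fact that the cohomology class of the expected degeneracy locus is a nonzero polynomial in the Chern classes of $\mathcal A,\mathcal B$ (a Thom--Porteous / Giambelli--Thom--Kempf--Laksov computation), provided one knows that this class does not vanish in $H^*(T,\Z)$. Since $T$ is a compact complex torus (an abelian variety, being a complete connected subgroup of the smooth group $\Pic^0(C)$) of dimension $t$, its cohomology ring is an exterior algebra and the relevant Chern classes are pulled back, via the map $T\to\prod_i\Pic^0(C_i\unu)$, from the universal setting. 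Here is where the hypothesis that $T\to\Pic^0(C_i\unu)$ has finite kernel for every $i$ enters decisively: it guarantees that the Poincaré bundle, hence the Chern classes $c_1$ of the relevant line bundles, restricts to a nondegenerate class on $T$, so that the Porteous class is genuinely nonzero. I would therefore reduce the positivity of the class to the nonvanishing of a suitable power of a polarization-type class on the abelian variety $T$.

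\smallskip
\noindent\textbf{Connectedness and dimension (ii).} For $\rho>0$ the dimension bound is the standard consequence that every component of a degeneracy locus of the expected type has dimension $\ge\dim T-(\text{expected codimension})=\rho(t,r,d)$ (and trivially $\le t$, giving the $\min$); this is a purely local statement about determinantal loci and needs no genericity. Connectedness is the analogue of the Fulton--Lazarsfeld connectedness theorem for degeneracy loci: once the relevant ample/positivity hypothesis on $\mathcal B^\vee\otimes\mathcal A$ (again supplied by the finite-kernel assumption, which makes the pullback of the theta-type polarization to $T$ ample) is verified, one concludes that $W_T^r$ is connected. The statement that $W_T^r(L,C)$ \emph{generates} $T$ as a group should follow because a proper subtorus $T'\subsetneq T$ could not support a connected degeneracy locus of positive dimension unless the Chern classes degenerated along $T'$, which the finite-kernel hypothesis again forbids; concretely I expect to argue that if $W_T^r$ were contained in a translate of a proper subtorus then restricting $\gamma$ there would drop the expected-codimension count and contradict the dimension bound.

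\smallskip
\noindent\textbf{Main obstacle.} The delicate point is \emph{not} the dimension bound, which is formal, but verifying the positivity needed for both existence and connectedness. In the classical case one works on $\Pic^d(C)$ and the Poincaré bundle automatically has the ampleness properties coming from the theta divisor. Here $T$ is only a subgroup, and the whole content of the finite-kernel hypothesis is to ensure that the restriction to $T$ of the relevant cohomology classes remains positive (so that the Porteous class survives and the Fulton--Lazarsfeld connectedness input applies). Pinning down exactly which ampleness statement one needs on $T$, and deducing it cleanly from ``$T\to\Pic^0(C_i\unu)$ has finite kernel for every $i$'', is the step I expect to require the most care; everything else is an adaptation of the determinantal machinery recalled in \cite{acgh}.
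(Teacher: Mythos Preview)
Your plan is on the right track in recognising that everything hinges on a positivity/ampleness statement supplied by the finite-kernel hypothesis, but it is both less efficient and in places genuinely incomplete compared with the paper's argument.

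\medskip
\noindent\textbf{How the paper proceeds.} The paper does not split existence and connectedness. It sets up the degeneracy presentation $E\to F$ (with $E={\pr_2}_*(\pr_1^*M\otimes\cP)$ for $M=L\otimes H$ sufficiently positive, and $F=\bigoplus_i\cP_{x_i}$ coming from evaluation at points of a divisor in $|H|$), observes that $F$ is a sum of line bundles algebraically equivalent to $\OO_T$, and then proves the single statement that $E^\vee$ is ample. All of (i) and (ii), including ``$W_T^r$ generates $T$'', then follow at once from Theorem~II and Remark~1.9 of Fulton--Lazarsfeld. So the Thom--Porteous class computation you propose for existence is unnecessary: you need ampleness of $\Hom(E,F)$ for connectedness in any case, and once you have it, existence and the dimension bound come for free.

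\medskip
\noindent\textbf{The ampleness argument.} The paper's proof that $E^\vee$ is ample is geometric, not cohomological. On $\pp(E^\vee)$ the evaluation map at a point $x\in C$ gives an effective divisor $I_x$ algebraically equivalent to the tautological class $\xi$; a point $v\in\pp(E^\vee)$ lies in $I_x$ iff the corresponding section $s_v\in H^0(M\otimes p(v))$ vanishes at $x$. Given any irreducible $V\subset\pp(E^\vee)$ of positive dimension, choose a component $C_i$ on which the general $s_v$ does not vanish identically. Either the support of the zero locus of $s_v$ on $C_i$ varies with $v$, in which case a general $x\in C_i$ makes $V\cap I_x$ proper and nonempty; or it is constant, in which case $\nu_i^*(M\otimes p(v))$ is constant on $V$, and the finite-kernel hypothesis on $T\to\Pic^0(C_i^\nu)$ forces $p(V)$ to be a point, so $V$ sits in a single fibre $\pp(H^0(M\otimes\eta_V))$ where $\xi$ is a hyperplane class. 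This is exactly where and how the hypothesis is used; your plan to ``reduce to nonvanishing of a power of a polarization-type class'' would require identifying the Chern classes of the Picard bundle on $T$ for a singular curve, which is not as direct as you suggest and is in any event bypassed by the argument above.

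\medskip
\noindent\textbf{A genuine gap.} Your proposed argument that $W_T^r$ generates $T$ does not work as written. Restricting the map $\gamma$ to a proper subtorus does not change the ranks of the bundles, so it does not ``drop the expected-codimension count''; there is no contradiction with the local dimension bound. In the paper this conclusion is part of what Fulton--Lazarsfeld's theorem delivers once $\Hom(E,F)$ is ample, and you should obtain it the same way rather than by an ad hoc argument.
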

 \begin{proof}
 The proof  follows closely the  proof given  by Fulton and Lazarsfeld in the case of a smooth curve (cf. \cite{FL}, \cite[\S 6.3.B, 7.2]{lazarsfeld}).  
 
 Denote by $\cP$ the restriction to $C\times T$ of a normalized Poincar\'e line bundle  on $C\times \Pic^0(C)$. Let   $H$ be a sufficiently high multiple of an ample line bundle of $C$ and let $M:=L\otimes H$.  Recall that for any product of varieties we denote by $\pr_i$ the projection onto the $i$-th factor;  we define:
$$E:={\pr_2}_*(\pr_1^*M\otimes \cP).$$
By the choice of $H$, $E$ is a vector bundle of rank $d+\deg H+1-p_a(C)$ on $T$ and for every $\eta\in T$ the natural map $E\otimes\C(\eta)\to H^0(M\otimes\eta)$ is an isomorphism and $M\otimes \eta$ is generated by global sections. 

We let $Z=x_1+\dots + x_m\in |H|$ be  a general divisor and we  set $F:={\pr_2}_*(\pr_1^*M|_Z\otimes\cP)$.  The sheaf $F$ is a vector bundle of rank $m=\deg H$ on $\Pic^0(C)$ and 
  the evaluation map $\pr_1^*M\otimes \cP\to \pr_1^*M|_Z\otimes\cP$  induces a  sheaf map $E\to F$. The locus  where this  map drops rank by $r+1$ is  $W^r_T(L,C)$.   
  
   By Theorem II and Remark 1.9 of \cite{FL}, to prove the theorem it suffices  to show  that $\Hom(E,F)$ is an ample vector bundle. 
  We have $F=\oplus_i\cP_{x_i}$, where $\cP_{x_i}$ is (isomorphic to) the restriction of $\cP$ to $\{x_i\}\times T$. Since $\cP$ is the restriction of a normalized  Poincar\'e line bundle, $\cP_{x_i}$ is algebraically equivalent to $\OO_T$.  Hence $\Hom(E,F)=\oplus_{i=1}^n(E^{\vee}\otimes\cP_{x_i})$ is ample if and only if $E^{\vee}$ is ample. 
  
To show the ampleness of $E^{\vee}$ we adapt   the proof of \cite[Thm. 6.3.48]{lazarsfeld}. 
Denote by $\xi$ the linear  equivalence class of the tautological line bundle of  $\pp(E^{\vee})$; we are going to show that for any irreducible positive dimensional subvariety $V$ of  $\pp(E^{\vee})$   the cycle $V\cap \xi$ is represented, up to numerical equivalence, by a proper nonempty subvariety of $V$.

Given a point $x\in C$, the evaluation map $E\to \cP_x$ is surjective, since $M\otimes\eta$ is globally generated for every $\eta\in T$, hence it defines an effective divisor $I_x$ algebraically equivalent to $\xi$.
Denote by $p\colon \pp(E^{\vee})\to T$ the natural projection. A point   $v\in \pp(E^{\vee})$ is determined by  a section $s_v\in H^0(M\otimes p(v))$, and $v\in I_x$ if and only if $s_v(x)=0$. Let $C_i$ be  a component of $C$ such that the general element of $V$ does not vanish identically on $C_i$. If the support of the zero locus of  $s_v$ on $C_i$ varies, then for a general $x\in C_i$ the set $V\cap I_x$ is a proper nonempty subvariety algebraically equivalent to  $V\cap \xi$ and we are done. So assume  that for general $v\in V$   the support of the zero locus of $v$ on $C_i$ is constant: then,  pulling back via $\nu_i\colon C_i\unu\to C_i$, we see that the line bundle  $\nu_i^*(M\otimes p(v))$ stays  constant as $v\in V$ varies. Since the map $T\to \Pic^0(C_i\unu)$ has finite kernel by assumption, $p(V)$ is a point $\eta_V\in T$ and $V\subseteq \pp(H^0(M\otimes\eta_V))$. Since $\dim V>0$ and $\xi$ restricts to the class of a hyperplane of $ \pp(H^0(M\otimes\eta_V))$, the cycle $V\cap \xi$ is represented by a proper nonempty subvariety of $V$ also in this case. This completes the proof.
 \end{proof}
 
\begin{rem} \label{rem:ipotesi} 
The proof of Theorem  \ref{thm:BN} does not extend to the case of a  complete subgroup $T\subseteq \Pic^0(C)$  such  that the the map $T\to \Pic^0(C_i)$ does not have finite kernel for some component $C_i$ of $C$. Indeed, take $C=C_1\cup C_2$, with $C_i$ smooth curves of genus $g_i>0$ meeting transversely at only one point $P$, and $T=\Pic^0(C_1)\subset \Pic^0(C)=\Pic^0(C_1)\times \Pic^0(C_2)$.  Twisting by  $H\otimes \eta$, $\eta\in T$, the exact sequence
$0\to \OO_{C_2}(-P)\to \OO_C\to \OO_{C_1}\to 0$
and taking global sections,  one gets inclusions $$H^0( \OO_{C_2}(H-P))=H^0( \OO_{C_2}(H-P)\otimes\eta)\hookrightarrow H^0( \OO_C(H)\otimes \eta)$$ that sheafify to a vector bundle map 
 $\OO_T\otimes H^0(\OO_{C_2}(H-P))\to E$. So the bundle $E^{\vee}$ is not ample.
 
We do not know whether the statement of Theorem \ref{thm:BN} still holds without this assumption on $T$.
\end{rem}

\section{Brill-Noether theory for curves on irregular surfaces}\label{sec:Cmoves}

Our approach to the study of the Brill-Noether loci $W^r(D, X)$ for an effective divisor $D$ in an $n$-dimensional variety $X$  of maximal Albanese dimension consists in comparing it with  a suitable Brill-Noether locus on the $(n-1)$-dimensional variety $D$. 
Let $i^*\colon \Pic^0(X)\to \Pic^0(D)$ be the map induced by the inclusion $i\colon D\to X$ and denote by $T$ the image of $i^*$. The key observation is the following:

\begin{prop}\label{prop:restriction} Let $X$ be a variety of dimension $n>1$ with $\albdim X=n$ and  without irrational pencils of genus $>1$ and let   $D> 0$ be a divisor of $X$.  Let $Y$ be a positive dimensional irreducible  component of 
$W^r_T(D|_D, D)$; if $\dim Y\ge 2$ or $0\in Y$, then ${i^*}\inv Y$ is a component of $W^r(D,X)$.
\end{prop}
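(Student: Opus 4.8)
The plan is to exploit the relationship between the Brill-Noether locus $W^r(D,X)$ on the ambient variety and the Brill-Noether locus $W^r_T(D|_D,D)$ on the divisor $D$, where $T = \im(i^*)$. The fundamental link between the two is the restriction sequence
\[
0\to \OO_X(D-D)\otimes\eta=\eta \to \OO_X(D)\otimes\eta \to \OO_D(D)\otimes\eta|_D\to 0,
\]
for $\eta\in\Pic^0(X)$. Taking cohomology, $h^0(\OO_X(D)\otimes\eta)$ and $h^0(\OO_D(D)\otimes i^*\eta)$ are controlled by each other up to the term $h^0(\eta)$ and the connecting map into $H^1(\eta)$. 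For $\eta\ne 0$ with $\eta$ not in $V^1(X)$ one has $h^0(\eta)=h^1(\eta)=0$, so the restriction $H^0(\OO_X(D)\otimes\eta)\to H^0(\OO_D(D)\otimes i^*\eta)$ is an isomorphism; this is the mechanism that will force ${i^*}^{-1}Y\subseteq W^r(D,X)$ set-theoretically away from the bad locus.

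\textbf{Using the generic vanishing hypotheses.}
The key structural input is that $X$ has maximal Albanese dimension and no irrational pencil of genus $>1$, so Theorem \ref{thm:GV1} applies: $V^1(X)$ is a proper closed subset with $\dim V^1(X)\le 1$, and $0$ is an isolated point of $V^1(X)$. First I would show the inclusion ${i^*}^{-1}Y\subseteq W^r(D,X)$. Outside the finite set where $\eta\in V^1(X)$, the isomorphism above gives $h^0(\OO_X(D)\otimes\eta)=h^0(\OO_D(D)\otimes i^*\eta)\ge r+1$ whenever $i^*\eta\in Y$; since $W^r(D,X)$ is closed and ${i^*}^{-1}Y$ is irreducible (as $i^*$ has finite kernel, being the inclusion into an abelian variety—one must check $\dim{i^*}^{-1}Y=\dim Y$ and that the fibers are finite), the inclusion extends across the finitely many bad points by continuity. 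The hypotheses ``$\dim Y\ge 2$ or $0\in Y$'' are exactly what guarantee that ${i^*}^{-1}Y$ is not swallowed by the $1$-dimensional locus $V^1(X)$: if $\dim Y\ge 2$ then $\dim {i^*}^{-1}Y\ge 2 > \dim V^1(X)$, while if $0\in Y$ we can use that $0$ is \emph{isolated} in $V^1(X)$ to argue locally.

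\textbf{Promoting the inclusion to a component.}
Having ${i^*}^{-1}Y\subseteq W^r(D,X)$, I must show it is a whole irreducible component, i.e. that no component of $W^r(D,X)$ properly contains it. This is where the infinitesimal analysis of \S\ref{sec:BNdef} enters. The strategy is a dimension count: compare $\dim {i^*}^{-1}Y$ with an upper bound on the dimension of any component of $W^r(D,X)$ passing through a general point $\eta$ of ${i^*}^{-1}Y$. At such $\eta$ one uses Proposition \ref{prop:BNtangent}(ii) to identify the tangent space $T_\eta W^r(D,X)$ with the kernel of the cup-product map $H^1(\OO_X)\to \Hom(H^0(D+\eta),H^1(D+\eta))$, and the corresponding tangent space description on $D$ together with the isomorphism of cohomology groups coming from generic vanishing. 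The aim is to match the two tangent spaces so that $T_\eta W^r(D,X)$ has dimension equal to $\dim {i^*}^{-1}Y$, forcing ${i^*}^{-1}Y$ to be a component.

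\textbf{The main obstacle.}
The hardest part will be the tangent-space comparison in the last step: one must verify that the cup-product obstruction computing $T_\eta W^r(D,X)$ on $X$ corresponds precisely, under the isomorphism $H^0(D+\eta)\cong H^0(D|_D+i^*\eta)$, to the analogous obstruction computing $T_{i^*\eta}W^r(D|_D,D)$ on $D$, pulled back along $i^*$. This requires tracking how the connecting maps in the two restriction sequences interact with the cup products in $H^1(\OO_X)$ versus $H^1(\OO_D)$, and controlling the behavior at the points where the naive isomorphism of cohomology fails (the intersection with $V^1$). The isolatedness of $0$ in $V^1(X)$ and the dimension hypothesis on $Y$ are precisely the tools to sidestep these degenerate loci, but weaving them into a clean component argument is the delicate technical heart of the proof.
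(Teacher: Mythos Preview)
Your setup and use of generic vanishing are correct, but you miss the simple observation that makes the component step immediate, and you replace it with a tangent-space argument that does not quite close.

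The point you overlook is that the isomorphism $H^0(\OO_X(D)\otimes\eta)\cong H^0(\OO_D(D)\otimes i^*\eta)$ for $\eta\in U:=\Pic^0(X)\setminus V^1(X)$ gives an \emph{equality of sets}
\[
W^r(D,X)\cap U \;=\; {i^*}^{-1}W^r_T(D|_D,D)\cap U,
\]
not merely the one-sided inclusion you use. Once you know ${i^*}^{-1}Y\not\subset V^1(X)$ (which is exactly what the hypotheses ``$\dim Y\ge 2$'' or ``$0\in Y$'' guarantee, via Theorem~\ref{thm:GV1}), take any irreducible component $Z$ of $W^r(D,X)$ containing ${i^*}^{-1}Y$. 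Then $Z\cap U$ is dense in $Z$ and is contained in ${i^*}^{-1}W^r_T(D|_D,D)$; hence $i^*(Z)\subset W^r_T(D|_D,D)$ is irreducible and contains $Y$, so $i^*(Z)=Y$ since $Y$ is a component, and therefore $Z={i^*}^{-1}Y$. That is the entire argument in the paper: no infinitesimal analysis is needed.

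Your proposed tangent-space comparison, besides being unnecessary, has a genuine gap. To conclude that ${i^*}^{-1}Y$ is a component from $\dim T_\eta W^r(D,X)=\dim T_{i^*\eta}W^r_T(D|_D,D)$, you would still need $\dim T_{i^*\eta}W^r_T(D|_D,D)\le \dim Y$ at a general point, i.e.\ that $Y$ is generically reduced. Nothing in the hypotheses guarantees this. (There is also a secondary issue: matching the obstruction maps requires comparing $H^1(D+\eta)$ with $H^1((D+\eta)|_D)$, and the restriction map between them is only injective, not surjective, since $h^2(\eta)$ need not vanish.) Finally, your parenthetical claim that $i^*$ has finite kernel is neither justified nor needed; ${i^*}^{-1}Y$ has the right dimension simply because $i^*$ is a group homomorphism onto $T$.
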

\begin{proof} Let $V^1(X)\subset \Pic^0(X)$ the first  Green-Lazarsfeld locus, namely  $V^1(X)=\{\eta\in \Pic^0(X)|h^1(\eta)>0\}$ (see \S \ref{ssec:GV}).

Denote by $U$ the complement of $V^1(X)$ in $\Pic^0(X)$; for $\eta\in U$, the short exact sequence:
$$0\to \eta \to \OO_X(D+\eta)\to (D+\eta)|_D \to 0$$
induces an isomorphism $H^0(\OO_X(D+\eta))\cong H^0((D+\eta)|_D)$. Hence $U\cap W^r(D, X)=U\cap{ i^*}\inv W^r_T(D|_D, D)$ and  to prove the claim it is enough to show that ${i^*}\inv Y\not\subset V^1(X)$. By Theorem \ref{thm:GV1}, if $\dim Y\ge 2$ this follows by the fact that $\dim V^1(X)\le 1$ and  if $0\in Y$ this  follows by the fact that $0$ is an isolated point of $V^1(X)$.
\end{proof}

In the case of surfaces, Proposition \ref{prop:restriction} can be made effective.  

Let $S$ be a surface with $q(S)=q$ and let $C\subset S$ be a curve; we define the Brill-Noether number  $\rho(C,r):=q-(r+1)(p_a(C)-C^2+r)$. For $r=0$ we write simply  $\rho(C)$ for $\rho(C,0)=q+C^2-p_a(C)$.  Recall that by the adjunction formula  $q+C^2-p_a(C)=q-1+\frac{C^2-K_SC} {2}$.

 \begin{thm}\label{thm:Cmoves} Let $r\ge 0$ be an integer. Let $S$ be a surface with irregularity $q>1$ that has no irrational pencil of genus $>1$ and let $C\subset S$ be a reduced   curve such that  $C_i^2>0$ for every  irreducible component  $C_i$ of $C$. 
 \begin{enumerate}
\item If    $\rho(C,r)>1$ or   $\rho(C,r)=1$ and $V^1(S)=\{\eta\in \Pic^0(S)|h^1(\eta)>0\}$ does not generate $\Pic^0(S)$, then $W^r(C,S)$ is nonempty  of dimension $\ge\min\{q,  \rho(C, r)\}$. 

 \item If  $\rho(C)>1$, or  $\rho(C)=1$ and $C$ is not contained in the ramification locus of the Albanese map, 
or $\rho(C)=1$ and $V^1(S)$ does not generate $\Pic^0(S)$,
 then $W(C,S)$ has an irreducible component of dimension  $\ge \min\{q,  \rho(C\})$ containing $0$.
  \end{enumerate}  
  \end{thm}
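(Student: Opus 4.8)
The plan is to reduce the two-dimensional statement to the one-dimensional Brill--Noether existence theorem (Theorem~\ref{thm:BN}) via the restriction principle of Proposition~\ref{prop:restriction}. First I would set $L:=\OO_C(C)=C|_C$ on the curve $C$, let $i\colon C\to S$ be the inclusion and let $T:=\im(i^*\colon\Pic^0(S)\to\Pic^0(C))$. The key numerical observation is that the Brill--Noether number $\rho(C,r)=q-(r+1)(p_a(C)-C^2+r)$ is designed precisely so that it equals $\rho(t,r,d)=t-(r+1)(p_a(C)-d+r)$ for the curve $C$, with $d=\deg(C|_C)=C^2$ and $t=\dim T$; the hypotheses $C_i^2>0$ force each restriction map $T\to\Pic^0(C_i\unu)$ to have finite kernel (as observed in Corollary~\ref{cor:CC}~(i): each $C_i$ is nef and big, so $H^1(\OO_S)\to H^1(\OO_{C_i})$ is injective), which both gives $t=q$ and verifies the technical hypothesis of Theorem~\ref{thm:BN}.

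The main logical engine is then immediate. Applying Theorem~\ref{thm:BN} to $(C,L,T)$ with $\rho(t,r,d)=\rho(C,r)>0$ produces a nonempty $W^r_T(C|_C,C)$ with every component of dimension $\ge\min\{\rho(C,r),q\}$. To feed this into Proposition~\ref{prop:restriction} I must exhibit a component $Y$ of $W^r_T(C|_C,C)$ that either has $\dim Y\ge 2$ or contains $0$, since only then does $(i^*)^{-1}Y$ descend to a genuine component of $W^r(C,S)$. For part~(i), when $\rho(C,r)>1$ the dimension bound guarantees $\dim Y\ge\min\{\rho(C,r),q\}\ge 2$ (using $q>1$), so the first alternative of Proposition~\ref{prop:restriction} applies; I would then transport the dimension estimate through $i^*$, whose fibres are finite in the relevant range, to conclude $\dim W^r(C,S)\ge\min\{q,\rho(C,r)\}$. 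For part~(ii), I would use Theorem~\ref{thm:BN}~(ii), which asserts that $W^r_T$ is \emph{connected} and \emph{generates} $T$; connectedness plus the generation property should force $0$ into the closure, yielding a component through $0$ of the desired dimension.

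The genuinely delicate points are the two boundary cases $\rho(C,r)=1$ and the role of $V^1(S)$. When $\rho=1$ the dimension bound gives only $\dim Y\ge 1$, so the crude alternative ``$\dim Y\ge 2$'' fails and I must instead locate $0\in Y$. Here the hypotheses split: if $V^1(S)$ does not generate $\Pic^0(S)$, I would argue that the one-dimensional component $Y$, being an algebraic subgroup-coset through a generating family, cannot be swallowed by $V^1(S)$, so $(i^*)^{-1}Y\not\subset V^1(S)$ and Proposition~\ref{prop:restriction} still applies via the ``$0\in Y$'' clause after checking $0\in W^r_T$; the generation statement in Theorem~\ref{thm:BN}~(ii) is what rules out the pathology. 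For the extra clause in part~(ii) with $\rho(C)=1$ and $C$ not in the ramification locus, I would bring in Proposition~\ref{prop:dv} and the uniform bound of Proposition~\ref{prop:xiao}~(i) ($\rk r_C\ge q-1$) to control $h^0(K_S|_C)$ and hence $h^0(C|_C)$ directly, showing $0$ lies on a positive-dimensional component.

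I expect the principal obstacle to be the $\rho=1$ analysis: separating the case where $W^r_T(C|_C,C)$ meets $V^1(S)$ badly from the benign case requires combining the Green--Lazarsfeld structure theorem (Theorem~\ref{thm:GV1}: $0$ isolated in $V^1$, $\dim V^1\le 1$) with the generation clause of Theorem~\ref{thm:BN} in a way that is not purely formal, and ensuring that passing through the finite-kernel map $i^*$ preserves both the dimension and the incidence of $0$. The rest of the argument is a clean translation between the numerical invariants on $S$ and on $C$ and an application of the already-established restriction isomorphism off $V^1(S)$.
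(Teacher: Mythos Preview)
Your overall architecture---reduce to the curve $C$ via $i^*$, apply Theorem~\ref{thm:BN} to $T=\im i^*$, then lift back with Proposition~\ref{prop:restriction}---is exactly the paper's, and your treatment of part~(i) when $\rho(C,r)>1$ is correct. There is a minor confusion in the $\rho(C,r)=1$ case of~(i): you try to invoke the ``$0\in Y$'' clause of Proposition~\ref{prop:restriction}, but for part~(i) you do not know $0\in W^r_T$, and neither hypothesis of that Proposition is satisfied when $\dim Y=1$ and $0\notin Y$. The fix is the one you almost write down: since $W^r_T(C|_C,C)$ generates $\Pic^0(S)$ (Theorem~\ref{thm:BN}(ii)) while $V^1(S)$ does not, some component $Y$ lies outside $V^1(S)$, and then one re-runs the \emph{proof} of Proposition~\ref{prop:restriction} (the restriction isomorphism on the complement of $V^1(S)$) rather than its statement.

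The genuine gap is in part~(ii). Your claim that ``connectedness plus the generation property should force $0$ into the closure'' is false: an irreducible curve Abel--Jacobi embedded in its Jacobian via a generic base point is connected and generates, yet does not contain $0$. What you actually need is $h^0(C|_C)>0$, and this requires an independent argument. For the clause ``$\rho(C)=1$ and $C$ not in the ramification locus'' your idea is correct---Corollary~\ref{cor:CC}(ii) gives $h^0(C|_C)\ge q+C^2-p_a(C)=\rho(C)>0$ directly. But for the remaining clauses ($\rho(C)>1$, or $\rho(C)=1$ and $V^1(S)$ does not generate) you offer no mechanism. The paper's device is to feed part~(i) back into~(ii): assume $h^0(C|_C)=0$; Riemann--Roch on $C$ gives $h^0(K_S|_C)=p_a(C)-C^2-1=q-1-\rho(C)<q-1$, hence $h^0(K_S-C)>\chi(S)$. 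Then for every $\eta\in W(C,S)$ one has $h^0(K_S+\eta)\ge h^0(K_S-C)>\chi(S)$, so $-\eta\in V^1(S)$. But part~(i) has already produced $W(C,S)$ with $(-1)^*W(C,S)\not\subset V^1(S)$, a contradiction. This contradiction argument is the missing idea in your proposal.
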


  \begin{proof} We start by observing that  by the Hodge index theorem any two irreducible components of $C$ intersect, hence in particular $C$ is connected. 
  
    Let $C_i$ be a component of $C$ and denote by $C_i\unu$  its normalization; since $C_i^2>0$, by   \cite[Prop. 1.6]{cfm} the map $\Pic^0(S)\to \Pic^0(C_i)$ is an injection. Since $\Pic^0(S)$ is projective and the kernel of $\Pic^0(C_i)\to \Pic^0(C_i\unu)$ is an affine algebraic group, it follows that the map $\Pic^0(S)\to \Pic^0(C_i\unu)$ has finite kernel and we may apply Theorem \ref{thm:BN}.

 \smallskip 
 
  By Theorem \ref{thm:BN}, if $\rho(C,r)>0$ then  $W^r_{\Pic^0(S)}(C|_C,C)$ is nonempty, it generates $\Pic^0(S)$ and all its components have dimension $\ge \min\{q,  \rho(C, r)\}$. Claim (i) follows directly by Proposition \ref{prop:restriction} if $\rho(C,r)>1$.  If  $\rho(C,r)=1$ and $V^1(S)$ does not generate $\Pic^0(S)$,  then there exist a positive dimensional component $Y$ of $W^r_{\Pic^0(S)}(C|_C,C)$ not contained in $V^1(S)$  and arguing as   in the proof of Proposition \ref{prop:restriction} one shows that $Y$ is a component of $W^r(C,S)$.
  
 \smallskip 
 
 By Proposition \ref{prop:restriction} to prove claim (ii)  it is enough to show that $0\in W_{\Pic^0(S)}(C|_C,C)$, namely that $h^0(C|_C)>0$.

 If $\rho(C)=1$ and $C$ is not contained in the ramification locus of the Albanese map of $S$,  this follows by   Corollary \ref{cor:CC}.

Otherwise assume that  $\rho(C)> 1$ or $\rho(C)=1$ and $V^1(S)$ does not generate $\Pic^0(S)$. Then   by claim (i), $(-1)^*W(C,S)$ has dimension $\ge \min\{q,  \rho(C)\}$.   As previously we conclude that the hypotheses $\rho(C)> 1$ or $\rho(C)=1$ and $V^1(S)$ does not generate $\Pic^0(S)$ also imply that
$(-1)^*W(C,S)$ is not contained in  $V^1(S)$.

Assume for contradiction that $h^0(C|_C)=0$. Then the Riemann-Roch theorem on $C$ gives $h^0(K_S|_C)=p_a(C)-C^2-1$. Since $p_a(C)-C^2-1=q-1-\rho(C)$, one obtains  $h^0(K_S|_C)<q-1$ and thus  $h^0(K_S-C)>\chi(S)$.  

 For  every $\eta\in W(C,S)$ we have $h^0(K_S+\eta)\ge h^0(K_S+\eta-(C+\eta))=h^0(K_S-C)>\chi(S)$, hence $-\eta\in V^1(S)$,  a contradiction. This completes the proof. 
\end{proof}

 \begin{rem}\label{rem:mild}  There are plenty of  irregular surfaces without irrational pencils,  for instance complete intersections in  abelian varieties and symmetric products of curves (cf. \cite[\S 2]{survey}); indeed such surfaces  can be regarded in some sense as ``the general case''.
 
  Note that if $S$ has no irrational pencil of genus $>1$ and   $\Alb(S)$ is not isogenous to a product of elliptic curves, then the assumption that $V^1(S)$ does not generate $\Pic^0(S)$ is verified, since by Theorem \ref{thm:GV1}  the positive dimensional components of $V^1(S)$ are  elliptic curves. 
In Example \ref{ex:V1} we describe  a  surface without irrational pencils of genus $>1$ such that $V^1(S)$ generates $\Pic^0(S)$. 
  
  Furthermore  the inequalities of Theorem \ref{thm:Cmoves} are sharp: see Example \ref{ex:sym}. 
  
   \end{rem}

\section{Applications to  curves on surfaces of maximal Albanese dimension}\label{sec:applications}

\subsection{Curves that do not move  in a linear series}
Here we apply the results of the previous sections  to  curves $C$ with $h^0(C)=1$  on a  surface of general type $S$. 

The cohomology sequence associated to the restriction sequence for such a curve $C$ gives an exact sequence: 
$$0\to H^0(C|_C)\lra H^1(\OO_S)\overset{\cup s}{\lra} H^1(\OO_S(C)),$$
where  $s\in H^0(\OO_S(C))$ is a nonzero section vanishing on $C$. Hence by Proposition \ref{prop:BNtangent}, the space $H^0(C|_C)$ is naturally isomorphic to the tangent space to $W(C,S)$. This remark, together with Proposition \ref{prop:xiao-petri}, gives the    following:
\begin{lem}\label{prop:ineq1}
 Let $S$ be a surface of general type with irregularity $q>0$  that has no irrational pencil of genus $>1$  and let $C\subset S$ be a $1$-connected curve with $h^0(C)=1$. Then one of the following occurs:
\begin{enumerate}
\item $0\in W(C,S)$ is an isolated point (with reduced structure);
\item$0<h^0(C|_C)<q$ and  $C^2+2q-4\le K_SC$;
\item $h^0(C|_C)=q$ and  $C^2+2q-6\le K_SC$.
\end{enumerate}

\end{lem}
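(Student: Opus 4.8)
The plan is to translate the geometric hypotheses into the setup of Proposition~\ref{prop:xiao-petri} and then perform a case analysis driven by the value of $\dim T$, where $T$ is the tangent space to $W(C,S)$ at $0$. First I would record the key identification noted just before the statement: since $h^0(C)=1$, the cohomology sequence associated to the restriction sequence $0\to\OO_S\to\OO_S(C)\to\OO_C(C)\to 0$ gives $H^0(C|_C)\iso\ker\bigl(H^1(\OO_S)\overset{\cup s}{\to}H^1(\OO_S(C))\bigr)$, and by Proposition~\ref{prop:BNtangent}(ii) this kernel is exactly $T$. Thus $\dim T=h^0(C|_C)$, and case~(i) of the Lemma is precisely the case $\dim T=0$, in which Proposition~\ref{prop:dv} (together with Proposition~\ref{prop:BNtangent}) tells us that $0$ is an isolated reduced point of $W(C,S)$. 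So from now on I would assume $\dim T>0$ and aim to prove the numerical inequalities of cases~(ii) and~(iii).

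Next I would apply Proposition~\ref{prop:xiao-petri} with $n=2$, $D=C$, and $r=h^0(C)-1=0$. The hypotheses match: $S$ has no irrational pencil of genus $>1$, which in dimension $2$ is exactly the condition that $S$ has no fibration onto a normal variety $Z$ of Albanese general type with $0<\dim Z<n=2$ (since such a $Z$ would be a curve of genus $>1$); and $q=q(S)>0$ gives $q\ge n$ only if $q\ge 2$, so I would need to check the boundary case $q=1$ separately or note that the relevant inequalities hold trivially there. For $q\ge 2$, Proposition~\ref{prop:xiao-petri} with $r=0$ yields, when $h^0(K_S-C)>0$,
\begin{align*}
\rk r_C\ge (n-1)(q-n)+\dim T=(q-2)+\dim T,&\quad\text{if }\dim T\le q-n=q-2,\\
\rk r_C\ge n(q-n)+1=2(q-2)+1,&\quad\text{if }\dim T\ge q+1-n=q-1.
\end{align*}
The remaining value $\dim T=q-1$ falls in the first regime's boundary; I would handle it by whichever bound applies and track it carefully, since cases~(ii) and~(iii) of the Lemma are distinguished precisely by $\dim T<q$ versus $\dim T=q$.

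Then the final step is to convert a lower bound on $\rk r_C$ into the stated upper bounds on $K_SC$. The mechanism is Riemann--Roch on the $1$-connected curve $C$: since $\rk r_C=h^0(K_S|_C)$ up to the contribution of $H^0(C|_C)$, and $h^0(K_S|_C)=\chi(K_S|_C)+h^1(K_S|_C)$, adjunction gives $\deg(K_S|_C)=K_SC+C^2$ and $\chi(K_S|_C)=K_SC+C^2-(p_a(C)-1)=K_SC+C^2+1-p_a(C)$. Feeding the lower bound for $h^0(K_S|_C)=\rk r_C$ into this and using $p_a(C)=1+\tfrac{1}{2}(K_SC+C^2)$ should rearrange into the advertised inequalities $C^2+2q-4\le K_SC$ (case~ii) and $C^2+2q-6\le K_SC$ (case~iii). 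The main obstacle I anticipate is bookkeeping: matching the two regimes of Proposition~\ref{prop:xiao-petri} to the dichotomy $0<\dim T<q$ versus $\dim T=q$ of the Lemma, and being careful about the subcase $h^0(K_S-C)=0$, where Proposition~\ref{prop:xiao-petri}(i) gives the stronger $\rk r_C\ge n(q-n)+1$ and one must check this is consistent with (and implies) the claimed bounds. I would also need to verify that the edge value $\dim T=q$ cannot produce an even weaker inequality than case~(iii) states, which comes down to the fact that $\dim T=q=\dim H^1(\OO_S)$ forces $T=H^1(\OO_S)$ and hence $r_C$ to be highly degenerate in a controlled way.
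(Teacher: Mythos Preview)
Your approach is exactly the paper's: identify $T$ with $H^0(C|_C)$, dispatch case~(i) as $\dim T=0$, and for $\dim T>0$ apply Proposition~\ref{prop:xiao-petri} and convert via Riemann--Roch on $C$.

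Two slips need fixing. First, the reference to Proposition~\ref{prop:dv} in case~(i) is misplaced---that proposition concerns subdivisors of the ramification locus and does not apply here; you only need that $\dim T=0$ means the Zariski tangent space at $0$ vanishes, which already says $0$ is an isolated reduced point. Second, and more seriously, $\deg(K_S|_C)=K_SC$, not $K_SC+C^2$ (the latter is $\deg K_C=(K_S+C)C$). With the correct degree, Riemann--Roch on $C$ together with Serre duality $h^1(K_S|_C)=h^0(\omega_C\otimes(K_S|_C)^{-1})=h^0(C|_C)$ gives the paper's identity
\[
h^0(K_S|_C)=h^0(C|_C)+\frac{K_SC-C^2}{2},
\]
and then feeding in the bounds from Proposition~\ref{prop:xiao-petri} (noting only $\rk r_C\le h^0(K_S|_C)$, not equality) yields the stated inequalities directly: the $h^0(C|_C)$ terms cancel in case~(ii) and leave exactly $q-3$ in case~(iii). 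Your worries about the regime-matching ($\dim T=q-1$ gives $2q-3=q-2+(q-1)$, so both formulations agree), the subcase $h^0(K_S-C)=0$, and the edge value $\dim T=q$ all resolve cleanly once you carry out the arithmetic with the correct degree.
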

\begin{proof} As we observed above, the tangent space to $W(C,S)$ has dimension equal to $h^0(C|_C)$, therefore case (i) occurs for $h^0(C|_C)=0$. If $h^0(C|_C)>0$, then we can apply Proposition \ref{prop:xiao-petri}, which gives 
$h^0(K_S|_C)\ge q-2+h^0(C|_C)$ if $h^0(C|_C)<q$ and $h^0(K_S|_C)\ge 2q-3$ if $h^0(C|_C)=q$.  By Riemann-Roch and by the adjunction formula, we have:
 $$h^0(K_S|_C)=h^0(C|_C)+K_SC+1-p_a(C)= h^0(C|_C)+\frac{K_SC-C^2}{2},$$
and statements (ii) and (iii) follow immediately by plugging  this expression in the above inequalities.
\end{proof}
\begin{rem} The inequality (ii) of Lemma   \ref{prop:ineq1}  is sharp (cf. Example \ref{ex:sym}).
Using the adjunction formula it can be rewritten as:
$$C^2\le  (p_a(C)-q)+1,$$
or, equivalently, $\rho(C)\le 1$.
\end{rem}

In the situation of Lemma \ref{prop:ineq1} (i) we can also find a lower bound for $K_SC$ using the results of Section \ref{sec:Cmoves}.
\begin{prop}\label{prop:ineq2} Let $S$ be a surface of general type with irregularity $q>1 $  that has no irrational pencil of genus $>1$  and let $C\subset S$ be a  curve with $h^0(C)=1$ and  $h^0(C|_C)=0$. Assume that   $C$ is connected and   reduced and  that every irreducible component $C_i$ of $ C$ satisfies   $C_i^2>0$; then:
$$C^2+2q-4\le K_SC,$$
or, equivalently, $C^2\le (p_a(C)-q)+1$.

Furthermore  if equality occurs then $V^1(S)$ generates $\Pic^0(S)$ and  $C$ is contained in the ramification locus of the Albanese map.

\end{prop}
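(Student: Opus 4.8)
The plan is to reduce everything to Theorem \ref{thm:Cmoves}(ii), read by contraposition, and to use the hypothesis $h^0(C|_C)=0$ only through its infinitesimal meaning. First I would record the two translations that make the statement usable. By the adjunction formula $K_SC=2p_a(C)-C^2-2$, so the asserted inequality $C^2+2q-4\le K_SC$ is equivalent to $C^2+q-1\le p_a(C)$, i.e.\ to $\rho(C)=q+C^2-p_a(C)\le 1$; since $q$, $C^2$ and $p_a(C)$ are integers, $\rho(C)$ is an integer, so $\rho(C)>1$ means $\rho(C)\ge 2$. Next, as recalled at the beginning of the subsection, for a curve with $h^0(C)=1$ the restriction sequence together with Proposition \ref{prop:BNtangent} identifies $H^0(C|_C)$ with the Zariski tangent space to $W(C,S)$ at $0$ (this uses $h^0(C)=1$, which guarantees $0\notin W^1(C,S)$). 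Hence the hypothesis $h^0(C|_C)=0$ says exactly that $0$ is a reduced isolated point of $W(C,S)$, so $W(C,S)$ has no positive-dimensional component passing through $0$.

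With these in hand the inequality is immediate. Suppose for contradiction that $\rho(C)\ge 2$. The hypotheses here match those of Theorem \ref{thm:Cmoves} ($q>1$, no irrational pencil of genus $>1$, $C$ reduced with every component of positive self-intersection), so Theorem \ref{thm:Cmoves}(ii) applies with $r=0$ and produces an irreducible component of $W(C,S)$ of dimension $\ge\min\{q,\rho(C)\}$ containing $0$. Since $q>1$ and $\rho(C)\ge 2$, this dimension is at least $2>0$, contradicting the isolation of $0$ established above. Therefore $\rho(C)\le 1$, which is the desired inequality.

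The equality case is handled by the same mechanism, reading off the contrapositive of the three alternative hypotheses of Theorem \ref{thm:Cmoves}(ii). Assume $\rho(C)=1$. If $C$ were not contained in the ramification locus of the Albanese map, the second alternative of Theorem \ref{thm:Cmoves}(ii) would again yield a positive-dimensional component of $W(C,S)$ through $0$, a contradiction; hence $C$ lies in the ramification locus. Likewise, if $V^1(S)$ did not generate $\Pic^0(S)$, the third alternative would force a positive-dimensional component through $0$, again impossible; hence $V^1(S)$ generates $\Pic^0(S)$. This gives both assertions of the \emph{furthermore} clause.

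The argument is short because the real content sits in Theorem \ref{thm:Cmoves}; the only points that require care are bookkeeping. The main thing to get right is the logical negation in the equality case: one must check that each of the two extra conclusions corresponds to the failure of exactly one of the three sufficient conditions of Theorem \ref{thm:Cmoves}(ii), and that the remaining condition $\rho(C)>1$ is already excluded by $\rho(C)=1$. I would also double-check that the dimension bound $\min\{q,\rho(C)\}$ is genuinely positive in the inequality step (which relies on $q>1$ together with the integrality of $\rho(C)$), since that is precisely what turns Theorem \ref{thm:Cmoves}(ii) into a contradiction with the vanishing of the tangent space.
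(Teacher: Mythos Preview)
Your proof is correct and follows essentially the same approach as the paper: both identify $h^0(C|_C)=0$ with the vanishing of the tangent space to $W(C,S)$ at $0$ via Proposition \ref{prop:BNtangent}, then apply Theorem \ref{thm:Cmoves}(ii) by contraposition to conclude $\rho(C)\le 1$ and to handle the equality case. Your write-up is simply a more detailed version of the paper's terse two-line argument.
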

 
\begin{proof} Since $h^0(C)=1$ and  $h^0(C|_C)=0$,  $0\in W(C,S)$ is an isolated point. So 
 by Theorem \ref{thm:Cmoves} (ii), $\rho(C)\leq 1$, i.e.  $q+C^2-p_a(C)\leq 1$ and this last inequality can be written $C^2+2q-4\le K_SC$.
 
The last assertion is also an immediate consequence of  Theorem \ref{thm:Cmoves} (ii).
   \end{proof}

 As  immediate  consequences of the two above propositions we obtain:
 \begin{cor}\label{cor:ineq3} Let $S$ be a surface of general type with irregularity $q>1 $  that has no irrational pencil of genus $>1$  and let $C\subset S$ be a  curve with $h^0(C)=1$. Assume that   $C$ is connected and   reduced and that   every irreducible component $C_i\subseteq C$ satisfies   $C_i^2>0$;  then:
 
$$C^2+2q-6\le K_SC,$$
or, equivalently, $C^2\le (p_a(C)-q)+2$.

Furthermore, if equality holds then $h^0(C|_C)=q$.
 \end{cor}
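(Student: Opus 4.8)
I want to prove the final Corollary \ref{cor:ineq3}: under the stated hypotheses on $S$ and $C$, one has $C^2+2q-6\le K_SC$ (equivalently $C^2\le (p_a(C)-q)+2$), with the refinement that equality forces $h^0(C|_C)=q$.

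The plan is to run a case analysis according to the value of $h^0(C|_C)$, invoking the trichotomy of Lemma \ref{prop:ineq1}. Since $C$ is $1$-connected (indeed by the Hodge index argument in the proof of Theorem \ref{thm:Cmoves}, any two components meet, so $C$ is connected, and I would want to confirm $1$-connectedness is available—each $C_i$ has $C_i^2>0$, so the curve is $1$-connected), Lemma \ref{prop:ineq1} applies and gives exactly three possibilities:
\begin{enumerate}
\item $h^0(C|_C)=0$, so $0\in W(C,S)$ is an isolated reduced point;
\item $0<h^0(C|_C)<q$ and $C^2+2q-4\le K_SC$;
\item $h^0(C|_C)=q$ and $C^2+2q-6\le K_SC$.
\end{enumerate}

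In cases (ii) and (iii) the desired inequality is immediate: case (iii) gives $C^2+2q-6\le K_SC$ on the nose, and case (ii) gives the even stronger $C^2+2q-4\le K_SC$, which a fortiori implies $C^2+2q-6\le K_SC$. So the only work is case (i), where $h^0(C|_C)=0$. Here I invoke Proposition \ref{prop:ineq2}, whose hypotheses are exactly the standing ones of the corollary together with $h^0(C)=1$ and $h^0(C|_C)=0$: it yields the sharper bound $C^2+2q-4\le K_SC$, again implying the claimed $C^2+2q-6\le K_SC$. Thus in every case the inequality holds, and the equivalent reformulation $C^2\le (p_a(C)-q)+2$ follows from the adjunction formula $p_a(C)=(K_SC+C^2)/2+1$ exactly as in the remark after Lemma \ref{prop:ineq1}.

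For the final assertion about equality, I observe that equality $C^2+2q-6=K_SC$ rules out cases (i) and (ii): in those cases Propositions \ref{prop:ineq2} and Lemma \ref{prop:ineq1}(ii) give the strictly stronger inequality $C^2+2q-4\le K_SC$, which is incompatible with $C^2+2q-6=K_SC$. Hence equality can only arise in case (iii), which by Lemma \ref{prop:ineq1} is precisely the case $h^0(C|_C)=q$. I do not expect a serious obstacle here, since the corollary is essentially a bookkeeping combination of the two preceding results; the only point requiring care is checking that the hypotheses of Lemma \ref{prop:ineq1} and Proposition \ref{prop:ineq2} are genuinely met (in particular $1$-connectedness of $C$ and the exclusion of cases in the equality discussion), but these follow directly from the assumptions $C_i^2>0$ and the Hodge index theorem.
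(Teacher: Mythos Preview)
Your argument is correct and is exactly the approach the paper intends: the corollary is stated as an ``immediate consequence'' of Lemma~\ref{prop:ineq1} and Proposition~\ref{prop:ineq2}, and your case split on $h^0(C|_C)$ together with the equality analysis spells this out precisely. The only point worth tightening is the $1$-connectedness check, but your sketch is right: for distinct components $C_i,C_j$ with $C_i^2>0$ and $C_j^2>0$, the Hodge index theorem forces $C_iC_j>0$, so any decomposition $C=A+B$ with $A,B>0$ has $AB\ge 1$.
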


 \begin{cor}\label{lem:fixed}
 Let $S$ be a surface of general type with with irregularity $q>1 $  that has no irrational pencil of genus $>1$  and let $C$ be an irreducible component of the fixed part of $|K_S|$ such that $C^2>0$. Then:
$$CK_S\ge C^2+2q-4.$$
 \end{cor}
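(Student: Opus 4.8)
The plan is to deduce this as a direct special case of Corollary \ref{cor:ineq3}. Corollary \ref{cor:ineq3} requires the curve $C$ to be connected, reduced, with $h^0(C)=1$ and every irreducible component having positive self-intersection. Here $C$ is given as a single irreducible component of the fixed part of $|K_S|$ with $C^2>0$; since $C$ is irreducible it is automatically connected, reduced, and its only component is $C$ itself, which satisfies $C^2>0$ by hypothesis. So the only genuine point to verify is that $h^0(C)=1$, i.e.\ that $C$ does not move in a linear system.

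First I would argue that $h^0(C)=1$. Because $C$ is (contained in) the fixed part of $|K_S|$, one has $h^0(K_S-C)=h^0(K_S)=p_g$. Moreover $C$ sits inside the ramification divisor $R$ of the Albanese map (the fixed part of $|K_S|$ is contained in $R$, as used in Corollary \ref{cor:fixed-rigid}). Applying Corollary \ref{cor:fixed-rigid} with $Z=C$ — valid since $S$ has maximal Albanese dimension, which holds because $q>1$ and $S$ has no irrational pencil of genus $>1$ (cf.\ the remark after the definition of irregular fibration in \S\ref{ssec:alb}) — gives both $h^0(C|_C)=0$ and that $C$ does not move, so in particular $h^0(C)=1$.

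With $h^0(C)=1$ established and the remaining hypotheses (irreducible, hence connected and reduced; $C^2>0$) immediate, Corollary \ref{cor:ineq3} applies and yields exactly
$$C^2+2q-6\le K_SC,$$
which is the stated inequality $CK_S\ge C^2+2q-4$ — wait, I should be careful with the constant: Corollary \ref{cor:ineq3} gives $C^2+2q-6\le K_SC$, whereas the target is $CK_S\ge C^2+2q-4$, which is a \emph{stronger} inequality. The main obstacle, therefore, is that Corollary \ref{cor:ineq3} alone is not sharp enough by $2$.

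To close this gap I would instead invoke Proposition \ref{prop:ineq2} rather than Corollary \ref{cor:ineq3}. Having shown via Corollary \ref{cor:fixed-rigid} that $h^0(C|_C)=0$ (and $h^0(C)=1$), and with $C$ connected, reduced, all components of positive self-intersection, Proposition \ref{prop:ineq2} applies directly and gives precisely
$$C^2+2q-4\le K_SC,$$
i.e.\ $CK_S\ge C^2+2q-4$, as required. The essential input is thus that membership in the fixed part of $|K_S|$ forces $C\subseteq R$ and hence, by Proposition \ref{prop:dv}, the vanishing $h^0(C|_C)=0$, placing us in case (i) of Lemma \ref{prop:ineq1} where the stronger bound of Proposition \ref{prop:ineq2} (rather than the weaker bound of Corollary \ref{cor:ineq3} allowing $h^0(C|_C)=q$) is available.
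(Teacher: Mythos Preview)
Your final argument is correct and matches the paper's proof exactly: one checks $h^0(C)=1$ (since $C$ lies in the fixed part of $|K_S|$) and $h^0(C|_C)=0$ via Corollary \ref{cor:fixed-rigid}, then applies Proposition \ref{prop:ineq2}. The initial detour through Corollary \ref{cor:ineq3} was unnecessary, but you correctly identified that it loses a constant of $2$ and switched to Proposition \ref{prop:ineq2}, which is precisely what the paper does.
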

\begin{proof} We have $h^0(C)=1$ by assumption and $h^0(C|_C)=0$ by  Corollary \ref{cor:fixed-rigid}. Hence the required inequality follows by Proposition \ref{prop:ineq2}.
\end{proof}

 \bigskip 
 
In \cite{symm} we have characterized  surfaces $S$ of irregularity $q>1$ containing a curve $C$ such that $C^2>0$  and $p_a(C)=q$ (i.e., the smallest possible  value). 
By \cite{xiaoirreg} (cf. also  Corollary \ref{cor:genere}), any  irreducible curve with $h^0(C)\ge 2$ must satisfy $p_a(C)\geq 2q-1$.   
We know no example of a curve with $C^2>0$ and $q<p_a(C)<2q-1$. The next result gives some  information on this case:

\begin{cor} \label{cor:q+1}Let $S$ be a surface of general type with irregularity $q\ge 3$  that has no irregular pencil of genus $>1$  and let $C\subset S$ be an irreducible curve such that $C^2>0$ and $p_a(C)\le 2q-2$. Then:
\begin{enumerate}
\item   $C^2\le (p_a(C)-q)+1$;

\item the codimension of the tangent space at $0$ to  $W(C,S)$  is $\ge(3q-p_a(C)-3)/2\ge (q-1)/2$. \end{enumerate}

\end{cor}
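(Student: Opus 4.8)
The plan is to prove part (i) first and then to deduce part (ii) from it together with Clifford's theorem applied to the line bundle $C|_C$ on $C$.

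For part (i), I would begin by ruling out $h^0(C)\ge 2$: by Corollary \ref{cor:genere} (equivalently, the remark preceding the statement) an irreducible curve with $h^0(C)\ge 2$ satisfies $p_a(C)\ge 2q-1$, contradicting $p_a(C)\le 2q-2$. Hence $h^0(C)=1$, and as $C$ is irreducible it is integral and $1$-connected, so Corollary \ref{cor:ineq3} applies and yields $C^2\le (p_a(C)-q)+2$, i.e. $\rho(C)\le 2$, with equality only if $h^0(C|_C)=q$. It therefore remains to exclude the extremal case $\rho(C)=2$, $h^0(C|_C)=q$.

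This exclusion is the crux of the argument, and I would carry it out with Clifford's theorem for the integral Gorenstein curve $C$. By adjunction $K_C=(K_S+C)|_C$, so duality on $C$ gives $h^1(C|_C)=h^0(K_C-C|_C)=h^0(K_S|_C)$, and a short Riemann--Roch computation shows that in the extremal case this equals $2q-3>0$; thus $C|_C$ is special and effective. Clifford's inequality then gives $q=h^0(C|_C)\le \tfrac12 C^2+1=\tfrac12(p_a(C)-q+2)+1$, forcing $p_a(C)\ge 3q-4$, which together with $p_a(C)\le 2q-2$ gives $q\le 2$, contradicting $q\ge 3$. Hence $\rho(C)\le 1$, which is exactly part (i).

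For part (ii), recall that by the computation at the beginning of \S\ref{sec:applications} the tangent space to $W(C,S)$ at $0$ is canonically $H^0(C|_C)$, so its codimension in $\Pic^0(S)$ is $q-t$ with $t:=h^0(C|_C)$. If $t=0$ the bound is immediate, since part (i) gives $p_a(C)\ge q$. If $t>0$, part (i) yields $p_a(C)-1-C^2\ge q-2>0$, so $h^1(C|_C)=h^0(K_S|_C)=t+p_a(C)-1-C^2>0$ and $C|_C$ is again special; Clifford gives $t\le \tfrac12 C^2+1$, i.e. $C^2\ge 2t-2$. Combining this with $p_a(C)\ge q-1+C^2$ from part (i) produces $p_a(C)\ge q+2t-3$, whence $t\le\tfrac12(p_a(C)-q+3)$ and the codimension $q-t$ is $\ge \tfrac12(3q-p_a(C)-3)$; the final estimate $\ge (q-1)/2$ is then just the hypothesis $p_a(C)\le 2q-2$. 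The single idea making both parts work is the recognition, via adjunction, that $C|_C$ is special, which is precisely what opens the door to Clifford's theorem.
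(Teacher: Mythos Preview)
Your proof is correct and follows the paper's overall route: establish $h^0(C)=1$ via Corollary~\ref{cor:genere}, invoke Corollary~\ref{cor:ineq3}, rule out the extremal case to obtain~(i), and then apply Clifford for~(ii). The one substantive difference lies in how the extremal case is excluded. The paper shows directly that $w:=h^0(C|_C)<q$: assuming $w=q$, Proposition~\ref{prop:xiao-petri} gives $h^0(K_S|_C)\ge 2q-3$, and the Hopf bound $h^0(K_S|_C)+w\le p_a(C)+1$ then forces $p_a(C)\ge 3q-4$. You instead restrict to the equality case $\rho(C)=2$ (which by Corollary~\ref{cor:ineq3} forces $w=q$ and pins down $C^2$), compute $h^1(C|_C)=2q-3$ from Riemann--Roch, and reach the same contradiction via Clifford. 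Your version is a little more economical, since the single tool Clifford handles both (i) and (ii) and the second direct appeal to Proposition~\ref{prop:xiao-petri} is avoided; on the other hand, the paper's argument yields the slightly stronger intermediate fact $w<q$ without assuming $\rho(C)=2$. Note that Corollary~\ref{cor:ineq3} itself already rests on Proposition~\ref{prop:xiao-petri} through Lemma~\ref{prop:ineq1}, so neither proof is genuinely independent of the Petri-map estimate.
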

\begin{proof}  Since by Corollary \ref{cor:genere}  (cf. also \cite{xiaoirreg})  we have $h^0(C)=1$, 
 by Proposition \ref{prop:BNtangent}  the tangent space to $W(C,S)$ at $0$ has dimension $w:=h^0(C|_C)$.  
 Note that by Lemma \ref{lem:hopf}  we have   $h^0(K_S|_C)+h^0(C|_C)\le p_a(C)+1<2q$. 
 
  Now observe  that $w<q$.  In fact, if $w=q$ then, by Proposition \ref{prop:xiao-petri},  one has $h^0(K_S|_C)\geq 2q-3$. Since $p_a(C)\geq h^0(K_S|_C)+h^0(C|_C)-1$ we obtain $p_a(C)\geq 3q-4$,    against the assumptions  $p_a(C)\le 2q-2$ and $q\geq 3$. So (i) follows from  Corollary \ref{cor:ineq3}.
  
 Now  Clifford's theorem gives $2w-2\le C^2$. Since $p_a(C)\leq 2q-2$, from (i) we obtain  $w\leq (p_a(C)-q+3)/2\le  (q+1)/2$. Statement (ii) then follows since $w$ is the dimension of the tangent space to $W(C,S)$ at $0$. 
\end{proof}

\subsection{The fixed part of the paracanonical system}\label{ssec:para}

Let $S$ be a smooth surface  of general type of  irregularity $q\ge 2$ such that $\albdim S=2$. Recall (cf. \cite{beauville-annulation}, \S 3)  that the {\em paracanonical system} $\{K_S\}$  of $S$ is the connected component of the Hilbert scheme of $S$  containing a canonical curve.  There is a natural morphism $c\colon \{K_S\}\to \Pic^0(S)$ defined by $[C]\mapsto \OO_S(C-K_S)$ and the fiber of $c$ over $\eta\in \Pic^0(S)$ is the linear system $|K_S+\eta|$, hence there is precisely one irreducible component ${\mathcal K}_{main}$ of $\{K_S\}$ (the so-called {\em main  paracanonical system}) that dominates $\Pic^0(S)$. By the generic vanishing theorem of Green and  Lazarsfeld, one has  $\dim |K_S+\eta|=\chi(S)-1$ for $\eta \in \Pic^0(S)$ general, and so the main paracanonical system ${\mathcal K}_{main}$  has dimension $q+\chi(S)-1=p_g(S)$. It is known (\cite[Prop.4]{beauville-annulation}) that if $q$ is even and $S$ has no irrational pencil of genus $>q/2$, then the canonical system $|K_S|$ is an irreducible component of $\{K_S\}$.

 The relationship between the fixed part of ${\mathcal K}_{main}$ and the fixed part of $\{K_S\}$ does not seem to have been studied in general. Here we relate the fixed part of ${\mathcal K}_{main}$ to the ramification locus of the Albanese map.
 
 \begin{prop}\label{prop:para}
 Let $S$ be a smooth surface  of general type of  irregularity $q\ge 2$ that has no irrational pencil of genus $>\frac q 2$ and let $C\subset S$ be an irreducible curve  with $C^2>0$ that is contained in the fixed part of the main paracanonical system ${\mathcal K}_{main}$.
 
 Then $C$ is contained in the ramification locus of the Albanese map of $S$.
 \end{prop}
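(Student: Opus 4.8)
```latex
The plan is to combine the generic vanishing theorem for the main paracanonical system with the structure of $W(C,S)$ established in Section \ref{sec:Cmoves}. First I would observe that since $C$ lies in the fixed part of $\mathcal{K}_{main}$, for \emph{every} $\eta\in \Pic^0(S)$ lying in the image $c(\mathcal{K}_{main})=\Pic^0(S)$ (more precisely, for $\eta$ general, and then by closedness), the curve $C$ is a fixed component of $|K_S+\eta|$. This means the natural restriction map $H^0(K_S+\eta)\to H^0((K_S+\eta)|_C)$ is zero for general $\eta$, equivalently $h^0(K_S+\eta-C)=h^0(K_S+\eta)$ for general $\eta\in \Pic^0(S)$. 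By Green--Lazarsfeld generic vanishing, the right-hand side equals $\chi(S)$ for general $\eta$, so $h^0(K_S-C+\eta)=\chi(S)$ for $\eta$ in a dense open subset of $\Pic^0(S)$, which forces $K_S-C+\eta\notin V^1(S)$ generically and gives the bound $h^0(K_S-C)=\chi(S)$ generically on $\Pic^0(S)$.

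Next I would translate this into a statement about $W(C,S)$. The idea is that $C$ being in the fixed part of the main paracanonical system should force $0$ to be an isolated point of $W(C,S)$, i.e.\ $h^0(C|_C)=0$, because if $C$ moved in a positive-dimensional family of the relevant Brill--Noether locus then its class would vary and it could not remain fixed in $\mathcal{K}_{main}$. Concretely, I expect to show that $h^0(C|_C)>0$ would produce, via Proposition \ref{prop:BNtangent} and the tangent space identification, a deformation of $C+\eta$ within some $|K_S+\eta'|$ that contradicts $C$ being a fixed component. Once $h^0(C|_C)=0$ is in hand, the hypotheses of Corollary \ref{lem:fixed}---or more directly of Proposition \ref{prop:ineq2}---are in force: $C$ is irreducible (hence $1$-connected), $C^2>0$, $h^0(C)=1$ (which must be checked, but follows since a curve with $h^0\ge 2$ would move and could not be a proper fixed component), and $h^0(C|_C)=0$.

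Having placed ourselves in the equality situation, I would invoke the final assertion of Proposition \ref{prop:ineq2}: when the bound $C^2+2q-4\le K_SC$ holds with equality, $C$ is contained in the ramification locus of the Albanese map. So the crux is to verify that the paracanonical fixed-part hypothesis forces precisely the equality case $\rho(C)=1$ in Proposition \ref{prop:ineq2}, rather than the strict inequality $\rho(C)<1$. I would argue this by a dimension count on $\mathcal{K}_{main}$: its dimension is $p_g(S)=q+\chi(S)-1$, and subtracting off the fixed component $C$ identifies the ``moving'' part with data governed by $h^0(K_S-C+\eta)$; matching $h^0(K_S-C)=\chi(S)$ against the Riemann--Roch expression for $h^0(K_S|_C)$ and the adjunction formula should pin down $\rho(C)=1$ exactly.

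The main obstacle I anticipate is the bookkeeping that links the \emph{global} condition ``$C$ fixed in the main paracanonical system'' to the \emph{local/infinitesimal} condition ``$\rho(C)=1$ with $h^0(C|_C)=0$.'' In particular, proving $h^0(C|_C)=0$ rigorously---ruling out that $C$ could be a fixed component while still having nonzero tangent space to $W(C,S)$---requires care, since a priori a curve can be rigid as a fixed divisor yet have infinitesimal deformations of its restricted linear system. The cleanest route may be to use the no-irrational-pencil-of-genus-$>q/2$ hypothesis together with Theorem \ref{thm:GV1} to control $V^1(S)$ and thereby force the equality case directly, so that the conclusion about the ramification locus follows from Proposition \ref{prop:ineq2} without a separate argument for $\rho(C)=1$.
```
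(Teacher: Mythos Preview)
Your approach has a genuine gap. You plan to invoke Proposition~\ref{prop:ineq2}, but that proposition (through Theorem~\ref{thm:Cmoves}) requires that $S$ have no irrational pencil of genus $>1$, whereas Proposition~\ref{prop:para} only assumes no irrational pencil of genus $>q/2$; for $q\ge 4$ these differ, and the machinery of \S\ref{sec:Cmoves} is then unavailable. Even under the stronger hypothesis, your plan fails numerically. Suppose for contradiction that $C$ is not in the ramification divisor; then Corollary~\ref{cor:CC} and semicontinuity give $h^0(K_S-C)=\chi(S)$, so $\rk r_C=q-1$. If in addition $h^0(C|_C)=0$ as you want, Riemann--Roch on $C$ gives $h^0(K_S|_C)=p_a(C)-1-C^2=q-1-\rho(C)$, and since $\rk r_C\le h^0(K_S|_C)$ you obtain $\rho(C)\le 0$. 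So you are never in the equality case $\rho(C)=1$ of Proposition~\ref{prop:ineq2}, and that proposition yields no information about the ramification locus. The route through $W(C,S)$ and $\rho(C)$ is a dead end here.

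The paper's proof is entirely different. One argues by contradiction: if $C$ is not in the Albanese ramification divisor, Corollary~\ref{cor:CC} forces $h^0(K_S-C)=\chi(S)$, hence $W^{\chi(S)-1}(K_S-C,S)=\Pic^0(S)$, and Proposition~\ref{prop:BNtangent} applied at $0$ to \emph{this} Brill--Noether locus (not to $W(C,S)$) shows that the cup product $H^1(\OO_S)\otimes H^0(K_S-C)\to H^1(K_S-C)$ vanishes identically. Consequently every section of $K_S$ vanishing on $C$ cups to zero with all of $H^1(\OO_S)$, and the Hodge-theoretic computation inside the proof of Proposition~\ref{prop:dv} then shows that no nonzero decomposable $2$-form $\alpha\wedge\beta$ with $\alpha,\beta\in H^0(\Omega^1_S)$ can vanish along $C$. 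The hypothesis ``no pencil of genus $>q/2$'' enters only at the last step, in a Grassmannian dimension count: it bounds the degeneracy locus of the wedge map inside $\mathbb G(2,H^0(\Omega^1_S))$ so that a general codimension-$(q-3)$ linear slice maps finitely to a $(q-1)$-dimensional subvariety of $|K_S|$, which must meet $C+|K_S-C|$ and produce the forbidden $\alpha\wedge\beta$.
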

 \begin{proof} By the semi-continuity of the map $\eta\mapsto h^0(K_S-C+\eta)$,  $\eta \in \Pic^0(S)$, we have $h^0(K_S-C)\ge \chi(S)$.  Assume for contradiction that $C$ is not contained in the ramification divisor of the Albanese map: then by  Corollary  \ref{cor:CC} (i) we have $h^0(K_S-C)=\chi(S)$. By Proposition \ref{prop:BNtangent} it follows that the bilinear map $H^1(\OO_S)\otimes H^0(K_S-C)\to H^1(K_S-C)$ given by cup product is zero. Hence for every section $s\in H^0(K_S)$ that vanishes along $C$  and for every $v\in H^1(\OO_S)$ we have $s\cup v=0$.
  Therefore, by the proof of Proposition \ref{prop:dv}, it follows that if  $\alpha,\beta\in H^0(\Omega^1_S)$ are such that $\alpha\wedge \beta\ne 0$, then  $\al\wedge \beta$  does not vanish along $C$. 
  
  Consider the Grassmannian $\mathbb G: = \mathbb G(2, H^0(\Omega^1_S))\subseteq \pp(\bigwedge^2H^0(\Omega^1_S))$ and the projectivization  $T\subset \pp(\bigwedge^2H^0(\Omega^1_S))$ of the kernel of $\bigwedge^2H^0(\Omega^1_S)\to H^0(K_S)$. By the Theorem \ref{thm:cast-cat}  the  intersection $T\cap \mathbb G$ is the union of a finite number of Grassmannians  $\mathbb G(2, V)\subset \pp(\bigwedge^2 V)$ where $V\subset H^0(\Omega^1_S)$ is a subspace of the form $p^*H^0(\omega_B)$ for $p\colon S\to B$ an irrational pencil of genus $>1$. Since by assumption $S$ has no irrational pencil of genus $>\frac{q}{2}$, if  $\mathbb G_0\subset \mathbb G$ is a general  codimension $q-3$ hyperplane section then  $\mathbb G_0\cap T=\emptyset$. Hence the image of $\mathbb G_0$ in $|K_S|$ is a closed subvariety $Z$ of dimension $q-1$. Hence $(C+|K_S-C|)\cap Z$ is nonempty, namely there exist $\al,\be\in H^0(\Omega^1_S)$ such that $\al\wedge \beta\ne 0$ and $\al\wedge \beta$ vanishes on $C$, a contradiction. 
 \end{proof}

\section{Examples and open questions}\label{sec:examples}
We collect here some examples to illustrate the  phenomena that one encounters in studying  the Brill-Noether loci of curves on irregular surfaces. We also give an example (Example \ref{ex:V1}) that shows that the hypothesis that $V^1(S)$ does not generate $\Pic^0(S)$ in Theorem \ref{thm:Cmoves} is not empty, i.e. that surfaces $S$ of maximal Albanese dimension without  irrational pencils of genus $>1$ such that  $V^1(S)$ generates $\Pic^0(S)$ do exist. 
We conclude  the section by  posing  some  questions.  
\begin{ex}[Symmetric products]\label{ex:sym} 
Let $C$ be a smooth curve of genus $q\ge 3$ and let $X:=S^2C$ be the second  symmetric product  of $C$. The surface $X$  is  minimal  of general type with irregularity $q$ (cf. \cite[\S 2.4]{survey} for a detailed description  of $X$). 

For any $P\in C$,  the curve    $C_P=\{P+x|x\in C\}\subset X$ is a smooth curve isomorphic to $C$, in particular  it has genus $q$. It  satisfies $C^2_P=1$, $h^0(C_P)=1$ and $\rho(C_P)=1$. 
If we fix $P_0\in C$,  then it is easy to check that the map $C\to W(C_{P_0}, X)$ defined by $P\mapsto C_P-C_{P_0}$ is an isomorphism, hence Theorem \ref{thm:Cmoves} is sharp in this case.

 Notice also that  for every $P\in C$ we have  $h^0(K_X|_{C_{P}})=q-1$, hence both Proposition \ref{prop:xiao} and Proposition \ref{prop:xiao-petri}  are sharp in this case. \end{ex}
\begin{ex}[\'Etale double covers of symmetric products]\label{ex:cover-sym}
As in Example \ref{ex:sym}, take $C$ a smooth curve of genus $q\ge 3$, let $X:=S^2C$ be the second  symmetric product and for $P\in C$ let $C_P=\{P+x|x\in C\}\subset X$. 
Let $f\colon C'\to C$ be an \'etale double cover and let $X':=S^2C'$. The involution $\si$ of $C'$ associated to the covering $C'\to C$ induces an involution $\tau$ of $X'$  defined  by $\tau(A+B)=\si(A)+\si(B)$.  The fixed locus  of $\tau$ is the smooth curve  $\Ga=\{A+\si(A)|A\in C'\}$, hence $Y:=X'/\tau$ is a smooth surface. It is easy to check that $q(Y)=q$ and that $f$ descends to a degree 2 \'etale cover $Y\to X$.

Denote by $D_P$ the inverse image of $C_P$ in $Y$. 
The map $D_P\to C_P$ is a connected  \'etale double cover, hence $D_P$ is smooth (isomorphic to $C'$)  with $D^2_P=2$ and $g(D_P)=2q-1$. In this case $\rho(D_P)=3-q\le 0$ although $D_P$ moves in a $1$-dimensional algebraic family.

Next we  study  $h^0(D_P)$. The standard restriction sequence for $D_P$ on $Y$  gives $0\to H^0(\OO_Y)\to H^0(D_P)\to H^0(D_{P}|_{D_{P}})\to H^1(\OO_Y)$ and by Proposition \ref{prop:BNtangent} the last map in the sequence is nonzero for every $P$ since $D_P$ moves algebraically. Hence if $H^0(D_P|_{D_{P}})=1$ (e.g., if $C'$ is not hyperelliptic) then $h^0(D_P)=1$.
Consider now a special case: take $C$ hyperelliptic, $A,B\in C$ two Weierstrass points and $C'\to C$ the double cover given by the $2$-torsion element $A-B$, so that the corresponding \'etale double cover $Y\to X$ is given by the equivalence relation  $2(C_A-C_B)\equiv 0$. Then  the curves $D_A$ and $D_B$ are linearly equivalent on $Y$ and we  have $h^0(D_A)= 2$. 

With a little extra work it is possible to show that this is the only instance in which $h^0(D_P)>1$.
\end{ex}

\begin{ex}[The Fano surface of the cubic threefold]                       
This  example deals with a well-known $2$-dimensional family of curves of genus $11$ on a surface
 of irregularity $q=5$. 
 
Let  $V=\{f(x)=0\}\subset \pp^4$ be a smooth  cubic $3$-fold. 
 Let  ${\mathbb G}:={\mathbb G}(2,5)$ be the Grassmannian of   lines of  $\bP^4.$
 The Fano surface (see \cite {CG} and \cite{T})  is the subset $F=F(V)\subset \bG$
of  lines contained in $V$. We recall that $F$ is a smooth surface with irregularity $q=5$.   
  In fact, let $J(V)$ be the intermediate Jacobian of $V$:   
 the Abel-Jacobi map $F\to J(V)$ induces an isomorphism 
  $\Alb(F){\rightarrow} J(V)$.
Moreover one has: 
$H^2(F,\bC)\cong \bigwedge^2 H^1(F,\bC)$, 
$K_F^2=45$, $\chi(\cO_F)=6$,  $p_g(F)=10$ and $c_2(F)=27$.

Following Fano,   for any $r\in F$ we consider the curve $C_r\subset F$
$$C_r=\{s\in F :s\cap r\neq \emptyset \}.$$
We have: 
$h^0(C_r)=1$, $C_r^2=5$,  $p_a(C_r)=11$ and 
  $K_F\sim_{alg} 3C_r$;   in addition,  the  general $C=C_r$ is smooth (see \cite{CG} and \cite{T}).

We remark that   $W(C,F)$ contains a $2$-dimensional variety isomorphic to $F$, while  
 one would expect  it  to be empty, since  $\rho(C)=-1$. On the other hand,  since the family has dimension $2$,  we have  
$h^0(\cO_C(C))=2$, hence    $W_5^1(C)\ne \emptyset$ and  $C$ is not  Brill-Noether general.  In fact the corresponding Brill-Noether number is
$\rho(11,5,1)=11-2(11-5+1)=-3$.
Moreover there is a degree $2$  \'etale map $C\to D$,  where $D$ is a smooth plane quintic, and 
$\Alb(F)$ is isomorphic to the Prym variety $P(C,D)$ of the covering, thus  the curve $C$
 has  very special moduli.
 
Nevertheless we will see that the family $\{C_r\}_{r\in F}$ has a good infinitesimal behavior.
Firstly we recall that  $h^0(K_F-C_r)=3$  by \cite[Cor. 2.2]{T}.  Let $T$ be the tangent space to $W(C_r,F)$ at $0$; since  $\dim T\ge 2$,  the image $V\subseteq H^2(\OO_F)\cong \wedge^2H^1(\OO_F)$ of $T\otimes H^1(\OO_F)$ has dimension $\ge 7$. On the other hand,  $V$ is orthogonal to the image in $H^0(K_S)$ of the Petri map $\beta_{C_r}\colon H^0(C_r)\otimes H^0(K_F-C_r)\to H^0(K_F)$, therefore  $\dim V\le p_g(F)-h^0(K_F-C_r)=7$. So we have $\dim V=7$,  $\dim T=2$ and 
 in this case the dimension of the family is predicted by the Petri map.  
\end{ex}

\begin{ex}[Ramified double covers] \label{ex:double}
Let $X$ be a smooth surface of irregularity $q$ such that the Albanese map $X\to \Alb(X)$  is an embedding  (for instance take $X$ a complete intersection in an abelian variety), and let $\pi\colon S\to X$ be the double cover given by a  relation $2L\equiv B$ with $B$ a smooth ample curve. Write $\pi^*B=2R$; the induced map $\Alb(S)\to \Alb(X)$ is an isomorphism (cf. \cite[\S 2.4]{survey}), hence $R$ is the ramification divisor of the Albanese map of $S$. We have $R\in |\pi^*L|$ and, by the projection formula for double covers, for every $\eta\in \Pic^0(S)=\Pic^0(X)$  we have $H^0(R+\eta)=H^0(L+\eta)\oplus H^0(\eta)$, where the first summand is the space of invariant sections and the second one is the space of anti-invariant sections.  Hence for $\eta\ne 0$  all sections are invariant, while  for $\eta=0$ the curve  $R$ is the zero locus of the only (up to scalars) anti-invariant section. Hence $R$ moves only linearly on $S$, as predicted by Proposition \ref{prop:dv}.
 
This construction can be used also to produce examples of surfaces of fixed irregularity $q$ that contain smooth curves $C$ with $C^2>0$, $h^0(C)=1$ and unbounded genus.  Assume that $X$ contains a  smooth curve $D$ such that $D^2>0$ and $h^0(D)=1$ (for instance,   take $X$ a symmetric product as in Example \ref{ex:sym}). Set $C:=\pi\inv(D)$: if $B$ meets $D$ transversally, then $C$ is smooth of genus $2g(D)-1+LD$, hence  $g(C)$ can be arbitrarily large. Again by the projection formulae, 
for every $\eta\in \Pic^0(S)=\Pic^0(X)$ we have $h^0(C+\eta)=h^0(D+\eta)+h^0(D+\eta-L)$.  Hence if $L-D>0$, we have   $h^0(C+\eta)=h^0(D+\eta)=1$ and $W(C,S)=W(D,X)$. 
\end{ex}

\begin{ex}\label{ex:V1}
An example of surface $S$  without pencils of genus $>1$  such that  $V^1(S)$ generates $\Pic^0(S)$ can be constructed as follows.  

Let $E$ be an elliptic curve,  let $C\to E$ and   $E'\to E$ be double covers with $C$ a curve of genus 2 and $E'$ an elliptic curve,  and set $B:=C\times_E E'$. The map $B\to E$ is a $\Z_2^2$-cover and $B$ has genus 3. We denote by $\al$ the element of the Galois group of $B\to E$  such that $B/\!\!<\al>=E'$ and by $\be,\ga$ the remaining nonzero elements. The curves $B/\!\!<\be>$ and $B/\!\!<\ga>$ have genus 2.

Now  choose  elliptic curves $E_1, E_2,E_3$ and for $i=1, 2,3$ let $B_i\to E_i$ and  $\al_i,\be_i, \ga_i$ be as above. Let $X:=B_1\times B_2\times B_3$ and   let $G$ be  the  subgroup of $\Aut(X)$ generated by $g_1=(\al_1,\be_2,\ga_3)$ and $g_2=(\be_1,\ga_2,\al_3)$;  note that $G$ acts freely on $X$.  Let $S'\subset X$ be a smooth ample divisor which is invariant under the $G$-action and let $S:=S'/G$; we denote by $f_i\colon S\to E_i$ the induced map, $i=1,2,3$. The surfaces $S'$ and $S$ are minimal of general type. 
By the  Lefschetz Theorem, $\Alb(S')=\Alb(X)=J(B_1)\times J(B_2)\times J(B_3)$. It is immediate to check that $q(S)=3$ and that  the map $S\to A= E_1\times E_2\times E_3$ induces an isogeny $\Alb(S)\to A$. 
Consider the \'etale cover $S_1=S'/\!\!<g_1>\to S$; the map $S'\to B_1$ induces a map $S_1\to E'_1=B_1/\!\!<\al_1>$  which  is equivariant for the action of $G/\!\!<g_1>$. The group  $G/\!\!<g_1>$ acts freely on $E'_1$,   hence the cover $S_1\to S$ is obtained from $E'_1\to E_1$ by base change. It follows that the   $2$-torsion element $\eta_1\in \Pic(S)$ associated to this  double cover is a  pull back from $E_1$. Thus $\eta_1$ belongs  to $\Pic^0(S)$. 

The map $S'\to B_2$ induces a fibration  $S_1\to C_2=B_2/<\be_2>$. There is a commutative diagram
\[
\begin{CD}
S_1@>>>S\\
@VVV@VV{f_2}V\\
C_2@>>>E_2,
\end{CD}
\]
where the map $S_1\to S$ is obtained from $C_2\to E_2$ by base change and normalization. This means that the fibration $S_1\to C_2$ has two double fibres $2F_1$ and $2F_2$, occurring at the ramification points of $f_2$  and that $\eta_1=F_1-F_2+\alpha$ for some $\alpha\in \Pic^0(E_2)$, hence  $\eta_1$ restricts to $0$ on the general fiber of $f_2$. 
By \cite[Thm.2.2]{beauville-bayreuth}, this implies that $\eta_1+f_2^*\Pic^0(E_2)$ is a component of $V^1(S)$.
A similar argument shows that $V^1(S)$ contains translates of $f_1^*\Pic^0(E_1)$ and $f_3^*\Pic^0(E_3)$. Since $\Alb(S)$ is isogenous to $E_1\times E_2\times E_3$, it follows that $V^1(S)$ generates $\Pic^0(S)$.

To conclude, we show that if
 the curves $E_i$ are general, then $S$ has no irrational pencil of genus $>1$.
 In this case $\Hom(E_i,E_j)=0$ if $i\ne j$, hence $\End(A)=\Z^3$. Assume for contradiction that $S\to B$ is an irrational pencil of genus $b>1$. Since the map $S\to A$ is generically finite by construction and $q(S)=3$, $b=2$ is the only possibility. 
  Then we have a map with finite kernel  $J(B)\to A$. Let $W$ be the image of $J(B)$ in $A$. Consider the endomorphism $\phi$ of $A$ defined as  $ A\to A/W=(A/W)^{\vee}\to  A^{\vee}=A$ (both $A$ and $A/W$ are principally polarized). The connected component of $0\in \ker \phi$ is $W$, hence $W$ is a product  of two of the  $E_i$. So the map $S\to W$  has finite fibres, while $S\to J(B)$ is composed with a pencil, and  we have a contradiction.
 \end{ex}
 \begin{qst}\label{qst:Q1} 
 Let $S$ be a surface of general type  of irregularity $q$ and of maximal Albanese dimension.  A curve $C\subset S$ with $C^2>0$ satisfies $p_a(C)\ge q$ and in \cite{symm} we have proven   that if $S$ contains a  $1$-connected curve $C$ with $C^2>0$ and $p_a(C)=q$, then  $S$  is birationally a product of curves or the  symmetric square of a curve. 
 On the other hand, the curves $D_P$ in Example \ref{ex:cover-sym} have $D^2_P>0$ and genus equal to $2q-1$.  We do not know any surface  $S$ containing a curve $C$ with $C^2>0$ and   arithmetic genus in the intermediate range $q<p_a(C)<2q-1$,  so it is natural to ask whether such an example exists. Notice that,  by   Corollary \ref{cor:genere} (cf. also \cite{xiaoirreg}),   a curve  $C$ with  $C^2> 0$ and $p_a(C)<2q-1$ cannot move linearly and that some  further restrictions  are given in Corollary \ref{cor:q+1}.   In addition, the image $C'$ of  such a curve $C$ via the Albanese map  generates $\Alb(C)$ and therefore by the Hurwitz formula $C'$  is birational to $C$. Hence this question is also related to the question of existence of curves of genus $q< p_a(C)<2q-1$ that generate an abelian variety of dimension $q$ (see \cite{pi}   for related questions).  
  \end{qst}

 \begin{qst} On a variety  $X$ with $\albdim X=\dim X $ there are three intrinsically defined effective divisors:

 \begin{itemize}
 \item[(a)] the fixed part of $|K_X|$;
 \item[(b)] the ramification divisor $R$ of the Albanese map;
 \item[(c)] the fixed part of the main paracanonical system ${\mathcal K}_{main}$ (cf. \S \ref{ssec:para}).
 \end{itemize}
 Clearly the fixed part of $|K_X|$ is a subdivisor of  $R$.  In the case of surfaces,  in Proposition \ref{prop:para} it is shown that the components $C$ of the fixed part of ${\mathcal K}_{main}$ with $C^2>0$ are contained in $R$ if the surface has no irrational pencil of genus $>\frac q 2$. 
 
  It would be interesting to know more precisely  in arbitrary dimension how these three divisors are related.\footnote{In our recent preprint \cite{para}, we have proven by the different methods that for surfaces without irrational pencils of genus $>\frac q 2$ the fixed part of the main paracanonical system is contained in the fixed part of the canonical system.}
   \end{qst}
  \begin{qst} 
In  \S \ref{sec:Cmoves} we study the Brill-Noether loci for \underline{curves} $C$ on a surface $S$, namely we always assume that $0\in W(C,S)\ne \emptyset$. It would be very interesting to find numerical conditions on a line bundle $L\in\Pic(X)$,  $X$  a smooth projective variety, that ensure that $W(L,X)$ is not empty.

\end{qst}

%%%%%%%%%%%%%%%

\bigskip

\begin{minipage}{13.0cm}
\parbox[t]{6.5cm}{Margarida Mendes Lopes\\
Departamento de  Matem\'atica\\
Instituto Superior T\'ecnico\\
Universidade T{\'e}cnica de Lisboa\\
Av.~Rovisco Pais\\
1049-001 Lisboa, PORTUGAL\\
mmlopes@math.ist.utl.pt
 } \hfill
\parbox[t]{5.5cm}{Rita Pardini\\
Dipartimento di Matematica\\
Universit\`a di Pisa\\
Largo B. Pontecorvo, 5\\
56127 Pisa, Italy\\
pardini@dm.unipi.it}

\vskip1.0truecm

\parbox[t]{5.5cm}{Gian Pietro Pirola\\
Dipartimento di Matematica\\
Universit\`a di Pavia\\
Via Ferrata, 1 \\
 27100 Pavia, Italy\\
\email{gianpietro.pirola@unipv.it}}
\end{minipage}


\begin{thebibliography}{ABCD}
 
  \bibitem[Al]{A}  V. ~Alexeev,  
{\em Compactified Jacobians and Torelli map}, Publ. Res. Inst. Math. Sci. {\bf 40} (2004), 1241--1265.
       \bibitem[ACG]{acg} E.~Arbarello, M.~Cornalba,  P.A~Griffiths,  {\em Geometry of algebraic curves}, Vol. II. With a contribution by Joseph Daniel Harris. Grundlehren der Mathematischen Wissenschaften, 268, Springer, Heidelberg, 2011. 
  \bibitem[ACGH]{acgh} E.~Arbarello, M.~Cornalba,  P.A~Griffiths, J.~Harris,  {\em Geometry of algebraic curves},  Vol. I. Grundlehren der Mathematischen Wissenschaften, 267, Springer-Verlag, New York,  1985.
\bibitem[Be1]{beauville} A.~Beauville, {\em L'application canonique pour les surfaces de type g\'en\'eral}, Invent. Math. {\bf 55} (1979), 121--140.
\bibitem [Be2]{beauville-annulation}  A.~Beauville, {\em Annulation du $H^1$ et syst\`emes paracanoniques sur les surfaces}, J. reine angew. Math {\bf 388} (1988), 149--157.
\bibitem [Be3]{beauville-errata}  A.~Beauville, Corrections to \cite{beauville-annulation}, J. reine angew. Math {\bf 418} (1991), 219--220.
\bibitem[Be4]{beauville-bayreuth} A.~Beauville, {\em L'application canonique pour les fibr\'es en droites plats}, Complex algebraic varieties (Proc. Bayreuth 1990), LNM 1507 (1992), 1--15. 
\bibitem[BLR]{neron}S.~Bosch, W.~L\"utkebohmert, M.~Raynaud, {\em N\'eron models}, Ergebnisse der Mathematik und ihrer Grenzgebiete (3) Springer-Verlag, Berlin, 1990.
\bibitem[Cap]{C} L.~Caporaso, {\em Compactified Jacobians, Abel maps and theta divisors}, Curves and abelian varieties, Contemp. Math., 465, Amer. Math. Soc.  RI, (2008), 1--23 . 
\bibitem[Cat]{catanese-irregular} F.~Catanese, {\em Moduli and classification of irregular Kaehler manifolds (and algebraic varieties) with Albanese general type fibrations}, Invent. Math. {\bf 104} (1991), 263--289.
\bibitem[CFM]{cfm} C.~Ciliberto, P.~Francia,  M.~Mendes Lopes, 
{\em Remarks on the bicanonical map for surfaces of general type}, 
Math. Z. {\bf 224} (1997), no. 1, 137--166.
  \bibitem[CG]{CG}
C.H.~Clemens, P.A.~Griffiths,
{\em The intermediate Jacobian of the cubic threefold}, Ann. of
Math.  {\bf 95} (1972), 281--356. 

\bibitem[CM]{costa-miro} L.~Costa, R.M.~Mir\'o-Roig, {\em Brill-Noether theory for moduli spaces of sheaves on algebraic varieties}, Forum Math.  {\bf 22}  no. 3 (2010),  411--432.
\bibitem[DL]{dl}  J.-M. Drezet, J. Le Potier, {\em Fibr\'es stables et fibr\'es exceptionnels sur $\pp^2$}, Ann. Sci. \'Ecole Norm.
Sup. (4)  {\bf 18} (1985), no. 2, 193--243.
\bibitem[DPS]{dps} A.~Dimca, S.~Papadima, A.I.~ Suciu, {\em Topology and geometry of cohomology jump loci},  Duke Math. J. {\bf 148} (3)  (2009), 405--457.
\bibitem[EGAIII2]{ega32} A.~Grothendieck, {\em \'El\'ements de g\'eom\'etrie alg\'ebrique. III. \'Etude cohomologique des faisceaux coh\'erents. II}, Inst. Hautes \'Etudes Sci. Pub. Math. (1963), no. 17.
 
 
 \bibitem[EH]{EH} D.~Eisenbud, J.~Harris, {\em Limit linear series: basic theory}, Invent. Math. {\bf 90} (1987), 389--407.
 
 \bibitem[EK]{EK}E.~ Esteves, S.L.~Kleiman, {\em  The compactified Picard scheme of the compactified Jacobian}, Adv. Math. {\bf 198}  no. 2 (2005), 484--503. 
 
 \bibitem[FL]{FL} W.~Fulton, R.~Lazarsfeld, {\em On the connectedness of degeneracy loci and special divisors}, Acta Math. {\bf 146}  no. 3-4 (1981), 271--283.

\bibitem[Gi]{G} D.~Gieseker, {\em Stable curves and special divisors: Petri's conjecture}, Invent. Math. {\bf 66} (1982), 251--275.

\bibitem [GL1]{GL1}
M.~Green, R.~Lazarsfeld, 
{\em Deformation theory, generic vanishing theorems and some
conjectures of Enriques, Catanese and Beauville}, Invent. Math. {\bf 85} (1986), 337--371.

\bibitem [GL2]{GL2}
M.~Green, R.~Lazarsfeld, 
{\em Higher obstructions to deforming cohomology groups of line bundles}, J. Amer. 
Math. Soc. {\bf 4} (1991), 87--103.
\bibitem[GT]{teixidor} I.~Grzegorczyk, M.~Teixidor i Bigas, 
{\em Brill-Noether theory for stable vector bundles},  Moduli spaces and vector bundles, 29Ð50,
London Math. Soc. Lecture Note Ser., {\bf 359}, Cambridge Univ. Press, Cambridge (2009).

\bibitem [H]{h}
A. Hirschowitz 
{\em  Probl\`emes de Brill-Noether en rang sup\'erieur}, C. R. Acad. Sci. Paris
S\'er. I Math.{\bf 307}, no. 4 (1988), 153--156.

\bibitem[Kl1]{kleiman-BN} S.L.~Kleiman, {\em $r$-Special subschemes and an argument of Severi's}, Adv. in Math. {\bf 22} (1976), 1--33.
\bibitem[Kl2]{kleiman_duality} S.L.~Kleiman, {\em Relative duality for quasi-coherent sheaves}, Compositio Math. {\bf  41} no. 1 (1980), 39--60.
\bibitem[La]{lazarsfeld} R.~Lazarsfeld,  {\em Positivity in Algebraic Geometry} I-II, Ergebnisse der Mathematik (3) {\bf 48} \&  {\bf 49}, Springer-Verlag, Berlin,
2004.
\bibitem[Le]{le} J.~Le Potier,  {\em Probl\'eme de Brill-Noether et groupe de Picard de l'espace de
modules des faisceaux semi-stables sur le plan projectif}  in Algebraic geometry (Catania, 1993/Barcelona,
1994), 75--90, Lecture Notes in Pure and Appl. Math., 200, Dekker, New York,  
1998.
\bibitem[MP]{survey} M.~Mendes Lopes, R.~Pardini, {\em The geography of irregular surfaces}, Current developments in algebraic geometry,  Math. Sci. Res. Inst. Publ. {\bf 59}, Cambridge Univ. Press (2012), 349--378. 

\bibitem[MPP1]{symm} M. ~Mendes Lopes, R.~Pardini, G.P.~Pirola, {\em A characterization of the symmetric square of a curve}, Int. Math. Res. Notices (2011). doi: 10.1093/imrn/rnr025.

\bibitem[MPP2]{para} M. ~Mendes Lopes, R.~Pardini, G.P.~Pirola, {\em Continuous families of divisors, paracanonical systems and a new inequality for varieties of maximal Albanese dimension}, (2012).  arXiv:1207.4516.
\bibitem[Mu]{mumford-abelian}D.~Mumford,  {\em Abelian varieties}, Tata Institute of Fundamental Research Studies in Mathematics, 5, Hindustan Book Agency, New Delhi  (2008).

\bibitem[Pi]{pi} G.P.~Pirola, {\em Abel-Jacobi invariant and curves on generic abelian varieties},  Abelian varieties (Egloffstein, 1993), 237--249, de Gruyter, Berlin, 1995.
\bibitem[Ra]{ran} Z.~Ran, {\em On subvarieties of abelian varieties}, Invent. math. {\bf 62} (1981), 459--479.
 \bibitem[Si]{Si}
C.~Simpson, 
{\em Subspaces of moduli spaces of rank one local systems},
Ann. Sci. \'Ecole Norm. Sup. (4) {\bf 26} (1993), 361--401.
\bibitem[Ty]{T} A.N.~Tyurin,
{\em The Geometry of the Fano surface of a non singular cubic threefold $F\subset \bP ^4$ and Torelli Theorems for Fano
surfaces and cubics}, Izv. Akad. Nauk USSR Ser. Mat. {\bf 35} (1971),  498--529. 


\bibitem[Xi]{xiaoirreg} G.~Xiao, {\it Irregularity of surfaces with a linear pencil}, Duke Math. J. {\bf 55} 3 (1987), 596--602.


  
 
 



\end{thebibliography}
\end{document}